\numberwithin{equation}{section}
\newcommand\blfootnote[1]{%
  \begingroup
  \renewcommand\thefootnote{}\footnote{#1}%
  \addtocounter{footnote}{-1}%
  \endgroup}
\newcommand{\NN}{\mathbb{N}}
\newcommand{\RR}{\mathbb{R}}
\newcommand{\EE}{\varepsilon}
\newcommand{\DD}{\textnormal{D}}
\newcommand{\Div}{\textnormal{div}}
\newcommand{\cur}{\textnormal{curl }}
\newcommand{\supp}{\textnormal{supp }}
\newcommand{\Lip}{\textnormal{Lip}}
\newcommand{\loc}{\textnormal{loc}}
\newcommand{\Curl}{\textnormal{curl}}
\newtheorem{Theo}{Theorem}[section]
\newtheorem{lem}[Theo]{Lemma}
\newtheorem{prop}[Theo]{Proposition}
\theoremstyle{plain}
\theoremstyle{definition}
\newtheorem{defi}[Theo]{Definition}
\theoremstyle{remark}
\newtheorem{Rema}[Theo]{Remark}
\newtheorem*{rema*}{Remark}
\author[Y. Maafa]{Youssouf Maafa}
\address{LEDPA, Universit\'e de Batna --2--\\ Facult\'e des Math\'ematiques et Informatique\\ D\'epartement de Math\'ematiques\\ 05000 Batna Alg\'erie}
\email{y.maafa@univ-batna2.dz}
\author[M. Zerguine]{Mohamed Zerguine}
\address{LEDPA, Universit\'e de Batna --2--\\ Facult\'e des Math\'ematiques et Informatique\\ D\'epartement de Math\'ematiques\\ 05000 Batna Alg\'erie}
\email{m.zerguine@univ-batna2.dz}
\date{}
\begin{document}

\title[Axisymmetric MHD]{On the vanishing viscosity limit for the full viscous MHD system with critical axisymmetric initial data}
\maketitle
\begin{abstract} 
The current paper establishes the global well-posedness issue for the full viscous MHD equations in the axisymmetric setting. Global solutions are obtained in critical Besov spaces uniformly to the viscosity when the resistivity is fixed in the spirit of \cite{AKH,Hassainia,hz0}. Furthermore, strong convergence in the resolution spaces with a rate of convergence is also studied.

\end{abstract}
\noindent 
\blfootnote{{\it keywords and phrases:}
Full viscous MHD system, global well-posedness, axisymmetric solutions, critical Besov spaces, paradifferential calculus, inviscid limit.}  
\blfootnote{2020 MSC: 76W05, 76D05, 35B33, 35Q35, 35B07.}
\tableofcontents
\section{Introduction} 
Magnetohydrodynamics (MHD) is the branch of continuum mechanics which deals with the interaction of electromagnetic
fields and conducting fluids. The modeling consists of a coupling between the equations of continuum fluid mechanics and the Maxwell equations of electromagnetism. This coupled phenomenon governs by the following set equations:  
\begin{equation}\label{MHD(mu-kappa)}
\left\{ \begin{array}{ll} 
\partial_{t} v_{\mu}+v_{\mu}\cdot\nabla v_{\mu}-\mu\Delta v_{\mu}+\nabla p_{\mu}=B_{\mu}\cdot\nabla B_{\mu}\quad
 & \textrm{if $(t,x)\in\RR_+\times\RR^3$,}\\
\partial_{t}B_{\mu}+v_{\mu}\cdot\nabla B_{\mu}-\kappa\Delta B_{\mu}=B_{\mu}\cdot\nabla v_{\mu} & \textrm{if $(t,x)\in\RR_+\times\RR^3$,}\\
\Div v_{\mu}=0,\quad\Div B_{\mu}=0, &\\ 
({v}_{\mu},{B}_{\mu})_{| t=0}=({v}^{\mu}_0,{B}^{\mu}_0). \tag{MHD$_{\mu,\kappa}$}
\end{array} \right.
\end{equation} 
Here, $v(t,x)\in\RR^3$ refers to the velocity vector field of the fluid localized in $x\in\RR^3$ at time $t>0$ and $B(t,x)\in\RR^3$ designate the intensity of the magnetic field, which both are assumed to be solenoidal. The scalar quantity $p(t,x)\in\RR$ is the force of the internal pressure, and it may be determined in terms of $v$ and $B$ using the Calder\'on-Zygmund transform.
\begin{equation*}
p\equiv-\sum_{i,j=1}^{2}\mathfrak{R}_i\mathfrak{R}_j(v^iv^j)+\sum_{i=1}^{2}\mathfrak{R}_i\mathfrak{R}_j(B^i B^j)\triangleq p_v+p_B,
\end{equation*}
where $\mathfrak{R}_i=\frac{\partial_i}{\sqrt{-\Delta}}$ stands to Riesz's operator. The positive parameters $\mu$ and $\kappa$ represent the viscosity and resistivity of the fluid, respectively.

\hspace{0.5cm}Mathematical modeling of the Cauchy problem associated with the magnetohydrodynamic (MHD) has been a subject of great interest during the last few decades since the seminal work of Alfv\'en \cite{Alfven}, due to the many practical applications as a scientific endeavor of great importance in various contexts: geomagnetism and planetary magnetism, astrophysics, nuclear fusion (plasma) physics, and liquid metal technology. For extensive literature about this subject, we refer to \cite{Biskamp}.

\hspace{0.5cm}To derive MHD's governing equations, consider an electrical conductivity fluid; and assume that magnetic fields are prevalent. The electrical conductivity and the prevalence of magnetic fields provide two effects. First, electric currents are generated by the motion of the electrically conducting fluid across the magnetic lines of force. As a result, the associated magnetic fields contribute to changes in the existing fields. Second, the fluid elements carrying current traverse magnetic lines of force contribute to additional forces acting on the fluid particle. This twofold interaction between the motions and the fields is responsible for patterns of behavior that are often unexpected and striking. This interaction gives us the well-known Maxwell's equations,
\begin{eqnarray*}
\textrm{(Gauss)}&\qquad&\Div E=\frac{\rho_c}{\epsilon_0},\quad \Div B=0, \\
\textrm{(Maxwell-Faraday)}&\qquad&\Curl E=-\mu\partial_t B, \\
\textrm{(Maxwell-Amp\`ere)}&\qquad&\Curl B=\mu_0 J+\EE_0\mu_0\partial_t E.
\end{eqnarray*}

In electromagnetic units, $\rho_c$ is the charge density, $J$ is the electric current density, $\EE_0$ the permittivity of free space, and $\mu_0$ the magnetic permeability of free space. We point out that Bellac and L\'evy-Leblond was shown in \cite{Bellac and Levy-Leblond} that Maxwell's equations possess two distinct, still utterly coherent, non-relativistic limits. The two limits correspond to situations where either $|E|\gg c| B|$ (electric limit) or $|E|\ll c|B|$ (magnetic limit). Each of the two limits is Galilei covariant, albeit the transformations of the fields and the density, and current are not the same in the two cases. The electric limit amounts to disregard the time-derivative of the magnetic field in Maxwell-Faraday's law of induction. In contrast, the magnetic limit obtains by neglecting the displacement current in Maxwell-Amp\`ere's equation. Physically, the magnetic limit means that the electromagnetic field is instantaneously at equilibrium, and we can ignore the propagation of information by electromagnetic waves. As a result, Maxwell-Faraday's formula in the magnetic limit leads to $\Div J=0$, so the electric current lines are closed (like the magnetic field lines $\Div B=0$).

\hspace{0.5cm}Visibly the above system is not yet closed. For this purpose, we need to know the expression for the electric current field $J$. On account of the nature of the fluid provided by Ohm's law
\begin{equation*}
J=\sigma(E+\mu v\times B).
\end{equation*}
In the framework of the magnetic limit, by combining the previous equations, we shall obtain after an elementary calculus the second equation in the system \eqref{MHD(mu-kappa)}. When a motion of a conducting fluid induces currents through a magnetic field, a Lorentz force will act on the fluid and modify its motion. This force is given by
\begin{equation*}
\mathcal{L}=\kappa J\times B=\kappa \Curl(B\times B).
\end{equation*}
With some vectorial identity, the force $\mathcal{L}$ can be decomposed in the following way
\begin{equation*}
\mathcal{L}=-\frac12\nabla|B|^2+(B\cdot\nabla)B.
\end{equation*}
The last formula says that the magnetic force density has two parts: the first one occurs in the first equation of \eqref{MHD(mu-kappa)} and is known as the magnetic pressure $\frac12|B|^2$ orthogonal to $B$, and second, an orthogonal curvature force $(B\cdot\nabla)B$. This curvature force acts toward the line's center of curvature and is the magnetic-field-line analog of the force that operates on a curved wire or string under tension. For further information about the derivation of the MHD equations and some dynamical aspects, see \cite{Chandrasekhar}.   
 
\hspace{0.5cm}In the absence of the magnetic field ($B\equiv0$), the system \eqref{MHD(mu-kappa)} boils down to the classical incompressible Navier-Stokes equations denoted by \eqref{NS(mu)} and reads as follows.
\begin{equation}\label{NS(mu)}
\left\{ \begin{array}{ll} 
\partial_{t}v+v\cdot\nabla v+\nabla p-\mu\Delta v=0
 & \textrm{if $(t,x)\in \RR_+\times\RR^3$,}\\ 
\Div v=0, &\\ 
{v}_{| t=0}={v}_0. \tag{NS$_{\mu}$}
\end{array} \right.
\end{equation} 
\hspace{0.5cm}When the viscous forces vanish ($\mu=0$), we still are finding the incompressible Euler equations denoted by \eqref{E} and governed by the system

\begin{equation}\label{E}
\left\{ \begin{array}{ll} 
\partial_{t}v+v\cdot\nabla v+\nabla p=0
 & \textrm{if $(t,x)\in \RR_+\times\RR^3$,}\\ 
\Div v=0, &\\ 
{v}_{| t=0}={v}_0. \tag{E}
\end{array} \right.
\end{equation} 

\hspace{0.5cm}The investigation of the local/global well-posedness issue began early with the celebrated work of Lichtenstein \cite{Lichtenstein} and Gunther \cite{Gunther}, solving the Euler equations locally in time in H\"older's spaces $C^{k,\alpha}$ with $k\in\NN$ and $\alpha\in(0,1)$. The globalization of this solution for 2D Euler in H\"older spaces remounts to Wolibner \cite{Wolibner}. In the same direction, Ebin and Marsden \cite{Ebin-Marsden} explored the manifold structure of certain groups of diffeomorphisms with Sobolev spaces $H^s(\RR^N), s>\frac{N}{2}+1$ to assert that Euler's equations are well-posed. This result was later extend by Bourguignon and Br\'ezis \cite{Bouguignon-Brezis} for $W^{s,p}$ spaces with $s>\frac{N}{p}+1$. In the $\RR^N$ framework, Kato established that Euler's equations admits a unique local solution, for initial data $v_0\in H^m(\RR^N)$ with $m$ is an integer be such that $m>\frac{N}{2}+1$ and $v\in C\big([0,T^\star);H^m(\RR^N)\big)$, where $T^\star$ is the maximal lifespan of the solution which satisfies:

\begin{equation*}
\limsup_{t\uparrow T^\star}\int_{0}^{t}\|\nabla v(\tau)\|_{L^\infty}d\tau=\infty,
\end{equation*} 
Next, Kato and Ponce \cite{Kato-Ponce} enhanced this result for $W^{s,p}$ with $s$ is a real number, $s>1+\frac{N}{p}$, taking into account the following commutator estimate: 
\begin{equation}\label{V-law}
\|\Lambda^s(fg)-f\Lambda^s g\|_{L^p}\le C\|\nabla f\|_{L^\infty}\|\Lambda^{s-1}g\|_{L^p}+\|\Lambda^s f\|_{L^p}\|g\|_{L^\infty},
\end{equation}
where, $\Lambda^s$ refers to the nonlocal operator $(\mathbb{I}-\Delta)^{s/2}$ with $s>0$ and $C=C(N,s,p)$. We mention that, $p$ belongs only to $(1,\infty)$. For $p=\infty$, the problem contributes some hardness, so, it has conjectured subsequently in \cite{Grafakos-Maldonado-Naibo}. Some new Kato-Ponce type inequality developed latterly in a series of papers, e.g \cite{Bourgain-Li-1}. Also, the regularity of the solutions of Euler equations has a close link with the vorticity dynamics. The vorticity variable is denoted by $\omega=\cur v$ and defined as a skew-matrix with entries: 
\begin{equation*}
\omega_{i,j}=\partial_j v^i-\partial_i v^j,\; 1\le i,j\le N.
\end{equation*}
A blow-up vorticity criterion for Kato's solutions following Beale-Kato-Majda \cite{Beale-Kato-Majda} (short BKM) reads as follows: if $T^\star$ is the maximal lifespan time, then we have:\begin{equation}\label{Eq:3}
\limsup_{t\uparrow T^\star}\int_{0}^{t}\|\omega(\tau)\|_{L^\infty}d\tau=\infty.
\end{equation}
In particular, for $N=2$, the vorticity can be identified as a scalar function of the type $\omega=\partial_2 v^1-\partial_1 v^1$ which evolves the following nonlinear transport equation:
\begin{equation}\label{Eq:4}
\partial_{t}\omega+v\cdot\nabla\omega=0,
\end{equation}
which permits us to recover the velocity via Biot-Savart law in the following way:
\begin{equation*}
v(t,x)=\frac{1}{2\pi}\int_{\RR^2}\frac{(x-y)^\perp}{|x-y|^2}\omega(t,x)dx, \quad x^\perp=(-x_2,x_1).
\end{equation*}
This shows that Euler equations have a Hamiltonian structure and, in turn, provides an infinity of conservation laws as $\|\omega(t)\|_{L^p}=\|\omega_0\|_{L^p}$ for all $p\in[1,\infty]$. Ergo, in light of \eqref{Eq:3} the Kato's solutions are globally well-posed in time.

\hspace{0.5cm}The aforementioned functional spaces like Sobolev spaces $W^{s,p}$ with $s\in\RR, s>1+\frac{N}{p}$ are sometimes called sub-critical space for D-dimensional Euler and the space $W^{1+\frac{N}{p},p}$ is called critical. The criticality index $s_{N,p}=1+\frac{N}{p}$ is the minimal prerequisite in some sense to achieve the energy estimate. We underline that for these types of spaces, we do not know whether the BKM criterion works or not. For this purpose, Vishisk \cite{Vishik} developed a new result about global well-posedness for 2D Euler equations in the critical Besov spaces $B^{1+1/p}_{p,1}(\RR^2)$, suggesting a new criterion of continuation of solutions:
\begin{equation}\label{V-log}
\|f\circ\Psi\|_{B^0_{\infty,1}}\le C\big(1+\log\big(\|\Psi\|_{\Lip}\|\Psi^{-1}\|_{\Lip}\big)\big)\|f\|_{B^0_{\infty,1}},
\end{equation}
where, $\Psi:\RR^2\rightarrow\RR^2$ is a diffeomorphism which preserves Lebesgue's measure. This result was extended later by T. Hmidi and S. Keraani \cite{Hmd-Ker} for 2D Navier-Stokes equations, where they used the Lagrangian coordinates to establish that the velocity is a Lipschitz function globally in time and bounded uniformly in viscosity. Moreover, they investigated an inviscid limit result between the Navier-Stokes equations and the Euler one. For 3D Euler's equations, the vorticity is a vectorial function defined by $\omega=\nabla\wedge v$ and satisfies the following system:\begin{equation*}
\partial_t\omega+(v\cdot\nabla)\omega-(\omega\cdot\nabla) v=0.
\end{equation*}
The stretching term $(\omega\cdot\nabla) v$ is the main difficulty that affects the motion of the fluid and, so we don't reach the global regularity of Euler's equations. In contrast, some partial results exist in the framework of so-called axisymmetric flows without swirl. An axisymmetric solution without swirl of Euler's equations, meaning that the solution can be split in the cylindrical coordinates $(r,\theta,z)$ as follows:
\begin{equation*}
v(t, x)=v^{r}(t, r, z)\vec{e}_{r}+v^{z}(t, r, z)\vec{e}_{z},
\end{equation*}
where for every $x=(x_1,x_2,z)\in\RR^3$ we have
\begin{equation*}
x_1=r\cos\theta,\quad x_2=r\sin\theta,\quad r\ge0,\quad 0\le\theta<2\pi.
\end{equation*}
Here, the triplet  $(\vec{e}_{r}, \vec{e}_{\theta}, \vec{e}_{z})$ refers to the usual frame of unit vectors in the radial, azimuthal and vertical directions with the notation:
\begin{equation}\label{(1.7)}
\vec{e}_r=\Big(\frac{x_1}{r},\frac{x_2}{r},0\Big),\quad \vec{e}_{\theta}=\Big(-\frac{x_2}{r},\frac{x_1}{r},0\Big),\quad \vec{e}_{z}=(0,0,1).
\end{equation}
For these type of flows the vorticity $\omega$ 
takes the form $\omega\triangleq\omega_{\theta}\vec{e}_{\theta}$ 
with 
\begin{equation}\label{component(theta-B)}
\omega_{\theta}=\partial_{z}v^r-\partial_{r}v^z.
\end{equation}
We examine that the stretching term $\omega\cdot\nabla v$  
close to $\frac{v^r}{r}\omega_{\theta}$. So, by taking $\Omega=\frac{\omega_{\theta}}{r}$, we check by a straightforward calculus that
 
\begin{equation}\label{omega-E}
\left\{\begin{array}{ll}
\partial_{t}\Omega+v\cdot\nabla \Omega=0, & \\
\Omega_{|t=0}=\Omega_0.
\end{array}
\right. 
\end{equation}
One of the main interests of the axisymmetric geometry is that $\Omega$ is transported through the time by the flow in the sense that $\|\Omega(t)\|_{L^p}=\|\Omega_0\|_{L^p}$ for $t\ge0$ like 2D Euler's equations. Ukhovskii and Yudovich \cite{Ukhovskii-Yudovich} explore this identity to elaborate the global existence and uniqueness for initial data $v_0\in H^s$ with $s>\frac72$.  
This assumption was later relaxed to $\frac{\omega_0}{r}\in L^{3,1}$ by Shirota and Yanagisawa \cite{Sh-Ya}, P. Serfati \cite{Serfati-1} and R. Danchin \cite{Danchin}. We will not be discussing singularity formation for 3D Euler equations in any detail, but we advise the reader to consult \cite{Elgindi}. We point out that their proofs are firmly based on the BKM criterion. For critical Besov regularities ${B}_{p, 1}^{1+\frac{3}{p}},$ with $1\le p\le \infty$, Abidi, Hmidi and Keraani \cite{Abidi-Hmidi-Keraani} succeed to gain the global well-posedness in the absence of BKM criterion. They explored vorticity's special geometric structure, leading to a new decomposition of the vorticity. This idea helps to derive the Lipschitz norm of the velocity. The Navier-Stokes system \eqref{NS(mu)} is also well-explored. Worth mentioning, M. Ukhoviskii and V. Yudovich \cite{Ukhovskii-Yudovich}, independently O. Ladyzhenskaya \cite{ol} succeed to recover \eqref{NS(mu)} globally in time, whenever $v_0\in H^1$ and $\omega_0,\frac{\omega_0}{r}\in L^2\cap L^\infty$. This result was later improved in \cite{LMNP} by S. Leonardi, J. M\`alek, J. Nec\u as and M. Pokorn\'y for $v_0\in H^2$ and in \cite{a} by H. Abidi for $v_0\in H^{\frac{1}{2}}$. In the same direction, Hmidi and the second author \cite{hz0} derived the same result as in \cite{Abidi-Hmidi-Keraani} with a uniform bound of velocity for the viscosity. Furthermore, they studied the inviscid limit between the Navier-Stokes equations and Euler one as long as the viscosity is small enough in the spirit of \cite{Hmidi-Keraani} for 2D Navier-Stokes equations. 

\hspace{0.5cm}About MHD equations, a lot of fundamental mathematical investigations have been made. In the case $\mu,\kappa>0$, we would mention the seminal work of G. Duvaut and J.-L. Lions, where they constructed in \cite{Duvaut-Lions} in 2D a global Leray-Hopf weak solution, while in 3D only a local Leray-Hopf weak solution. Next, M. Sermange and R. Temam \cite{Sermange-Temam} handled with the regularity of weak solutions under the assumption that $(v, B)$ belongs to $L^{\infty}\big(0,T; H^1(\RR^3)\big)$.
By contrast, for $\mu>0$ and $\kappa=0$, Jiu and Niu \cite{Jiu-Niu} succeed to examine a local existence of solutions in 2D as soon as initial data in $H^s$, but only for integer $s\ge3$. They also settled a conditional regularity in 2D in the sense that if $B\in L^p\big(0,T;W^{2,q}\big)$, with $\frac{2}{p}+\frac{1}{q}\le3$ and $1\le p\le\frac43,\;2<q\le\infty$ then $T$ can be extended. This latter result was lately generalized by Zhou and Fan \cite{Zhou-Fan}, assuming that a condition $\nabla B\in L^1(0, T; BMO)$ suffices. In 3D, Fan and Ozawa \cite{Fan-Ozawa} suggested a similar conditional regularity result to extend the solution beyond time $T$ once $\nabla v\in L^{1}\big(0,T;L^{\infty}\big)$. The case $\mu=0$ and $\kappa>0$ explored by Kozono \cite{Kozono} where he showed that in 2D, the weak solution is global for divergence-free initial data in $L^2$. In the same way and for 3D, Fan and Ozawa \cite{Fan-Ozawa} demonstrated again that we can extend the solution beyond time $T$ as soon as $\nabla v\in L^{1}\big(0, T; L^{\infty}\big)$. The serious problem reflects on the uniqueness of the extension of a weak global solution in 2D to the classical one. Thus, the problem with full Laplacian seems critical, and its resolution will be discussed below. For $\mu=\kappa=0$, ideal MHD was explored in \cite{Schmidt, Secchi} by exploiting the commutators estimate following Kato and Ponce \cite{Kato-Ponce}. They proved that ideal MHD is well-posed $v,B\in C\big([0,T^\star);H^s\big)$ for initial data $v_0,B_0\in H^s$, with $s>\frac{N}{2}+1$. In \cite{Hmidi-MHD}, Hmidi proved that we could go beyond Kato's solutions and establish local existence and uniqueness in the framework of smooth vortex patch.



\hspace{0.5cm}For 3D axisymmetric initial data, the global well-posedness topic for \eqref{MHD(mu)} catches much attention. We restrict ourselves to some of them. When $\mu>0,\kappa=0$, Z. Lei \cite{Lei} proved under some specific geometry that there exists a unique global solution for initial data $v_0 \in H^2$ and $B_0 \in H^2$ are both
axisymmetric with $v^\theta = 0$ and $B^r_{0}= B^z_{0}=0$ and $B^\theta_0/r \in L^\infty.$ He suggested that the velocity and magnetic fields have the form
\begin{equation}\label{1.10}
 v(t,x)=v^r(t,r,z) \vec{e}_{r}+ v^z(t,r,z) \vec{e}_{z}, \quad B= B^\theta \vec{e}_{\theta}.
\end{equation}
For more details about this structure, see Section 3 below. In the same way, and for $\mu=0,\kappa>0$, Z. Hassainia \cite{Hassainia} exploited the structure \eqref{1.10} to elaborate two global well-posedness results. The first one deals with initial data in the setting of sub-critical Sobolev spaces, i.e., $(v_0,B_0)\in H^s\times H^ {s-2} $ with $s>\frac52$ and $\frac{B_{0}^{\theta}}{r}\in L^\infty$. In the second she assumed that initial data $(v_0,B_0)\in\mathscr{B}^ {1+\frac3p}_{p,1}\times \mathscr{B}^{1+\frac3p}_{p,1}$. Their proof was deeply based, in particular in the second case in the boundness of the vorticity in the Besov space $\mathscr{B}_{\infty,1}^0$ following Vishik \cite{Vishik} because the breakdown of BKM criterion. In what follows to simplify our presentation we take $\kappa=1$, therefore our system denoted \eqref{MHD(mu)} becomes,
\begin{equation}\label{MHD(mu)}
\left\{ \begin{array}{ll} 
\partial_{t}v_{\mu}+v_{\mu}\cdot\nabla v_{\mu}-\mu\Delta v_{\mu}+\nabla p_{\mu}=B_{\mu}\cdot\nabla B_{\mu}
 & \textrm{if $(t,x)\in\RR_+\times\RR^3$,}\\
\partial_{t}B_{\mu}+v_{\mu}\cdot\nabla B_{\mu}-\Delta B_{\mu}=B_{\mu}\cdot\nabla v_{\mu} & \textrm{if $(t,x)\in\RR_+\times\RR^3$,}\\
\Div v_{\mu}=0,\quad\Div B_{\mu}=0, &\\ 
({v}_{\mu},{B}_{\mu})_{| t=0}=({v}^{\mu}_0,{B}^{\mu}_0). \tag{MHD$_{\mu}$}
\end{array} \right.
\end{equation}

\subsection{Aims and the main results}The main concerns of this paper are twofold. The first part interests to conduct the same result as in \cite{Hassainia} and meanwhile obtain a uniform estimate for the viscous solutions of the system in respect to the viscosity parameter. In the second part, we investigate the inviscid limit for the system \eqref{MHD(mu)} towards the resistive one
\begin{equation}\label{MHD(0)}
\left\{ \begin{array}{ll} 
\partial_{t}v+v\cdot\nabla v+\nabla p={B}\cdot\nabla B
& \textrm{if $(t,x)\in\RR_+\times\RR^3$,}\\
\partial_{t}B+v\cdot\nabla B-\Delta B=B\cdot\nabla v & \textrm{if $(t,x)\in\RR_+\times\RR^3$,}\\ 
\Div v=0,\quad\Div B=0, &\\ 
({v},{B})_{| t=0}=({v}_0,{B}_0) \tag{MHD$_{0}$}
\end{array} \right.
\end{equation}
when the viscosity is small enough and we quantify the rate of convergence between velocities and magnetic fields.    

\hspace{0.5cm}The first main result treats essentially the global and uniqueness topic for the system \eqref{MHD(mu)} with initial data lying to some critical Besosov spaces. More precisely, we will prove the following theorem. 
\begin{Theo}[Uniform boundedness of the velocity and magnetic fields]\label{The:1}
Let $p\in[2, +\infty]$ and $(v^{\mu}_0,B^{\mu}_0)\in \big(L^2\cap \mathscr{B}_{p, 1}^{1+\frac{3}{p}}\big) \times L^2$ be an axisymmetric vector field in divergence-free with $v^{\theta}_{0}=0$ and $B^{r}_{0}=B^{z}_{0}=0$. Assume that
\begin{equation}\label{H2}
\Big(\frac{\omega^{\mu}_{\theta}(0)}{r},\frac{B^{\mu}_{\theta}(0)}{r}\Big) \in L^{3,1}\times \big(L^2\cap L^\infty\big), 
\end{equation}
where, $\omega^{\mu}_{\theta}(0)$ is the angular component of the vorticity $v^{\mu}_0$. We distinguish two cases:
\begin{enumerate}
\item[$\bullet$] {\bf Case} $p=\infty$. If $B^{\mu}_0\in\mathscr{B}_{\sigma, 1}^{-1+\frac{3}{\sigma}},\;\sigma\in[2,\infty)$. Then the system \eqref{MHD(mu)} admits a unique global solution in time $(v,B)$, so have 
\end{enumerate}
\begin{equation*}
(v,B)\in \mathscr{C}\big(\RR_+; \mathscr{B}_{\infty, 1}^{1}\big) \times \Big(\mathscr{C}\big(\RR_+;\mathscr{B}_{\sigma, 1}^{-1+\frac{3}{\sigma}}\big) \cap L^{1}_{\loc}\big(\RR_+; \mathscr{B}_{\infty, 1}^{1}\big)\Big). 
\end{equation*}
\begin{enumerate}
\item[$\bullet$] {\bf Case} $p<\infty.$ If $B^{\mu}_0\in\mathscr{B}_{p, 1}^{-1+\frac{3}{p}}$. Then the system \eqref{MHD(mu)} admits a unique global solution in time $(v,B)$, so have 
\begin{equation*}
(v,B)\in \mathscr{C}\Big(\RR_+; \mathscr{B}_{p, 1}^{1+\frac{3}{p}}\Big) \times \Big( \mathscr{C}\big(\RR_+; \mathscr{B}_{p, 1}^{-1+\frac{3}{p}}\big) \cap L^{1}_{\loc} \big(\RR_+; \mathscr{B}_{p, 1}^{1+\frac{3}{p}}\big)\Big). 
\end{equation*}
\end{enumerate}
In both cases, we have
\begin{equation*}
\Big(\frac{\omega_{\theta}}{r},\frac{B_{\theta}}{r}\Big)\in L^{\infty}_{\loc}\big(\RR_{+};L^{3,1}\big)\times L^{\infty}_{\loc}\big(\RR_{+};L^{2}\cap L^\infty \big).
\end{equation*}
Moreover, there holds 

\begin{equation*}
\Vert v(t)\Vert_{\mathscr{B}_{p, 1}^{1+\frac{3}{p}}} +\| B\|_{\widetilde{L}^\infty_t \mathcal{B}^{\frac{3}{p}-1}_{p,1}}+ \| B\|_{\widetilde{L}^1_t \mathcal{B}^{\frac{3}{p}+1}_{p,1}} +\Vert\omega(t)\Vert_{\mathscr{B}_{p, 1}^{\frac{3}{p}}} \le \Phi_6(t),
\end{equation*}
where 
\begin{equation*}
\Phi_6(t)=\triangleq  C_0  \underbrace{\exp(\ldots\exp}_{6\text{ times}}(C_0(  t^{\frac{5}{4}}))\ldots).
\end{equation*}

\end{Theo}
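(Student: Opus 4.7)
The strategy follows \cite{hz0} for axisymmetric Navier--Stokes combined with \cite{Hassainia} for axisymmetric MHD, with particular attention to dependence on $\mu$. I would first verify that the geometric ansatz $v^{\theta}\equiv 0$, $B=B_{\theta}\vec{e}_{\theta}$ is preserved by the flow, which reduces the system to scalar equations for $\Omega:=\omega_{\theta}/r$ and $\Gamma:=B_{\theta}/r$. Using $B\cdot\nabla B=-B_{\theta}^{2}/r\,\vec{e}_{r}$, a direct computation in cylindrical coordinates yields
\begin{equation*}
\partial_{t}\Gamma+v\cdot\nabla\Gamma-\mathcal{L}\Gamma=0,\qquad \partial_{t}\Omega+v\cdot\nabla\Omega-\mu\mathcal{L}\Omega=-\partial_{z}(\Gamma^{2}),
\end{equation*}
where $\mathcal{L}:=\Delta+(2/r)\partial_{r}$ is the five-dimensional axisymmetric Laplacian. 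Its semigroup is positivity-preserving and enjoys $L^{p}$- and Lorentz-smoothing estimates whose constants are independent of $\mu$. Energy conservation supplies uniform $L^{\infty}_{t}L^{2}$ control of $(v,B)$, and the maximum principle for the $\Gamma$-equation (which carries no $\mu$) immediately gives $\|\Gamma(t)\|_{L^{2}\cap L^{\infty}}\leq \|\Gamma(0)\|_{L^{2}\cap L^{\infty}}$.

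The heart of the argument is a uniform-in-$\mu$ bound on $\|\Omega(t)\|_{L^{3,1}}$. I would split $\Omega=\Omega_{1}+\Omega_{2}$: the homogeneous part $\Omega_{1}$ is controlled by the Lorentz boundedness of the transport--diffusion flow associated with $\mathcal{L}$, yielding $\|\Omega_{1}(t)\|_{L^{3,1}}\leq \|\Omega_{0}\|_{L^{3,1}}$; for the forced part $\Omega_{2}$, I would use Duhamel's formula, integrate by parts in $z$ so that the derivative is absorbed by the heat kernel at the price of $(t-s)^{-1/2}$, and estimate $\|\Gamma^{2}\|_{L^{3/2,1}}\lesssim \|\Gamma\|_{L^{2}}\|\Gamma\|_{L^{\infty}}$ via Lorentz interpolation. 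Integrability of the resulting singular kernel in time gives an $L^{3,1}$ bound of order $t^{1/2}$, independent of $\mu$. The axisymmetric Biot--Savart law then produces $v^{r}/r\in L^{\infty}_{t,\loc}L^{\infty}_{x}$ uniformly in $\mu$, whereupon the equations for $\omega_{\theta}$ and $B_{\theta}$ yield, by Gr\"onwall, $L^{\infty}$ bounds on both.

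To propagate the critical Besov regularity $v\in\mathscr{B}^{1+3/p}_{p,1}$ I would apply Vishik's logarithmic estimate \eqref{V-log} to the vorticity transport--diffusion equation (since BKM is unavailable at the critical level) together with standard paradifferential estimates for the magnetic field in the Chemin--Lerner parabolic class $\widetilde{L}^{\infty}_{t}\mathscr{B}^{3/p-1}_{p,1}\cap \widetilde{L}^{1}_{t}\mathscr{B}^{3/p+1}_{p,1}$. Each Gr\"onwall iteration through these estimates contributes one exponential layer, and the cascade from energy/$\Gamma$ up through $L^{3,1}$ for $\Omega$, $L^{\infty}$ for $\omega_{\theta}$ and $B_{\theta}$, the Lipschitz norm of $v$, the critical Besov norm of $B$, and finally the Vishik step for $v$, produces the tower $\Phi_{6}(t)$. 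Uniqueness is then settled by a low-regularity energy estimate in the natural resolution space.

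The principal obstacle is the source term $-\partial_{z}(\Gamma^{2})$: it destroys the pure transport structure of $\Omega$ exploited in axisymmetric Navier--Stokes, and the $L^{3,1}$ bound must be recovered through Lorentz-space heat-kernel estimates for $\mathcal{L}$ that are simultaneously sharp and genuinely independent of $\mu$, since any logarithmic loss would obstruct the vanishing-viscosity limit announced in the second part of the paper. A secondary difficulty is the absence of a BKM criterion at the critical Besov level, which forces recourse to Vishik's logarithmic estimate and explains the iterated exponential character of $\Phi_{6}(t)$.
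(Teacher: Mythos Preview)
Your overall architecture is sound, but the proposed route to the $L^{3,1}$ bound on $\Omega$ carries a fatal $\mu$-dependence. The forcing $-\partial_{z}(\Gamma^{2})$ lives in the $\Omega$-equation, whose diffusion is $\mu\mathcal{L}$, not $\mathcal{L}$. If you push $\partial_{z}$ onto the Duhamel kernel, the smoothing factor is $(\mu(t-s))^{-1/2}$, not $(t-s)^{-1/2}$, and the time integral blows up as $\mu\to 0$. The paper avoids this entirely: it treats the $\Omega$-equation by a pure Lorentz maximum-principle estimate (uniform in $\mu$ because the dissipative term has a good sign) and writes $\partial_{z}\Sigma^{2}=2\Sigma\,\partial_{z}\Sigma$ (their $\Sigma$ is your $\Gamma$), bounding $\|\partial_{z}\Sigma\|_{L^{3,1}}\lesssim\|\Sigma\|_{\mathscr{B}^{3/2}_{2,1}}$. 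The derivative is then absorbed by the \emph{resistivity} smoothing in the $\Sigma$-equation (coefficient $1$, not $\mu$), via a frequency-localized energy estimate together with the axisymmetric commutator bound $\|[\Delta_{q},v\cdot\nabla]\Sigma\|_{L^{2}}\lesssim \|\Omega\|_{L^{3,1}}(\|B_{\theta}\|_{L^{6}}+\|\Sigma\|_{L^{2}})$. This couples $\|\Omega\|_{L^{\infty}_{t}L^{3,1}}$ and $\|\Sigma\|_{L^{1}_{t}\mathscr{B}^{3/2}_{2,1}}$ into a Gr\"onwall loop that closes and is genuinely $\mu$-free.

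A second, smaller gap: Vishik's estimate \eqref{V-log} is a Lagrangian device for pure transport and does not apply directly once $\mu\Delta$ is present---there is no flow-map representation to compose with. The paper's substitute is the Abidi--Hmidi--Keraani vorticity decomposition $\omega=\sum_{q}\widetilde{\omega}_{q}$, where each $\widetilde{\omega}_{q}$ solves the linearized viscous equation with data $\Delta_{q}\omega^{0}$ and magnetic source $\partial_{z}\Delta_{q}(\Sigma B)$. One proves off-diagonal decay $\|\Delta_{k}\widetilde{\omega}_{q}\|_{L^{\infty}}\lesssim 2^{-|k-q|}e^{CZ(t)}(\,\|\Delta_{q}\omega^{0}\|_{L^{\infty}}+2^{q}\|\Delta_{q}(\Sigma B)\|_{L^{1}_{t}L^{\infty}}\,)$ by running $\mathscr{B}^{-1}_{\infty,\infty}$ and $\mathscr{B}^{1}_{\infty,1}$ estimates on \eqref{m14}. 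Summing with a cut-off at $|k-q|=N\sim Z(t)$ gives $\|\omega\|_{\mathscr{B}^{0}_{\infty,1}}\lesssim (1+Z(t))\Phi_{2}(t)$, which closes for $Z(t)=\|v\|_{L^{1}_{t}\mathscr{B}^{1}_{\infty,1}}$ by Gr\"onwall. This, not Vishik, is where the critical-regularity exponential layer comes from in the viscous setting.
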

Before going over the main ideas of the proof, we shall make some comments.
\begin{Rema}
According to Z. Lei \cite{Lei} the motivation of the previous results is the resemblance between \eqref{MHD(mu)} and axisymmetric Navier-Stokes equations. In broad terms, the velocity vector field in the general axisymmetric case is written down:
\begin{equation*}
v=v^r\vec e_{r}+v^z\vec e_{z} \quad\mbox{and}\quad b=v^\theta\vec e_{\theta}.
\end{equation*}
But, the only difference between the Navier-Stokes \eqref{NS(mu)} for $(v, b)$ and \eqref{MHD(mu)} for $(v, B)$ given by \eqref{1.10} is the "-" sign. However, this difference of sign significantly changes the difficulties in solving 3D axisymmetric incompressible equations of \eqref{MHD(mu)}. For details, see Section \ref{Axisymmetric flows}.
\end{Rema}
\begin{Rema}
The condition $v^\theta=0 $ means that the velocity vector field is an axisymmetric vector field without swirl. In addition, $v^\theta=0$ and $B^r_0=B^z_0=0$ persist through the time in the sense that the solution keeps this initial property, $v^\theta=0 $ and $B^r = B^z=0$ and so \eqref{1.10} is satisfied.  
\end{Rema}

\begin{Rema}Note that the Lorentz $L^{3.1}$ space imposed by Danchin in \cite{Danchin} to treat Euler's axisymmetric equations is very technical; we can replace it with the Lebesgue space $L^3$.  
\end{Rema}
\begin{Rema}
We point out that for $p<3$, the assumption \eqref{H2} is a consequence of $v^{\mu}_0\in \mathscr{B}_{p, 1}^{1+\frac{3}{p}}$. More precise, we have:
\begin{equation*}
\Big\|\frac{\omega}{r}\Big\|_{L^{3,1}}\le C\|v\|_{\mathscr{B}_{p, 1}^{1+\frac{3}{p}}}.
\end{equation*} 
\end{Rema}

The proof of Theorem \ref{The:1} requires two main steps. The first one is the boundedness of the vorticity through time by exploiting axisymmetric Biot-Savart law and Lorentz spaces. Unhappily, this is not enough to propagate the Lipschitz norm of the velocity since the BKM criterion is not known to be valid for critical regularities. Therefore, we stand driven to derive a new estimate for the vorticity in the Besov space $\mathscr{B}^{0}_{\infty,1}$ as stated in \eqref{V-log}.  For this purpose, we rewrite \eqref{MHD(mu)} under the vorticity and the magnetic fields as follows:
\begin{equation}\label{VE(mu-1)}
\left\{ \begin{array}{ll} 
\partial_t \omega_{\theta}+v\cdot\nabla \omega_{\theta}-\mu\big(\Delta-\frac{1}{r^2}\big)\omega_{\theta}=\frac{v_r}{r}\omega_{\theta}-\partial_z\frac{(B_{\theta})^2}{r},
 & \\
\partial_{t} B_{\theta}+v\cdot\nabla B_{\theta}-\big(\Delta-\frac{1}{r^2}\big)B_{\theta}=\frac{v_r}{r}B_{\theta}.
\end{array} \right.
\end{equation}

By putting $\Omega=\frac{\omega_{\theta}}{r}$ and $\Sigma=\frac{B_{\theta}}{r}$, we discover that the last system takes the form
\begin{equation}\label{VE(mu-2)}
\left\{ \begin{array}{ll} 
\partial_t \Omega+v\cdot\nabla \Omega-\mu\big(\Delta +\frac{2}{r}\partial_r\big)\Omega=-\partial_z\Sigma^2,
 & \\
\partial_{t}\Sigma+v\cdot\nabla \Sigma-\big(\Delta+\frac{2}{r}\partial_r\big)\Sigma=0.
\end{array} \right.
\end{equation} 
Compared to the resistive MHD ($\mu=0$) recently treated in \cite{Hassainia} we have an additional term in $\Omega-$equation which contributes a hardness at the level to estimate $\Omega$ in the Lorentz space $L^{3,1}$. In fact, to bound $\Omega$ in $L^{3,1}$, we necessitate a priori estimate for the stretching magnetic field $\Sigma$. For this purpose, we use the smoothing effect for the second equation in \eqref{VE(mu-2)}. These results help us to reach the boundedness of the velocity uniformly for the viscosity and so the global well-posedness topic for the system \eqref{MHD(mu)}.

\hspace{0.5cm}Our second main result motivates by establishing the inviscid limit of the system \eqref{MHD(mu)} towards \eqref{MHD(0)} when the viscosity goes to zero. In particular, we quantify the convergence rate between velocities and magnetic vector fields. More precisely, we will prove the following theorem.
\begin{Theo}[Rate of convergence]\label{The:1.2}
Let $(v_{\mu},B_{\mu})$ and $(v,B)$ be the solutions of \eqref{MHD(mu)} and \eqref{MHD(0)} systems respectively with the same initial data, which satisfying the same conditions as in Theorem \ref{The:1}. Then for every $ p\in(2, \infty),$ we have:
\begin{equation}\label{Rate}
\Vert v_{\mu}-v\Vert_{L^\infty_t\mathscr{B}^0_{p,1}}+\Vert B_{\mu}-B\Vert_{L^\infty_t\mathscr{B}^{-1}_{p,1} \cap  L^1_t\mathscr{B}^{1}_{p,1}}
\le \Big( (\mu t)^{\frac{1}{5}+\frac{3}{5p}}+ (\mu t)^{\frac{3}{5p}}+(\mu t)^{\frac{3}{\max(p,6)}}  \Big)\Phi_{6}(t).
\end{equation}
\end{Theo}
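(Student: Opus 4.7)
The plan is to run a coupled energy estimate on the difference $(\delta v,\delta B)=(v_\mu-v,B_\mu-B)$, which vanishes at $t=0$ and satisfies
\begin{equation*}
\left\{\begin{array}{l}
\partial_t\delta v+v_\mu\cdot\nabla\delta v+\delta v\cdot\nabla v-\mu\Delta\delta v+\nabla\delta p=\mu\Delta v+B_\mu\cdot\nabla\delta B+\delta B\cdot\nabla B,\\
\partial_t\delta B+v_\mu\cdot\nabla\delta B+\delta v\cdot\nabla B-\Delta\delta B=B_\mu\cdot\nabla\delta v+\delta B\cdot\nabla v.
\end{array}\right.
\end{equation*}
The only genuinely inviscid forcing is $\mu\Delta v$ in the first line, and it is also the central difficulty: a direct $\mathscr{B}^0_{p,1}$ bound of this term would cost $\|v\|_{\mathscr{B}^2_{p,1}}$, which Theorem \ref{The:1} does not deliver uniformly in $\mu$ as soon as $p>3$. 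My strategy is to first extract a baseline $L^2$ convergence rate and then promote it to the three Besov rates appearing in \eqref{Rate} by Littlewood--Paley interpolation against the uniform bounds provided by Theorem \ref{The:1}.

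For the $L^2$ step, I pair the $\delta v$--equation with $\delta v$ and the $\delta B$--equation with $\delta B$, then add them. Using $\Div v_\mu=\Div B_\mu=0$, the convective parts disappear, and the magnetic cross-terms $\int(B_\mu\cdot\nabla\delta B)\cdot\delta v$ and $\int(B_\mu\cdot\nabla\delta v)\cdot\delta B$ cancel after an integration by parts. Absorbing $\mu\int\Delta v\cdot\delta v$ by Cauchy--Schwarz and Young and dropping the non-negative viscous term on the left, I am left with
\begin{equation*}
\tfrac{d}{dt}\bigl(\|\delta v\|_{L^2}^2+\|\delta B\|_{L^2}^2\bigr)+\|\nabla\delta B\|_{L^2}^2\lesssim\bigl(\|\nabla v\|_{L^\infty}+\|\nabla B\|_{L^\infty}\bigr)\bigl(\|\delta v\|_{L^2}^2+\|\delta B\|_{L^2}^2\bigr)+\mu\|\nabla v\|_{L^2}^2,
\end{equation*}
whose coefficients are all absorbed by $\Phi_6(t)$ thanks to Theorem \ref{The:1}. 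Grönwall's lemma then yields the baseline
\begin{equation*}
\|\delta v\|_{L^\infty_tL^2}+\|\delta B\|_{L^\infty_tL^2}+\|\nabla\delta B\|_{L^2_{t,x}}\lesssim(\mu t)^{1/2}\Phi_6(t).
\end{equation*}

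The three fractional exponents in \eqref{Rate} each emerge from a Littlewood--Paley split at a free threshold $2^N$, estimating low frequencies $j\le N$ by Bernstein from the $L^2$ rate above and high frequencies $j>N$ by the uniform Theorem \ref{The:1} norms, and then optimizing $N$. For $\delta v\in L^\infty_t\mathscr{B}^0_{p,1}$, Bernstein produces $\sum_{j\le N}2^{3j(1/2-1/p)}(\mu t)^{1/2}\Phi_6\lesssim 2^{N(3/2-3/p)}(\mu t)^{1/2}\Phi_6$ while the uniform bound $\|\delta v\|_{L^\infty_t\mathscr{B}^{1+3/p}_{p,1}}\lesssim\Phi_6$ controls the tail by $2^{-N(1+3/p)}\Phi_6$; balancing gives the exponent $(1+3/p)/5=\tfrac{1}{5}+\tfrac{3}{5p}$. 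The same scheme for $\delta B\in L^\infty_t\mathscr{B}^{-1}_{p,1}$ inserts an extra weight $2^{-j}$, so the low sum is $\sum_{j\le N}2^{j(1/2-3/p)}$, convergent for $p<6$ and divergent for $p>6$: below the threshold $p=6$ the rate is already $(\mu t)^{1/2}$, while above it the balance gives $(\mu t)^{3/p}$, jointly $(\mu t)^{3/\max(p,6)}$. Finally, for the parabolic piece $\|\delta B\|_{L^1_t\mathscr{B}^1_{p,1}}$, I repeat the split using $\|\delta B\|_{L^1_tL^2}\lesssim t(\mu t)^{1/2}\Phi_6$ at low frequencies and the uniform $\|\delta B\|_{L^1_t\mathscr{B}^{1+3/p}_{p,1}}\lesssim\Phi_6$ at high frequencies; equalization yields the exponent $3/(5p)$ once the polynomial factor of $t$ is absorbed into $\Phi_6$.

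The main obstacle is precisely the source $\mu\Delta v$: the interpolation strategy outlined above is engineered to never differentiate $v$ beyond $H^1$ while still cashing in the full critical regularity of $(v,B)$ through the high-frequency tails of the Besov norms of Theorem \ref{The:1}. A secondary difficulty is closing the coupled Grönwall inequality generated by the four nonlinear magnetic couplings $B_\mu\cdot\nabla\delta B$, $\delta B\cdot\nabla B$, $B_\mu\cdot\nabla\delta v$, $\delta v\cdot\nabla B$; this is handled by choosing the resolution spaces $\mathscr{B}^0_{p,1}$ for $\delta v$ and $\mathscr{B}^{-1}_{p,1}\cap L^1_t\mathscr{B}^1_{p,1}$ for $\delta B$ precisely so that Bony's paradifferential product laws distribute one factor in a critical norm and one in a norm uniformly controlled by $\Phi_6$, and by invoking the parabolic smoothing of the $\delta B$--equation to read off the $L^1_t\mathscr{B}^1_{p,1}$ component on the left of \eqref{Rate}.
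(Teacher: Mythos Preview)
Your approach matches the paper's: an $L^2$ energy estimate on the difference system (with the magnetic cross-terms $\int(B_\mu\cdot\nabla\delta B)\cdot\delta v$ and $\int(B_\mu\cdot\nabla\delta v)\cdot\delta B$ cancelling) followed by a Littlewood--Paley low/high-frequency split against the uniform bounds of Theorem~\ref{The:1}, the optimization of the cut-off $N$ producing exactly the three exponents in~\eqref{Rate}. The only notable difference is that the paper keeps the viscous source as $\mu\Delta v_\mu$ and bounds $\|\Delta v_\mu\|_{L^2}\le\|v_\mu\|_{\mathscr{B}^{5/2}_{2,1}}\le\Phi_6$ to obtain the $L^2$ rate $(\mu t)$, then passes through an intermediate $\mathscr{B}^0_{2,1}$ step where the rate drops to $(\mu t)^{1/2}$, whereas your integration-by-parts on $\mu\Delta v$ yields $(\mu t)^{1/2}$ in $L^2$ directly from $\|\nabla v\|_{L^2}$---either route lands on the same final exponents.
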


\begin{Rema}We note that in $L^2$, the rate of convergence is $(\mu t)$ the same as in \cite{hz0, Wu} concerning the axisymmetric Navier Stokes equations, see Section \ref{In:Limit}. But for MHD is $(\mu t)^{\frac12}$, we refer to J. Wu \cite{Wu-1}. However, for the other values of $p$, the rate of convergence differs entirely compared to \cite{hz0, Wu}. This difference is due to the nature of the system \eqref{MHD(mu)} and the used method. 
\end{Rema}

The direct demonstration of the previous theorem in the Besov spaces seems very difficult since the system stems from the difference between systems \eqref{MHD(mu)} and \eqref{MHD(0)} is hyperbolic no symmetric. For this purpose, we start by handling with Lebesgue space $L^2$. Afterward, we proceed by complex interpolation between $\mathscr{B}^0_{2,1}$ and Besov spaces.\\  

{\bf Structure of the paper}. The layout of the present paper is as follows. Section 2 gives a few results about the Littlewood-Paley theory like dyadic decomposition of the unity, Besov spaces, their properties, and paradifferential calculus. We also state a technical lemma about the persistence regularity in Besov spaces for transport-diffusion equation governs the density and magnetic evolution. Section 3 motivates by treating two parts. The first one concerns the energy estimates for different quantities in Lebesgue space $L^2$ (resp. Besov and Lorentz space $L^ {p, q} $). The second part addresses to derive the Lipschitz norm of the velocity through the vorticity decomposition. Finally, we state in the appendix commutator estimates already used in different situations.

\section{Preparatory and preliminaries}

\subsection{Vocabulary of Littlewood-Paley theory}This subsection starts with the definition of the Lorentz spaces and a brief concise about Littlewood-Paley theory.

\begin{defi}\label{Def:7.1} For a measurable function $f^\star$ we define its increasing rearrangement by
\begin{equation*}
f^\star(t)=\inf\big\{s\in\RR_{+}:\mu(\{x,|f(x)|>s\})\le t\big\},
\end{equation*}
where $\mu$ refers to the usual Lebesgue measure. For $(p,q)\in[1,\infty]^2$, the Lorentz space $L^{p,q}$ is the set of functions $f$ such that $\|f\|_{L^{p,q}}<\infty$, with 
\begin{equation*}
\|f\|_{L^{p,q}}\triangleq\left\{\begin{array}{ll}
\bigg(\int_{0}^{\infty}\big(t^{\frac1p}f^\star(t)\big)^{q}\frac{dt}{t}\bigg)^{\frac1q} &\textrm{for $q\in[1,\infty)$,}\\
\sup_{t>0} t^{\frac1p}f^{\star}(t) & \textrm{for $q=\infty$.}
\end{array}
\right.
\end{equation*}
\end{defi}
We can also define the Lorentz space $L^{p,q}$ by the real interpolation process from the Lebesgue spaces
\begin{equation*}
L^{p,q}=[L^1,L^\infty]_{1-\frac{1}{p},q}.
\end{equation*}
where $(p,q)\in ]1,\infty[\times[1,\infty]$. These spaces are characterized by the following properties:
\begin{enumerate}
\item[{\bf(i)}]$L^{p,p}=L^p$,
\item[{\bf(ii)}]$L^{p,q_1}\hookrightarrow L^{p,q_2}$ for every $ 1 \le q_1 \le q_2 \le \infty$,
\item[{\bf(iii)}]$\|uv\|_{L^{p,q}} \le \|u\|_{L^\infty} \|v\|_{L^{p,q}}.$
\end{enumerate}

Next, we state a few phrases about the so-called Littlewood-Paley and some of its properties.
\begin{defi}
Let $\chi\in\mathscr{D}(\RR^3)$ be a reference cut-off function, monotonically decaying along rays and so that: $\chi\equiv 1$ on $B(0,\frac12)$ and $0\le\chi\le1$ on $B(\frac12,1)$. Define $\varphi(\xi)\triangleq\chi(\frac{\xi}{2})-\chi(\xi)$. We obviously check that $\varphi\ge0$ and $$\supp\varphi\subset\mathscr{C}\triangleq\{\xi\in\RR^3:\frac12  \le\|\xi\|\le1\}.$$ 
\end{defi}
We have the following elementary properties, see for example \cite{Bahouri-Danchin-Chemin, che1}.
\begin{prop} Let $\chi$ and $\varphi$ be as above. Then the following assertions are hold.
\begin{enumerate}
\item[{\bf(i)}] \textnormal{Decomposition of the unity:} 
$$
\forall\xi\in\RR^3,\quad \chi(\xi)+\sum_{q\ge0}\varphi(2^{-q}\xi)=1.
$$
\item[{\bf(ii)}] \textnormal{Almost orthogonality in the sense of $\ell^2$:}
$$
\forall\xi\in\RR^3,\quad \frac{1}{2}\le\chi^2(\xi)+\sum_{q\ge0}\varphi^2(2^{-q}\xi)\le1.
$$
\end{enumerate}
\end{prop}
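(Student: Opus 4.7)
The plan is to prove both assertions by exploiting the telescoping nature of the definition $\varphi(\xi)=\chi(\xi/2)-\chi(\xi)$ together with finite-overlap properties of the supports.

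For part (i), I would fix $\xi\in\RR^3$ and rewrite
$$
\sum_{q=0}^{N}\varphi(2^{-q}\xi)=\sum_{q=0}^{N}\bigl(\chi(2^{-q-1}\xi)-\chi(2^{-q}\xi)\bigr)=\chi(2^{-N-1}\xi)-\chi(\xi).
$$
Since $\chi\equiv 1$ on $B(0,\tfrac12)$, for every fixed $\xi$ there exists $N_0=N_0(\xi)$ such that $2^{-N-1}\xi\in B(0,\tfrac12)$ for all $N\ge N_0$, hence $\chi(2^{-N-1}\xi)=1$. Passing to the limit $N\to\infty$ yields $\sum_{q\ge 0}\varphi(2^{-q}\xi)=1-\chi(\xi)$, which is exactly the stated identity.

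For part (ii), the upper bound is immediate: since $0\le\chi\le 1$ and $0\le\varphi(2^{-q}\cdot)\le 1$ pointwise, each square is bounded by the function itself, so by (i),
$$
\chi^2(\xi)+\sum_{q\ge 0}\varphi^2(2^{-q}\xi)\le\chi(\xi)+\sum_{q\ge 0}\varphi(2^{-q}\xi)=1.
$$
The lower bound is the delicate point and relies on a \emph{bounded overlap} observation: because $\supp\varphi\subset\{\tfrac12\le|\xi|\le 1\}$, one has $\supp\varphi(2^{-q}\cdot)\subset\{2^{q-1}\le|\xi|\le 2^q\}$, so the supports of $\varphi(2^{-q}\cdot)$ and $\varphi(2^{-q'}\cdot)$ are disjoint whenever $|q-q'|\ge 2$; likewise $\chi$ and $\varphi(2^{-q}\cdot)$ have disjoint supports for $q\ge 2$. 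Consequently, at any fixed $\xi$ at most two of the functions in $\{\chi,\varphi(2^{-q}\cdot)\}_{q\ge 0}$ are nonzero. Denoting those (at most two) nonzero values by $a,b\in[0,1]$, the identity in (i) gives $a+b=1$, and the elementary inequality $a^2+b^2\ge\tfrac12(a+b)^2=\tfrac12$ (an application of Cauchy--Schwarz with two terms) provides the desired lower bound.

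The only mildly subtle point is the bounded overlap count; everything else is direct. I would therefore carry out the proof in the order: (1) telescoping identity, (2) pointwise convergence using $\chi\equiv 1$ near $0$, (3) support analysis establishing the at-most-two overlap, and (4) Cauchy--Schwarz for the lower bound and monotonicity of $t\mapsto t^2$ on $[0,1]$ for the upper bound.
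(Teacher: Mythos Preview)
Your proof is correct and follows the standard argument. Note, however, that the paper does not actually prove this proposition: it is stated as an elementary fact with a reference to \cite{Bahouri-Danchin-Chemin, che1}, so there is no in-paper proof to compare against. Your telescoping argument for (i) and the combination of $t^2\le t$ on $[0,1]$ for the upper bound with the bounded-overlap / Cauchy--Schwarz argument for the lower bound in (ii) is precisely the route taken in those references.

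One minor caveat worth flagging: you used the paper's stated support $\supp\varphi\subset\{\tfrac12\le|\xi|\le 1\}$, but with the definition $\varphi(\xi)=\chi(\xi/2)-\chi(\xi)$ and $\supp\chi\subset\overline{B(0,1)}$ one actually gets $\supp\varphi\subset\{\tfrac12\le|\xi|\le 2\}$ (the paper's stated annulus is a typo). This does not affect your argument: with the corrected support, $\varphi(2^{-q}\xi)\ne 0$ forces $q\in(\log_2|\xi|-1,\log_2|\xi|+1)$, an open interval of length $2$ containing at most two integers, and the same check shows $\chi$ can overlap only with $q=0$. So the ``at most two nonzero terms'' claim survives, and the Cauchy--Schwarz step $a^2+b^2\ge\tfrac12(a+b)^2=\tfrac12$ goes through unchanged.
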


The Littlewood-Paley or cut-off operators or dyadic blocks are defined as follows.
\begin{defi} For every $u\in\mathscr{S}'(\RR^3)$, setting 
\begin{equation*}
\Delta_{-1}u\triangleq\chi(\DD)u,\quad \Delta_{q}u\triangleq\varphi(2^{-q}\DD)u\quad \mbox{if}\;q\in\NN,\quad S_{q}u\triangleq\sum_{j\le q-1}\Delta_{j}u\quad\mbox{for}\; q\ge0.
\end{equation*}
\end{defi}

Some properties of $\Delta_q$ and $S_q$ are listed in the following proposition.
\begin{prop} Let $u,v\in\mathscr{S}'(\RR^3)$ we have
\begin{enumerate}
\item[{\bf(i)}] $\vert p-q\vert\ge2\Longrightarrow\Delta_p\Delta_q u\equiv0$,
\item[{\bf(ii)}] $\vert p-q\vert\ge4\Longrightarrow\Delta_q(S_{p-1}u\Delta_p v)\equiv0$,
\item[{\bf(iii)}] $\Delta_q, S_q: L^p\rightarrow L^p$ uniformly with respect to  $q$ and $p$.
\item[{\bf(iv)}] 
$$
u=\sum_{q\ge-1}\Delta_q u.
$$
\end{enumerate}  
\end{prop}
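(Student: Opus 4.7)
The plan is to prove each of the four assertions by Fourier-localization arguments combined with Young's convolution inequality, which are the standard techniques of Littlewood--Paley theory. First I would record the basic Fourier supports: by construction, $\mathcal{F}(\Delta_q u)$ is supported in the annulus $\{2^{q-1}\le|\xi|\le 2^{q+1}\}$ for $q\ge 0$ (and in the ball $\{|\xi|\le 1\}$ when $q=-1$), while $\mathcal{F}(S_q u)$ is supported in the ball $\{|\xi|\le 2^q\}$. These inclusions come directly from $\supp\varphi\subset\{\frac12\le|\xi|\le 1\}$ and $\supp\chi\subset\{|\xi|\le 1\}$ together with the scaling $\varphi(2^{-q}\cdot)$.

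For (i), I would write $\widehat{\Delta_p\Delta_q u}(\xi)=\varphi(2^{-p}\xi)\varphi(2^{-q}\xi)\hat u(\xi)$ and observe that the two cutoffs have disjoint supports as soon as the annuli of radii $\sim 2^p$ and $\sim 2^q$ do not overlap. A direct check shows this happens for $|p-q|\ge 2$. For (ii), I would invoke the classical fact that the spectrum of a product lies in the algebraic sum of the spectra, so
\begin{equation*}
\supp\mathcal{F}(S_{p-1}u\,\Delta_p v)\subset\supp\mathcal{F}(S_{p-1}u)+\supp\mathcal{F}(\Delta_p v)\subset\{|\xi|\le 2^{p-1}\}+\{2^{p-1}\le|\xi|\le 2^{p+1}\},
\end{equation*}
which is contained in an annulus of the form $\{c_1 2^p\le|\xi|\le c_2 2^p\}$. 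Applying $\Delta_q$ then forces the result to vanish once $|p-q|\ge 4$, by the same disjoint-annuli argument.

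For (iii), I would realize $\Delta_q$ and $S_q$ as convolution operators. Setting $h\triangleq\mathcal{F}^{-1}\varphi$ and $\tilde h\triangleq\mathcal{F}^{-1}\chi$, both Schwartz functions, the rescalings $h_q(x)=2^{3q}h(2^q x)$ and $\tilde h_q(x)=2^{3q}\tilde h(2^q x)$ satisfy $\|h_q\|_{L^1}=\|h\|_{L^1}$ and $\|\tilde h_q\|_{L^1}=\|\tilde h\|_{L^1}$, independently of $q$. Since $\Delta_q u=h_q*u$ for $q\ge 0$ (respectively $\Delta_{-1}u=\tilde h_0*u$) and $S_q u=\tilde h_q*u$, Young's inequality yields the uniform $L^p$-bound for every $p\in[1,\infty]$.

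Finally, (iv) is a telescoping identity at the Fourier multiplier level: from $\chi(\xi)+\sum_{q\ge 0}\varphi(2^{-q}\xi)=1$ established earlier, applying the partial sum to $u$ gives $S_N u=\Delta_{-1}u+\sum_{q=0}^{N-1}\Delta_q u$. I would close the argument by showing $S_N u\to u$ in $\mathscr{S}'(\RR^3)$ as $N\to\infty$, which reduces via duality to $\tilde h_N*\phi\to\phi$ in $\mathscr{S}$ for every test function $\phi$; this is a standard approximation-of-the-identity fact using that $\tilde h\in\mathscr{S}$ with $\int\tilde h=1$. The only subtlety worth flagging is this convergence step in (iv), since for general tempered distributions it holds only modulo polynomials; however, because $\Delta_{-1}$ already absorbs the whole low-frequency ball $\{|\xi|\le 1\}$, no negative-index tail survives and the identity holds genuinely in $\mathscr{S}'$ for the distributions handled in the paper.
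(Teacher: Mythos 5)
The paper does not prove this proposition; it is quoted as standard Littlewood--Paley material from the cited references \cite{Bahouri-Danchin-Chemin,che1}. Your argument is the standard textbook proof and is essentially the intended one: disjointness of dyadic annuli for {\bf(i)}, spectral localization of products for {\bf(ii)}, Young's inequality with the rescaled kernels $h_q$, $\tilde h_q$ for {\bf(iii)}, and the telescoping identity plus an approximation-of-the-identity argument for {\bf(iv)} --- including the correct observation that the convergence holds genuinely in $\mathscr{S}'$, and not merely modulo polynomials, because the inhomogeneous block $\Delta_{-1}$ absorbs the whole low-frequency ball.

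One step in {\bf(ii)} needs more care than you give it. The set $\{|\xi|\le 2^{p-1}\}+\{2^{p-1}\le|\xi|\le 2^{p+1}\}$ that you display is \emph{not} contained in an annulus $\{c_1 2^p\le|\xi|\le c_2 2^p\}$ with $c_1>0$: taking two opposite vectors of length $2^{p-1}$ shows it contains the origin, so it is a ball of radius $\tfrac52\,2^p$. From that inclusion alone you only get $\Delta_q(S_{p-1}u\,\Delta_p v)\equiv 0$ for $q\ge p+4$, not for $q\le p-4$. To recover the lower bound on the spectrum one must use that $\mathcal{F}(S_{p-1}u)$ is supported in a ball whose radius is \emph{strictly smaller} than the inner radius of the annulus carrying $\mathcal{F}(\Delta_p v)$; this is exactly what the standard normalization of the cut-offs guarantees (e.g.\ $\chi$ supported in $B(0,\tfrac43)$ and $\varphi$ in $\{\tfrac34\le|\xi|\le\tfrac83\}$, so that the sum of supports lies in $\{\tfrac1{12}2^p\le|\xi|\le\tfrac{10}{3}2^p\}$). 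With that one-line adjustment --- which also explains why the thresholds $2$ and $4$ depend on the chosen normalization --- your proof is complete.
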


Likewise, the homogeneous operators $\dot{\Delta}_{q}$ and $\dot{S}_{q}$ are defined by
\begin{equation}\label{Hom}
\forall{q}\in \mathbb{Z}\quad\dot{\Delta}_{q}=\varphi(2^{q}D)u, \quad \dot{S}_{q}=\sum_{ j\le q-1}\dot{\Delta}_{j}v.
\end{equation}
Now, we define the Besov spaces in the following way.
\begin{defi} For $(s,p,r)\in\RR\times[1,  +\infty]^2$. The inhomogeneous Besov space $B_{p,r}^s$ (resp. the homogeneous Besov space $\dot{B}_{p,r}^s$) is the set of all tempered distributions $u\in\mathscr{S}^{'}$ (resp. $u\in\mathscr{S}^{'}_{|{\bf P}})$ such that
\begin{eqnarray*}
&&\Vert u\Vert_{\mathscr{B}_{p, r}^{s}}\triangleq\Big(2^{qs}\Vert \Delta_{q} u\Vert_{L^{p}}\Big)_{\ell^r}<\infty. \\
&&\big(\mbox{resp. }\Vert u\Vert_{\dot{\mathscr{B}}_{p, r}^{s}}\triangleq\ \Big(2^{qs}\Vert \dot\Delta_{q} u\Vert_{L^{p}}\Big)_{\ell^r(\mathbb{Z})}<\infty\big), 
\end{eqnarray*}
where ${\bf P}$ denotes the set of polynomials.
\end{defi}


The celebrate  {\it Bony's} decomposition \cite{b} enables us to split formally the product of two tempered distributions $u$ and $v$ into three pieces. In what follows, we shall adopt the following definition for paraproduct and remainder:
\begin{defi} For a given $u, v\in\mathscr{S}'$ we have
 $$
uv=T_u v+T_v u+\mathscr{R}(u,v),
$$
with
$$T_u v=\sum_{q}S_{q-1}u\Delta_q v,\quad  \mathscr{R}(u,v)=\sum_{q}\Delta_qu\widetilde\Delta_{q}v  \quad\hbox{and}\quad \widetilde\Delta_{q}=\Delta_{q-1}+\Delta_{q}+\Delta_{q+1}.
$$
\end{defi}

The mixed space-time spaces are stated as follows. 
\begin{defi} Let $T>0$ and $(s,\beta,p,r)\in\RR\times[1, \infty]^3$.  We define the spaces $L^{\beta}_{T}\mathscr{B}_{p,r}^s$ and $\widetilde L^{\beta}_{T}\mathscr{B}_{p,r}^s$ respectively by: 
$$
L^\beta_{T}\mathscr{B}_{p,r}^s\triangleq\Big\{u: [0,T]\to\mathscr{S}^{'}; \Vert u\Vert_{L_{T}^{\beta}B_{p, r}^{s}}=\big\Vert\big(2^{qs}\Vert \Delta_{q}u\Vert_{L^{p}}\big)_{\ell^{r}}\big\Vert_{L_{T}^{\beta}}<\infty\Big\},
$$
$$
\widetilde L^{\beta}_{T}\mathscr{B}_{p,r}^s\triangleq\Big\{u:[0,T]\to\mathscr{S}^{'}; \Vert u\Vert_{\widetilde L_{T}^{\beta}\mathscr{B}_{p, r}^{s}}=\big(2^{qs}\Vert \Delta_{q}u\Vert_{L_{T}^{\beta}L^{p}}\big)_{\ell^{r}}<\infty\Big\}.
$$
The relationship between these spaces is given by the following embeddings. Let $ \varepsilon>0,$ then 
\begin{equation}\label{embeddings}
\left\{\begin{array}{ll}
L^\beta_{T}\mathscr{B}_{p,r}^s\hookrightarrow\widetilde L^\beta_{T}\mathscr{B}_{p,r}^s\hookrightarrow L^\beta_{T}\mathscr{B}_{p,r}^{s-\varepsilon} & \textrm{if  $r\geq \beta$},\\
L^\beta_{T}\mathscr{B}_{p,r}^{s+\varepsilon}\hookrightarrow\widetilde L^\beta_{T}\mathscr{B}_{p,r}^s\hookrightarrow L^\beta_{T}\mathscr{B}_{p,r}^s & \textrm{if $\beta\geq r$}.
\end{array}
\right.
\end{equation}
\end{defi}
The main interest of the {\it Bernstein's} inequalities is that the derivatives (or more generally the Fourier multipliers) act in a very special way on distributions the Fourier transform of which is supported in a ball or a ring. The proof can be found in \cite{Bahouri-Danchin-Chemin,che1}.
\begin{lem}\label{Bernstein} There exists a constant $C>0$ such that for $1\le a\le b\le\infty$, for every function $u$ and every $q\in\NN\cup\{-1\}$, we have
\begin{enumerate}
\item[{\bf(i)}]
\begin{equation*}
\sup_{\vert\alpha\vert=k}\Vert\partial^{\alpha}S_{q}u\Vert_{L^{b}}\le C^{k}2^{q\big(k+3\big(\frac{1}{a}-\frac{1}{b}\big)\big)}\Vert S_{q}u\Vert_{L^{a}},\\
\end{equation*}
\item[{\bf(ii)}]
\begin{equation*}
C^{-k}2^{qk}\Vert\Delta_{q}u\Vert_{L^{a}}\le\sup_{\vert\alpha\vert=k}\Vert\partial^{\alpha}\Delta_{q}u\Vert_{L^{a}}\le C^{k}2^{qk}\Vert\Delta_{q}u\Vert_{L^{a}}.
\end{equation*}
\end{enumerate}
\end{lem}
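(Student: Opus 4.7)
The proof rests on two classical ingredients: spectral localization and Young's convolution inequality, applied at the dyadic scale $2^{q}$. Recall that $\widehat{S_{q}u}$ is supported in a ball of radius of order $2^{q}$, while $\widehat{\Delta_{q}u}$ lives in a dyadic ring of the same size; this is what makes derivatives cheap and what makes integer differentiation essentially invertible.

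For (i), the idea is to realize $\partial^{\alpha}S_{q}u$ as a convolution with a rescaled Schwartz kernel. Choose once and for all an auxiliary $\tilde{\chi}\in\mathscr{D}(\RR^{3})$ identically one on a neighbourhood of $\supp\chi$. Because of the support of $\widehat{S_{q}u}$, we may write $S_{q}u=\tilde{\chi}(2^{-q}\DD)S_{q}u$, and hence $\partial^{\alpha}S_{q}u = g_{q,\alpha}\ast S_{q}u$ with
\[
g_{q,\alpha}(x)=2^{q(|\alpha|+3)}h_{\alpha}(2^{q}x),\qquad h_{\alpha}=\mathscr{F}^{-1}\big((i\xi)^{\alpha}\tilde{\chi}(\xi)\big)\in\mathscr{S}(\RR^{3}).
\]
Young's inequality with $1+\frac{1}{b}=\frac{1}{a}+\frac{1}{c}$ then gives $\|\partial^{\alpha}S_{q}u\|_{L^{b}}\le\|g_{q,\alpha}\|_{L^{c}}\|S_{q}u\|_{L^{a}}$, and rescaling yields $\|g_{q,\alpha}\|_{L^{c}}=2^{q(|\alpha|+3(1/a-1/b))}\|h_{\alpha}\|_{L^{c}}$, which is exactly (i). The constant $C^{k}$ emerges from the Schwartz estimate $\|h_{\alpha}\|_{L^{c}}\lesssim C^{|\alpha|}$.

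The upper bound in (ii) is identical, with $\tilde{\chi}$ replaced by a $\tilde{\varphi}\in\mathscr{D}(\RR^{3}\setminus\{0\})$ equal to one on the ring $\mathscr{C}$. The only non-routine step is the reverse inequality in (ii), which amounts to \emph{inverting differentiation} on the dyadic ring. Since $|\xi|^{2k}$ does not vanish on $\mathscr{C}$, one may factor
\[
\tilde{\varphi}(\xi)=\sum_{|\alpha|=k}(i\xi)^{\alpha}\psi_{\alpha}(\xi),\qquad \psi_{\alpha}(\xi)=c_{\alpha}\,\overline{(i\xi)^{\alpha}}\,|\xi|^{-2k}\tilde{\varphi}(\xi)\in\mathscr{D}(\RR^{3}\setminus\{0\}).
\]
Evaluating the symbol at $2^{-q}\xi$, convolving against $\Delta_{q}u$, and once more invoking Young's inequality produces the factor $2^{-qk}$ from the rescaling of $\psi_{\alpha}$; this delivers $\|\Delta_{q}u\|_{L^{a}}\le C^{k}2^{-qk}\sum_{|\alpha|=k}\|\partial^{\alpha}\Delta_{q}u\|_{L^{a}}$, which is the desired lower bound.

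\textbf{Main obstacle.} There is no serious obstacle; the only careful bookkeeping is to track the $C^{k}$ dependence, which follows from the observation that the $L^{c}$-norms of the Schwartz functions associated with $(i\xi)^{\alpha}\tilde{\chi}$ and with $\psi_{\alpha}$ grow at most geometrically in $k$. The argument is entirely standard and essentially reproduces the presentation of \cite{Bahouri-Danchin-Chemin,che1}.
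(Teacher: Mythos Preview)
Your proposal is correct and follows the standard route via Young's convolution inequality combined with spectral localization on dyadic balls and annuli. The paper itself does not give a proof of this lemma but simply refers to \cite{Bahouri-Danchin-Chemin,che1}, and your argument is precisely the one found in those references.
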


As a consequence of Bernstein inequality {\bf(i)} is the embedding $
\mathscr{B}_{p,r}^{s}\hookrightarrow \mathscr{B}^{\widetilde s}_{\widetilde p,\widetilde r}\quad \textnormal{whenever}\; \widetilde{p}\ge p$, with $\widetilde s<s-2\big(\frac{1}{p}-\frac{1}{\widetilde p}\big)$ or $\widetilde s=s-2\big(\frac{1}{p}-\frac{1}{\widetilde{p}}\big)$ and $\widetilde r\le r$.

We end this paragraph by the persitence of Besov spaces for the following transport-diffusion equation:
\begin{equation}\label{TD(mu)}
\left\{ \begin{array}{ll} 
\partial_{t}f+v\cdot\nabla f-\mu\Delta f=g, \\
g_{| t=0}=g^0.  
\end{array} \right.\tag{TD$_\mu$}
\end{equation} 
\begin{prop}\label{Prop:1.1} Let $(s, r, p)\in(-1, 1)\times[1, \infty]^2$ and $v$ be a smooth vector field in divergence-free. We assume that $f^0\in\mathscr{B}_{p, r}^{s}$ and $g\in L_{loc}^{1}(\RR_+; \mathscr{B}_{p, r}^{s})$. Then for every smooth solution $a$  of \eqref{TD(mu)} and  $t\geq0$ we have
\begin{equation*}
\Vert f(t)\Vert_{\mathscr{B}_{p, r}^{s}}\le Ce^{CV(t)}\bigg(\Vert f^0\Vert_{\mathscr{B}_{p, r}^{s}}+\int_{0}^{t}e^{-CV(\tau)}\Vert g(\tau)\Vert_{\mathscr{B}_{p, r}^{s}}d\tau\bigg),
\end{equation*}
with
\begin{equation*}
V(t)=\int_{0}^t\|\nabla v(\tau)\|_{L^\infty}d\tau
\end{equation*}
and $C$  a constant which depends only on $s$ and not on the viscosity. For the limit case 
\begin{equation*}
s=-1, r=\infty \mbox{ and } p\in[1, \infty]\quad\mbox{ or }\quad s=1, r=1 \mbox{ and } p\in[1, \infty]
\end{equation*}
the above estimate remains true despite we change $V(t)$  by $Z(t)\triangleq\Vert v\Vert_{L_{t}^{1}\mathscr{B}_{\infty, 1}^{1}}$. In addition if $f=\textnormal{curl } v$, then the above estimate holds true for all $s\in[1, +\infty)$.
\end{prop}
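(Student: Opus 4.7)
My plan is to dyadically localize the equation and derive an $L^p$ estimate on each block before summing in $\ell^r$. Applying $\Delta_q$ to \eqref{TD(mu)} gives
$$
\partial_t \Delta_q f + v\cdot\nabla \Delta_q f - \mu\,\Delta\Delta_q f = \Delta_q g - [\Delta_q,\,v\cdot\nabla] f.
$$
Multiplying by $|\Delta_q f|^{p-2}\Delta_q f$ and integrating in space, the transport term drops thanks to $\Div v=0$, while the viscous contribution is nonpositive; consequently
$$
\|\Delta_q f(t)\|_{L^p}\le \|\Delta_q f^0\|_{L^p} + \int_0^t\bigl(\|\Delta_q g(\tau)\|_{L^p} + \|[\Delta_q,v\cdot\nabla]f(\tau)\|_{L^p}\bigr)d\tau,
$$
with a constant independent of $\mu$, which is essential for the uniform-in-viscosity statements of the paper.

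I then multiply by $2^{qs}$ and take the $\ell^r$ norm. The decisive input is the commutator estimate recorded in the appendix, which for $(s,r,p)\in(-1,1)\times[1,\infty]^2$ reads
$$
\Bigl\|\bigl(2^{qs}\|[\Delta_q,v\cdot\nabla]f\|_{L^p}\bigr)_q\Bigr\|_{\ell^r}\le C\,\|\nabla v\|_{L^\infty}\,\|f\|_{\mathscr{B}^s_{p,r}}.
$$
Combined with Minkowski's inequality in time, this yields
$$
\|f(t)\|_{\mathscr{B}^s_{p,r}}\le \|f^0\|_{\mathscr{B}^s_{p,r}} + \int_0^t\|g(\tau)\|_{\mathscr{B}^s_{p,r}}\,d\tau + C\int_0^t\|\nabla v(\tau)\|_{L^\infty}\|f(\tau)\|_{\mathscr{B}^s_{p,r}}\,d\tau,
$$
and Gronwall's lemma produces the stated estimate with $V(t)$.

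For the endpoint pairs $(s,r)=(-1,\infty)$ and $(1,1)$, the preceding commutator bound fails because the remainder/paraproduct terms in Bony's decomposition cease to be absolutely summable at these critical exponents. One substitutes the logarithmic-type bound
$$
\Bigl\|\bigl(2^{qs}\|[\Delta_q,v\cdot\nabla]f\|_{L^p}\bigr)_q\Bigr\|_{\ell^r}\le C\,\|v\|_{\mathscr{B}^1_{\infty,1}}\,\|f\|_{\mathscr{B}^s_{p,r}},
$$
after which Gronwall naturally yields $Z(t)$ in place of $V(t)$. When $f=\Curl v$ and $s\ge 1$, one further exploits the Biot-Savart identity $v=\nabla\Delta^{-1}\Curl f$, which controls $\|v\|_{\mathscr{B}^s_{p,r}}$ by $\|f\|_{\mathscr{B}^{s-1}_{p,r}}$ and thereby allows the high-regularity commutator to be recast purely in terms of $\|\nabla v\|_{L^\infty}\|f\|_{\mathscr{B}^s_{p,r}}$, lifting the upper endpoint restriction. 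The main technical obstacle is precisely these critical commutator estimates at $s=\pm 1$, where the standard paradifferential analysis must be delicately replaced by the logarithmic substitute.
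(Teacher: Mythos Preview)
The paper does not supply a proof of this proposition: it is stated in the preliminaries as a known result and then used freely throughout (see, e.g., the proof of Proposition~\ref{Prop:4.3}), with the underlying commutator bound recorded separately as Lemma~\ref{Lem:2.3} in the appendix and referenced to \cite{che1}. Your outline is exactly the standard argument behind such statements---localize, use the sign of the viscous term to drop $\mu$, invoke the commutator estimate, and Gronwall---so there is nothing to compare; your sketch is correct and matches what the cited references do.
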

\section{Axisymmetric flows for MHD}\label{Axisymmetric flows}
In this section we develop the axisymmetric geometry for the MHD equations \eqref{MHD(mu)} and we
derive their cylindrical form. For this purpose, let $ x = (x_1, x_2,x_3) \in  \RR^3$ define $r =(x^2_1+x^2_2)^{1/2}$. An axisymmetric solution for 3D MHD equations \eqref{MHD(mu)} is a triplet $(v,B, p)$ configured as:
\begin{equation*} 
\left\{ \begin{array}{ll}
 v(t,x)=v^r(t,r,z) \vec{e}_{r}+v^\theta(t,r,z) \vec{e}_{\theta} +v^z(t,r,z) \vec{e}_{z}\\
 B(t,x)=B^r(t,r,z) \vec{e}_{r}+B^\theta(t,r,z) \vec{e}_{\theta} +B^z(t,r,z) \vec{e}_{z}\\
p(t,x)=p(t,r,z),
\end{array} \right.
\end{equation*}
where, $(\vec{e}_{r},\vec{e}_{\theta} ,\vec{e}_{z})$ refers to the cylindrical basis of $\RR^3$ expressed in \eqref{(1.7)}. Let us mention that the components $v^r, v^\theta$ and $v^z$ (resp. $B^r, B^\theta$  and $B^z$) do not depend on the angular (swirl) variable. So, the general axisymmetric version of \eqref{MHD(mu)} is given by


\begin{equation}\label{Cylindrical-version}
\left\{\begin{array}{llllll}
\partial_t v^r+v^r\partial_r v^r +v^z \partial_z v^r -\frac{(v^\theta)^2}{r}-\mu(\Delta-\frac{1}{r^2})v^r +\partial_r p=B^r\partial_r B^r +B^z \partial_z B^r-\frac{(B^{\theta})^2}{r}, &\\
\partial_t v^{\theta}+v^r\partial_r v^\theta +v^z \partial_z v^\theta +\frac{v^r}{r}v^{\theta}-\mu(\Delta-\frac{1}{r^2})v^\theta=B^r\partial_r B^\theta +B^z \partial_z B^\theta+\frac{B^r}{r}B^\theta,&\\
\partial_t v^z+v^r\partial_r  v^z +v^z \partial_z v^{z} -\mu \Delta v^z+\partial_z p=B^r\partial_r B^z+B^z \partial_z B^z, &\\
\partial_t B^r+v^r\partial_r B^r +v^z \partial_z B^r -\Delta B^r -(\Delta-\frac{1}{r^2})B^r=B^r\partial_r v^r+B^z \partial_z v^r, &\\
\partial_t B^{\theta}+v^r\partial_r B^\theta +v^z \partial_z B^\theta +\frac{B^r}{r}v^{\theta}-(\Delta-\frac{1}{r^2})B^\theta=B^r\partial_r v^\theta+B^z \partial_z v^\theta +\frac{v^r}{r}B^\theta,&\\
\partial_t B^z+v^r\partial_r B^z +v^z \partial_z B^z-\Delta B^z =B^r\partial_r v^z+B^z \partial_z v^z ,
\end{array}
\right.
\end{equation}
with, the incompressibility requirement $\Div v = 0$ and $\Div B = 0$:
\begin{equation}\label{Divv,DivB}
\partial_r v^r + \frac{v^r}{r}+\partial_z v^z=0, \quad \partial_r B^r + \frac{B^r}{r}+\partial_z B^z=0.
\end{equation}

\hspace{0.5cm}If $(v^{\theta}_{0},B^{r}_{0},B^{z}_{0})$ vanishes. An elementary cumputation claims that $(v^\theta,B^r,B^z)$ is also vanishes for all time, so, $(v,B)$ takes the form:  
 \begin{equation}\label{special-structure} 
\left\{ \begin{array}{ll}
 v(t,x)=v^r(t,r,z) \vec{e}_{r} +v^z(t,r,z) \vec{e}_{z}\\
 B(t,x)=B^\theta(t,r,z) \vec{e}_{\theta} ,
\end{array} \right.
\end{equation}
Further, the triplet $(v,B,p)$ given by \eqref{Cylindrical-version} and \eqref{Divv,DivB} obeys the following parabolic equations:
\begin{equation*}
\left\{ \begin{array}{ll}
\partial_{t}v^{r}+ v^r\partial_r  v^r +v^z \partial_z v^{r}-\frac{(v^\theta)^2}{r}+\partial_r p -\mu\Big(\Delta -\frac{1}{r^{2}}\Big)v^{r}=-\frac{(B^\theta)^2}{r},\\
\partial_{t}v^{z}+v^r\partial_r  v^z +v^z \partial_z v^{z} +\partial_z p -\mu\Delta v_{z}=0,\\
\partial_{t} B^\theta+ v^r\partial_r B^\theta +v^z \partial_z B^\theta -(\Delta-\frac{1}{r^{2}}) B^\theta =\frac{v^r B^\theta }{r},
\end{array} \right.
\end{equation*}
together with the incompressible condition
$$\partial_r v^r+\frac{v^r}{r} +\partial_z v^z =0.$$

\hspace{0.5cm}In terms of vorticity $\omega= \nabla \times v $ we have:
$$\omega(t,x)=\omega ^r(t,r,z) \vec{e}_{r}+\omega ^\theta(t,r,z) \vec{e}_{\theta} +\omega ^z(t,r,z) \vec{e}_{z},$$
where
$$
\omega^r =-\partial_z v^\theta, \quad  \omega^\theta =\partial_r v^z-\partial_z v^r, \quad \omega^z=\frac{1}{r}\partial_r(rv^\theta).  
$$
The axisymmetric geormetry without swirl implies that $\omega^r=\omega_z=0$ and $\omega^{\theta}=\partial_r v^r-\partial_z v^z$ (see, Proposition \ref{Prop-1} below). A straightforward calculus claims that $(\omega^\theta,B^\theta)$ satifies
\begin{equation*}
\left\{ \begin{array}{ll} 
\partial_t \omega_{\theta}+v\cdot\nabla \omega_{\theta}-\mu\big(\Delta-\frac{1}{r^2}\big)\omega_{\theta}=\frac{v_r}{r}\omega_{\theta}-\partial_z\frac{(B_{\theta})^2}{r},
 & \\
\partial_{t} B_{\theta}+v\cdot\nabla B_{\theta}-\big(\Delta-\frac{1}{r^2}\big)B_{\theta}=\frac{v_r}{r}B_{\theta}
\end{array} \right.
\end{equation*} 
Define the quantities  $\Omega=\frac{\omega_\theta}{r} $ and $ \Sigma=\frac{B^\theta}{r}$, we discover that $(\Omega,\Sigma) $ is governed by
\begin{equation*}
\left\{ \begin{array}{ll} 
\partial_t \Omega+v\cdot\nabla \Omega-\mu\big(\Delta +\frac{2}{r}\partial_r\big)\Omega=-\partial_z\Sigma^2,
 & \\
\partial_{t}\Sigma+v\cdot\nabla \Sigma-\big(\Delta+\frac{2}{r}\partial_r\big)\Sigma=0.
\end{array} \right.
\end{equation*} 
Really, the previous system will be helpful to set up some important a priori estimates which arising in the boudedness of the vorticity in $L^\infty$ and $\mathscr{B}^0_{\infty,1}$ respectively. These estimates permit us to derive the Lipschitz norm of the velocity and consequently the system \eqref{MHD(mu)} is globally well-posed in time, see Subsection \ref{Axi- estimates}. 
\begin{Rema}We notice that the axisymmetric Navier-Stokes equations ($B\equiv0)$ read as follows:
\begin{equation}\label{axisym-NS}
\left\{ \begin{array}{ll}
\partial_{t}v^{r}+ v^r\partial_r  v^r +v^z \partial_z v^{r}+\partial_r p -\mu(\Delta -\frac{1}{r^{2}})v_{r}=0,\\
\partial_t v^{\theta}+v^r\partial_r v^\theta +v^z \partial_z v^\theta +\frac{v^r}{r}v^{\theta}-\mu(\Delta-\frac{1}{r^2})v^\theta=0, &\\
\partial_{t}v^{z}+v^r\partial_r  v^z +v^z \partial_z v_{z} +\partial_z p -\mu\Delta v_{z}=0.\\
\end{array} \right.
\end{equation}
By denoting 
\begin{equation*}
v=v^r\vec e_{r}+v^z\vec e_{z} \quad\mbox{and}\quad b=v^\theta\vec e_{\theta},
\end{equation*}
we discover that $(v,b)$ obyes the system
\begin{equation}\label{NS-mu}
\left\{\begin{array}{lll}
\partial_t v+v\cdot\nabla v-\mu\Delta v+\nabla p=-b\cdot\nabla b, &\\
\partial_t b+v\cdot\nabla b-\Delta b=-b\cdot\nabla v, &\\
\Div v=0,\quad\Div b=0.\tag{NS$_\mu$}
\end{array}
\right.
\end{equation}
We remark that there is a resseblamce between the MHD system \eqref{MHD(mu)} and \eqref{NS-mu}. Consequently, we can recover \eqref{MHD(mu)} from \eqref{NS-mu} by chanching the sign of the terms $b\cdot\nabla b$ and $b\cdot\nabla v$. Notice that the sign of $b\cdot\nabla v$ plays a crucial role in the a priori estimates for \eqref{MHD(mu)}.
\end{Rema}

\section{Study of a vorticity like equation}


This section concerns the treatment of some geometrical estimates of any solution carrying-out a vorticity like equation formulated as follows:
\begin{equation} \label{Eq-II-1}
\left\{ \begin{array}{ll}
\partial_{t}\zeta+(v\cdot\nabla)\zeta-\Delta\zeta=(\zeta\cdot\nabla)v+\Curl(B\cdot\nabla B),\\
\zeta_{\mid t=0}=\zeta^{0}.
\end{array} \right.
\end{equation}
Here, $\zeta=(\zeta_{1},\zeta_{2},\zeta_{3})$ is an unknown vector-valued function.  Our main result in this section reads as follows:
\begin{prop}\label{Prop-1} Let $v$ be an axisymmetric smooth vector field in divergence-free and $\zeta$ be  the solution of \eqref{Eq-II-1} with smooth initial data $\zeta^0.$ Then the following assertions hold.
\begin{itemize}
\item[{\bf(i)}]Assume that  $\Div\zeta_{0}=0$, then for every $t\in\RR_{+}$, we have $\Div\zeta(t,x)=0$.
\item[{\bf(ii)}]Assume that $\zeta^{0}=\zeta_{\theta}^0(r,z)\vec{e}_{\theta},$ then for every $t\in\RR_{+}$ we have $\zeta(t,x)=\zeta_{\theta}(t,r,z)\vec{e}_{\theta}$.

\item[{\bf(iii)}]Under assumption {\bf(ii)} we have $\zeta_{1}(t, x_{1}, 0, z)=\zeta_{2}(t, 0, x_{2}, z)$ and
\begin{equation*}
\partial_{t}\zeta_{\theta}+(v\cdot\nabla)\zeta_{\theta}-\Big(\partial_{r}^{2}+\partial_{z}^{2}+\frac{1}{r}\partial_{r}-\frac{1}{r^{2}}\Big)\zeta_{\theta}=\frac{v^{r}}{r}\zeta_{\theta}-\partial_z\frac{(B_{\theta})^2}{r}.
\end{equation*}
\end{itemize}
\end{prop}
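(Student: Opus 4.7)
The three items can be tackled by rather different mechanisms, and I would address them in order.

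For \textbf{(i)}, the natural move is to apply $\Div$ to \eqref{Eq-II-1}. The magnetic forcing is killed outright because $\Div \Curl \equiv 0$. Expanding the two convective terms by Leibniz and using $\Div v=0$ gives
$$
\Div(v\cdot\nabla\zeta) = v\cdot\nabla(\Div\zeta) + (\partial_j v_i)(\partial_i\zeta_j),\qquad \Div((\zeta\cdot\nabla)v)= (\partial_j\zeta_i)(\partial_i v_j),
$$
so, after relabeling indices, the two quadratic residues cancel. Hence $\Div\zeta$ solves the homogeneous transport-diffusion equation $\partial_t(\Div\zeta)+v\cdot\nabla(\Div\zeta)-\Delta(\Div\zeta)=0$ with vanishing initial data, and uniqueness for this linear parabolic problem forces $\Div\zeta\equiv 0$.

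For \textbf{(ii)} and \textbf{(iii)}, I would substitute the ansatz $\zeta(t,x)=\zeta_\theta(t,r,z)\vec{e}_\theta$ into \eqref{Eq-II-1} and check directly that every term remains in $\textnormal{span}(\vec{e}_\theta)$. Since $v$ is axisymmetric without swirl, $v=v^r\vec{e}_r+v^z\vec{e}_z$, one has $(v\cdot\nabla)\zeta=(v^r\partial_r+v^z\partial_z)\zeta_\theta\,\vec{e}_\theta$. The vector Laplacian of a purely azimuthal axisymmetric field is $\Delta\zeta=\big(\partial_r^2+\partial_z^2+\tfrac{1}{r}\partial_r-\tfrac{1}{r^2}\big)\zeta_\theta\,\vec{e}_\theta$, the $-1/r^2$ correction coming from the $\theta$-variation of $\vec{e}_\theta$. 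The stretching term is handled in Cartesian: writing $\zeta\cdot\nabla=(\zeta_\theta/r)\partial_\theta$ and differentiating $v_1=v^r\cos\theta,\;v_2=v^r\sin\theta,\;v_3=v^z$ yields $(\zeta\cdot\nabla)v=(v^r\zeta_\theta/r)\vec{e}_\theta$. Finally, the magnetic ansatz $B=B_\theta\vec{e}_\theta$ gives $(B\cdot\nabla)B=-((B_\theta)^2/r)\vec{e}_r$, whose curl (a purely radial axisymmetric vector field $F^r\vec{e}_r$ has curl $-\partial_z F^r\,\vec{e}_\theta$) is $-\partial_z((B_\theta)^2/r)\vec{e}_\theta$. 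Collecting the projections yields precisely the scalar evolution for $\zeta_\theta$ stated in (iii). This scalar parabolic equation admits a unique smooth solution issuing from the datum $\zeta_\theta^0$, and the resulting $\zeta_\theta(t,r,z)\vec{e}_\theta$ solves the full vector equation \eqref{Eq-II-1} with the prescribed initial data; by uniqueness for \eqref{Eq-II-1} it coincides with $\zeta$, which is (ii). The identity $\zeta_1(t,x_1,0,z)=\zeta_2(t,0,x_2,z)$ is then immediate from the Cartesian form $\zeta=(-\zeta_\theta\sin\theta,\zeta_\theta\cos\theta,0)$, both sides vanishing on the respective coordinate axes.

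The main obstacle is bookkeeping rather than analysis: one must correctly track derivatives of the cylindrical basis vectors (which generate the characteristic $1/r$ and $1/r^2$ terms) and invoke uniqueness for the linear parabolic Cauchy problem in the appropriate smooth class, treating $\Curl(B\cdot\nabla B)$ as a prescribed axisymmetric source. No delicate estimate is required at this stage; the scalar identity from (iii), coupled with the evolution equation for $B_\theta$, is what subsequently feeds systems \eqref{VE(mu-1)}--\eqref{VE(mu-2)} used to bound the vorticity.
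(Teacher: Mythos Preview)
Your argument for {\bf(i)} is essentially the paper's: apply $\Div$, note that the quadratic residues from $(v\cdot\nabla)\zeta$ and $(\zeta\cdot\nabla)v$ cancel (the paper leaves this implicit), and conclude by uniqueness/maximum principle for the homogeneous transport--diffusion equation.

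For {\bf(ii)} you take a genuinely different route. The paper proceeds in two explicit stages: first it shows that \eqref{Eq-II-1} is invariant under rotations $\mathcal{R}_\alpha$ about the $z$-axis, so that uniqueness forces the cylindrical components of $\zeta$ to be independent of $\theta$; then it writes down the coupled linear system satisfied by $(\zeta_r,\zeta_z)$, performs an $L^p$ energy estimate, and closes by Gronwall to obtain $\|\zeta_r(t)\|_{L^p}+\|\zeta_z(t)\|_{L^p}\le(\|\zeta_r^0\|_{L^p}+\|\zeta_z^0\|_{L^p})e^{2V(t)}=0$. Your approach instead verifies that the azimuthal ansatz is self-consistent, solves the resulting scalar equation, and invokes uniqueness for the full vector problem. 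This is cleaner and avoids the rotation-invariance computation and the $L^p$ estimates entirely; on the other hand, the paper's argument is more constructive and yields, as a byproduct, a quantitative stability estimate for the azimuthal structure under perturbations of the initial data. Both are valid; yours is the more economical path to the bare statement.

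For {\bf(iii)} the content is the same: the paper projects the vector equation onto $\vec{e}_\theta$ after establishing {\bf(ii)}, while you obtain the scalar equation as part of your consistency check in {\bf(ii)}. Your observation that both $\zeta_1(t,x_1,0,z)$ and $\zeta_2(t,0,x_2,z)$ vanish matches the paper's remark that this follows from $\zeta\wedge\vec{e}_\theta=0$.
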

\begin{proof} {\bf (i)} To reach this identity, apply the divergence operator $"\Div"$ to \eqref{Eq-II-1}. An elementay calculus, on account $\Div v=0$ and the algebreic identity $\Div(\Curl)=0$ provide us
\begin{equation}\label{Eq-II-2}
\left\{ \begin{array}{ll}
\partial_{t}\Div\zeta+(v\cdot\nabla)\Div\zeta-\Delta\zeta\Div\zeta=0,\\
\Div\zeta_{\mid t=0}=0.
\end{array} \right.
\end{equation}
So, the maximum principle leads to
\begin{equation*}
\Vert\Div\zeta\Vert_{L^{\infty}}\le\Vert\Div\zeta^{0}\Vert_{L^{\infty}},
\end{equation*}
this yields the aimed result.

{\bf (ii)} We designate by $(\zeta_{r}, \zeta_{\theta}, \zeta_{z})$ the coordinates of $\zeta$ in cylindrical basis $(\vec e_r,\vec e_\theta, \vec e_z)$. We perform the result in two steps: we claim first that the components of $\zeta$ does not rely upon on the angular variale $\theta$. Next, we claim that the components $\zeta_{r}$ and $\zeta_{z}$ vanish.

\hspace{0.5cm}We intend to check that \eqref{Eq-II-1} is invariant under rotation transform. Doing so, let us consider the rotation with angle $\alpha$ and axis $(oz)$, $\mathcal{R}_{\alpha}$ and defined by
\begin{equation*}
\mathcal{R}_{\alpha} =\left(
\begin{array}{ccc}
\cos \alpha& -\sin\alpha& 0 \\
\sin\alpha & \cos\alpha &  0 \\
0 & 0 & 1
\end{array}
\right),\quad \alpha\in\RR.
\end{equation*}
Setting ${\zeta}_{\alpha}(t, x)=\mathcal{R}_{\alpha}^{-1}\zeta{(t, \mathcal{R}_{\alpha}x)}$. We show that ${\zeta}_{\alpha}$ satisfies also \eqref{Eq-II-1}. The fact that the operator $\Delta$ commutes with rotations, then it follows
\begin{equation*}
\Delta(\zeta(t,\mathcal{R}_{\alpha}x))=(\Delta\zeta)(t,\mathcal{R}_{\alpha}x),
\end{equation*}
one deduce that
\begin{equation}\label{Eq-II-3}
\Delta\zeta_{\alpha}(t,x)=\mathcal{R}_{\alpha}^{-1}(\Delta\zeta)(t,\mathcal{R}_{\alpha}x).
\end{equation}
About the advection term $v\cdot\nabla\zeta$, we check that
\begin{equation*}
(\mathcal{R}_{\alpha}^{-1}(v\cdot\nabla\zeta))(t,\mathcal{R}_{\alpha}x)=(v\cdot\nabla\,\mathcal{R}_{\alpha}^{-1}\zeta)(t,\mathcal{R}_{\alpha}x).
\end{equation*}
Consequently,
\begin{equation*}
v(t,x)\cdot\nabla\,(\mathcal{R}_{\alpha}^{-1}\zeta(t,\mathcal{R}_{\alpha}x))=\big(\mathcal{R}_{\alpha}v(t,\mathcal{R}_{\alpha}^{-1}x)\cdot\nabla\mathcal{R}_{\alpha}^{-1}\zeta\big)(t,\mathcal{R}_{\alpha}(x)).
\end{equation*}
Since the velocity is axisymmetric vector field, thus we obtain
\begin{equation*}
v(t,x)\cdot\nabla\,\zeta_{\alpha}(t,x)=\big(v\cdot\nabla\mathcal{R}_{\alpha}^{-1}\zeta\big)(t,\mathcal{R}_{\alpha}(x))).
\end{equation*}
Join together the last two estimates, it hlods
\begin{equation}\label{Eq-II-4}
(\mathcal{R}_{\alpha}^{-1}(v\cdot\nabla\zeta))(t,\mathcal{R}_{\alpha}x)=v(t,x)\cdot\nabla\zeta_{\alpha}(t,x).
\end{equation}
For the stretching term $\zeta\cdot\nabla v$ step by step we find that
\begin{eqnarray}\label{Eq-II-5}
\nonumber(\mathcal{R}_{\alpha}^{-1}(\zeta\cdot\nabla v))(t,\mathcal{R}_{\alpha}x)&=&(\zeta\cdot\nabla\,\mathcal{R}_{\alpha}^{-1}v)(t,\mathcal{R}_{\alpha}x)\\
\nonumber&=&\big(\zeta\cdot\nabla\,v(t,\mathcal{R}_{\alpha}^{-1}x\big)(t,\mathcal{R}_{\alpha}x)\\
&=&\zeta_{\alpha}\cdot\nabla\,v(t,x).
\end{eqnarray}
Collecting \eqref{Eq-II-3}, \eqref{Eq-II-4} and \eqref{Eq-II-5} and insert them in \eqref{Eq-II-1} to infer that
\begin{equation*}
\partial_{t}\zeta_{\alpha}+v\cdot\nabla\zeta_{\alpha}-\mu\Delta\zeta_{\alpha}=\zeta_{\alpha}\cdot\nabla v+\Curl(B\cdot\nabla B) .
\end{equation*}
The fact that $\zeta_{\alpha}^0(x)=\zeta^0(x)$ and the uniqueness issue confirm that $\zeta_{\alpha}(t,x)=\zeta(t,x)$. Consequently, the components of $\zeta$ don't depend on the angle $\theta$, so have the first step is reached.

\hspace{0.5cm}Now, our task is to prove that the components $\zeta_{r}$ and $\zeta_{z}$ vanish. To do this, we write their equations and taking  the inner product in $\RR^3$ for $\zeta$'s equation with $\vec{e}_{r}$. Exploring the fact $v$ is axisymmetric, so a straightforward computations allow us to write
\begin{eqnarray*}
(v\cdot\nabla\zeta)\cdot\vec{e}_{r}&=&v_{r}\partial_{r}\zeta_{r}+v_{z}\partial_{z}\zeta_{r}\\
&=&v \cdot\nabla\zeta_{r},
\end{eqnarray*}
and
\begin{eqnarray*}
(\zeta\cdot\nabla v)\cdot\vec{e}_{r}&=&\zeta_{r}\partial_{r}v_{r}+\frac{1}{r}\zeta_{\theta}\partial_{\theta}v_{r}+\zeta_{z}\partial_{z}v_{r}\\
&=&\zeta_{r}\partial_{r}v_{r}+\zeta_{z}\partial_{z}v_{r}.
\end{eqnarray*}
For the dissipative term we have by definition and by the first step, 
\begin{eqnarray*}
\mu\Delta\zeta\cdot\vec{e}_{r}&=&\mu\Big(\partial_{r}^{2}\zeta\cdot\vec{e}_{r}+\frac{1}{r}\partial_{r}\zeta_{r}\cdot\vec{e}_{r}+\frac{1}{r^{2}}\partial_{\theta}^{2}\zeta\cdot\vec{e}_{r}+\partial_{z}^{2}\zeta\cdot\vec{e}_{r}\Big)\\
&=&\mu\Big(\partial_{r}^{2}(\zeta\cdot\vec{e}_{r})+\frac{1}{r}\partial_{r}(\zeta\cdot\vec{e}_{r})+\frac{1}{r^{2}}\Big(\partial_{\theta}^{2}(\zeta\cdot\vec{e}_{r})+(\zeta\cdot\vec{e}_{r})\\
&&-2\partial_{\theta}\zeta\cdot\vec{e}_{\theta}\Big)+\partial_{z}^{2}(\zeta\cdot\vec{e}_{r})\Big)\\
&=&\mu\Big(\partial_{r}^{2}\zeta_{r}+\frac{1}{r}\partial_{r}\zeta_{r}+\frac{1}{r^{2}}\partial_{\theta}^{2}\zeta_{r}-\frac{1}{r^{2}}\zeta_{r}-\frac{2}{r^2}\partial_{\theta}\zeta_{\theta}+\partial_{z}^{2}\zeta_{r}\Big)\\
&=&\mu(\Delta-\frac{1}{r^{2}})\zeta_{r}.
\end{eqnarray*}
For the last term $\Curl(B\cdot\nabla B)$, we combine \eqref{special-structure} and $\Curl(B\cdot\nabla B)=-\partial_z\big(\frac{B^\theta}{r}B\big)$ to obtain
\begin{equation*}
\Curl(B\cdot\nabla B)\cdot\vec e_r=0.
\end{equation*}
Finally, we discover that $\zeta_r$ obeys the following Cauchy problem:
\begin{equation}\label{Eq-II-6}
\left\{ \begin{array}{ll}
\partial_{t}\zeta_{r}+v\cdot\nabla\zeta_{r}-\mu(\Delta-\frac{1}{r^{2}})\zeta_{r}=\zeta_{r}\partial_{r}v_{r}+\zeta_{z}\partial_{z}v_{r},\\
{\zeta_{r}}_{\vert t=0}=\zeta_{r}^0.
\end{array} \right.
\end{equation}
Similarly, we can find that the component $\zeta_{z}$ satisfies the following equation:
\begin{equation}\label{Eq-II-7}
\left\{ \begin{array}{ll}
\partial_{t}\zeta_{z}+v\cdot\nabla\zeta_{z}-\mu\Delta\zeta_{z}=\zeta_{r}\partial_{r}v_{z}+\zeta_{z}\partial_{z}v_{z},\\
{\zeta_{z}}_{\vert t=0}=0.
\end{array} \right.
\end{equation}
At this stage, we develop and $L^p$ estimate for $\zeta_r$ (resp. $\zeta_z$) by multiplying \eqref{Eq-II-6} (resp. \eqref{Eq-II-7}) by $\vert\zeta_{r}\vert^{p-2}\zeta_{r}$ (resp. $\vert\zeta_{r}\vert^{p-2}\zeta_{r}$). Thereafter, integrating by parts over $\RR^3$, account $\Div v=0$ and H\"older's inequality give
\begin{eqnarray*}
\frac{1}{p}\frac{d}{dt}\Vert\zeta_{r}(t)\Vert_{L^{p}}^{p}&+&\mu (p-1)\int_{\mathbb{R}^3}|\nabla\zeta_{r}|^2|\zeta_{r}|^{p-2}dx+\mu\int_{\mathbb{R}^3}\frac{|\zeta_{r}|^p}{r^2}dx\\
&\le&\int_{\mathbb{R}^{3}}|\zeta_{r}|^{p}\partial_{r}v_{r}dx
+\int_{\mathbb{R}^{3}}\Sigma_{z}|\zeta_{r}|^{p-2}\zeta_{r}\partial_{z}v_{r}dx\\
&\leq&\Big(\Vert\zeta_{r}\Vert_{L^{p}}^{p}+\Vert\zeta_{z}\Vert_{L^{p}}\Vert\zeta_{r}\Vert_{L^{p}}^{p-1}
\Big)\Vert\nabla v\Vert_{L^{\infty}},
\end{eqnarray*}
Therefore
\begin{equation*}
\Vert\zeta_{r}(t)\Vert_{L^{p}}\leq\Vert\zeta_{ r}^0\Vert_{L^{p}}+\int_{0}^{t}\big(\Vert\zeta_{r}(\tau)\Vert_{L^{p}}+\Vert\zeta_{z}(\tau)\Vert_{L^{p}}\big)\Vert\nabla v(\tau)\Vert_{L^{\infty}}d\tau,
\end{equation*}
respectively
\begin{equation*}
\Vert\zeta_{z}(t)\Vert_{L^{p}}\leq\Vert\zeta_{ z}^0\Vert_{L^{p}}+\int_{0}^{t}\big(\Vert\zeta_{r}(\tau)\Vert_{L^{p}}+\Vert\zeta_{z}(\tau)\Vert_{L^{p}}\big)\Vert\nabla v(\tau)\Vert_{L^{\infty}}d\tau.
\end{equation*}
Putting together the last two estimates, Gronwall's inequality asserts for every $p\in[2,\infty]$ that
\begin{equation*}
\Vert\zeta_{r}(t)\Vert_{L^{p}}+\Vert\zeta_{z}(t)\Vert_{L^{p}}\leq\Big(\Vert\zeta_{r}^0\Vert_{L^{p}}+\Vert\zeta_{z}^0\Vert_{L^{p}}\Big)e^{2\int_{0}^t\|\nabla v(\tau)\|_{L^\infty}d\tau}.
\end{equation*}
Since $\zeta_{r}^0=\zeta_{z}^0=0$ then we deduce that  $\zeta_{r}(t)=\zeta_{z}(t)=0$ for all $t\in\mathbb{R}_{+}$, so {\bf(ii)} is then achieved.  

{\bf (iii)} We emphasize that the first statement is a direct consequence of $\zeta\wedge\vec{e}_{\theta}=\vec{0}.$ Next, we will claim the exact equation that governs the angular component $\zeta_{\theta}$. For this aim, we develop an analog calculus as above to clear that 
\begin{eqnarray*}
\Delta\zeta\cdot\vec{e}_{\theta}&=&\Delta(\zeta_{\theta}\vec{e}_{\theta})\cdot\vec{e}_{\theta}\\
&=&\Delta\zeta_{\theta}-\frac{\zeta_{\theta}}{r^2},
\end{eqnarray*}
and
\begin{eqnarray*}
(v\cdot\nabla\zeta)\cdot\vec{e}_{\theta}=v\cdot\nabla\zeta_{\theta},\quad
(\zeta\cdot\nabla v)\cdot\vec{e}_{\theta}=\frac{v_{r}}{r}\zeta_{\theta}.
\end{eqnarray*}
Thanks to \eqref{Eq-II-1}, the angular components $\zeta_{\theta}$ evolves
\begin{equation}\label{m9}
\left\{ \begin{array}{ll}
\partial_{t}\zeta_{\theta}+v\cdot\nabla\zeta_{\theta}-\mu(\Delta-\frac{1}{r^2})\zeta_{\theta}=\frac{v_{r}}{r}\zeta_{\theta}-\partial_z\frac{(B_{\theta})^2}{r} ,
\\
{\zeta_{\theta}}_{\vert t=0}=\zeta_{\theta}^{0},
\end{array} \right.
\end{equation}
 which accomplish the proof of the proposition.
\end{proof}

For another properties of the vorticity in the axisymmetric context we state the following proposition, which its proof can be found  for example in \cite{AKH}.
\begin{prop}\label{prop3} 
Assume that $v$ is an axisymmetric vector field in divergence-free and $\omega=\nabla\land v$ its vorticity. Then the following properties hold true
\begin{itemize}
\item[{\bf(i)}]  The vector $\omega$ satisfies
\begin{equation*}
\vec{\omega}\land\vec{e}_{\theta}=\vec{0}.
\end{equation*}
\end{itemize}
In particular, for every $(x_{1}, x_{2}, z)$ in $\mathbb{R}^{3}$ we have
\begin{equation*}
\omega_{3}=0, \quad \omega_{1}(x_{1}, 0, z)=\omega_{2}(0, x_{2}, z)=0.
\end{equation*}
\begin{itemize}
\item[{\bf(ii)}] For every $q\ge-1, \;\Delta_{q}v$ is axisymmetric and
\begin{equation*}
\Delta_{q}\omega\wedge\vec{e}_{\theta}=\vec{0}.
\end{equation*}
\end{itemize}
\end{prop}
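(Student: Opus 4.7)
The plan is to verify (i) by a direct calculation in cylindrical coordinates and then to deduce (ii) by transferring the symmetry to $\Delta_q v$ via the rotation invariance of the Littlewood--Paley operators. In the setting of the paper, $v$ is axisymmetric without swirl, so $v=v^r(r,z)\vec e_r+v^z(r,z)\vec e_z$, and the usual formulae for the curl in cylindrical coordinates give
\[
\omega^r=-\partial_z v^\theta=0,\qquad \omega^z=\tfrac{1}{r}\partial_r(rv^\theta)=0,
\]
while only $\omega^\theta=\partial_z v^r-\partial_r v^z$ survives. Hence $\omega=\omega^\theta\vec e_\theta$, which is exactly $\omega\wedge\vec e_\theta=\vec 0$. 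The Cartesian consequences then drop out of \eqref{(1.7)}: since $\vec e_\theta\cdot\vec e_z=0$ we get $\omega_3\equiv 0$; on $\{x_2=0\}$ one has $\vec e_\theta=\pm\vec e_2$, hence $\omega_1(x_1,0,z)=0$; symmetrically $\omega_2(0,x_2,z)=0$.

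For (ii), the key observation is that each $\Delta_q$ is a Fourier multiplier whose symbol $\varphi(2^{-q}\xi)$ (respectively $\chi(\xi)$ for $q=-1$) depends only on $|\xi|$, hence commutes, componentwise in the Cartesian frame, with every rotation in $SO(3)$. Concretely, denoting $v_\alpha(x)\triangleq \mathcal{R}_\alpha^{-1}v(\mathcal{R}_\alpha x)$ for a rotation $\mathcal{R}_\alpha$ about the $z$-axis, one obtains $\Delta_q v_\alpha=(\Delta_q v)_\alpha$, so the axisymmetry identity $v_\alpha=v$ propagates to $\Delta_q v$. The same argument applied to the reflection $(x_1,x_2,x_3)\mapsto(x_1,-x_2,x_3)$ shows that the no-swirl property, which is equivalent (given axisymmetry) to the additional $\mathbb{Z}/2$-invariance of $v$ under this reflection, is also inherited by $\Delta_q v$. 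Moreover $\Div(\Delta_q v)=\Delta_q(\Div v)=0$ and $\Curl(\Delta_q v)=\Delta_q\omega$, so part (i) applied to the axisymmetric, divergence-free, swirl-free field $\Delta_q v$ yields precisely $\Delta_q\omega\wedge\vec e_\theta=\vec 0$.

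The curl computation is essentially bookkeeping, so the only real conceptual point lies in (ii): making sure that not merely axisymmetry but also the no-swirl condition survives the dyadic localization. This is exactly why the reflection symmetry must be invoked alongside the rotation symmetry, and both commutations rely on the standard choice of a \emph{radial} cut-off $\chi$ in the Littlewood--Paley construction; I would make this radiality convention explicit at the start of the argument so that the reduction of (ii) to (i) is clean.
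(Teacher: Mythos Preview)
Your argument is correct. The paper does not supply its own proof of this proposition but refers to \cite{AKH}, so there is no in-paper argument to compare against; your approach---direct cylindrical computation for (i), followed by radial-symbol commutation with rotations and with the reflection $(x_1,x_2,x_3)\mapsto(x_1,-x_2,x_3)$ to carry the no-swirl condition through $\Delta_q$ for (ii)---is the standard one and matches what one finds in the cited reference. Your remark that the radiality of $\chi$ should be made explicit is well taken, since the paper's phrase ``monotonically decaying along rays'' does not by itself force $\chi$ to be radial.
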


\section{A priori estimate}
This section comprises three important parts. The first one addresses to the energy estimates for different quantities like $v, B$ and $\frac{B_{\theta}}{r}$,\ldots, in various functional spaces. The second part, cares with an axisymmetric estimates for the same quantities, in particular, the vorticity $\omega$ in $L^\infty-$space and $\frac{v^r}{r}$ in Lorentz space $L^{3,1}$. In the third part we focus on the Lipschitz norm of the velocity by employing a special geometric decomposition of the vorticity.   

\hspace{0.5cm}In the sequel we will agree the following notation: we denote by $\Phi_k(t)$ any function of the form
$$ \Phi_k(t) = C_0  \underbrace{\exp(\ldots\exp}_{k\text{ times}}(C_0(  t^{\frac{5}{4}}))\ldots),$$
where $C_0$ depends on the involved norms of the initial data and its value varies from line up some absolute constants, we will make  an intensive use (without mentioning it) of the following trivial facts  
$$\int_0^t \Phi_k(\tau)d\tau \leq \Phi_k(t) \quad \textnormal{and}  \quad \exp\Big(  \int_0^t \Phi_k(\tau)d\tau\Big) \leq \Phi_{k+1}(t)  .$$
\subsection{Energy estimates }
\begin{prop}\label{prop:4.1}
Let $(v,B)$ be a smooth solution of \eqref{MHD(mu)}. The following assertions hold.
\begin{enumerate}
\item[{\bf(i)}] For $(v^0, B^0)\in L^2 \times L^2 $ and $t \geq 0$,  we have: 
\begin{equation*}
\|v(t)\|^2_{L^2} + \|B(t)\|^2_{L^2} + 2\mu \|\nabla v\|^2_{L^2_t L^2} + 2 \|\nabla B\|^2_{L^2_t L^2} = \|v^0\|^2_{L^2}+ \|B^0\|^2_{L^2}.
\end{equation*} 
\item[{\bf(ii)}] Assume that $(v,B)$ is an axisymmetric solution of        \eqref{MHD(mu)}. Then for every $t \geq 0$, we have:
\begin{equation*}
\bigg{\|} \frac{B_\theta}{r} \bigg{\|}^2_{L^{\infty}_{t} L^2} + \bigg{\|}\nabla \bigg{(}\frac{B_\theta}{r} \bigg{)} \bigg{\|}^2_{L^{2}_{t} L^2} \le \bigg{\|} \frac{{B^0_\theta}}{r} \bigg{\|}^2_{L^2}.
\end{equation*}
\item[{\bf(iii)}] Assume that $(v,B)$ is an  axisymmetric solution of     \eqref{MHD(mu)}. Then for every $p \in (1,\infty] ,q \in [1,\infty] $ and $ t \geq 0$,
\begin{equation*}
\bigg{\|} \frac{B_\theta}{r} \bigg{\|}_{L^\infty_t L^{p,q}} \le \bigg{\|} \frac{{B^0_\theta}}{r} \bigg{\|}_{L^{p,q}}.
\end{equation*}
\item[{\bf(iv)}] Assume that $(v,B)$ is an  axisymmetric solution of     \eqref{MHD(mu)} such that $(v^0, B^0)\in L^2 \times L^2  $ and $\frac{{B^0_\theta}}{r} \in L^m $ with $m>6.$ Then, 
\begin{itemize}
\item[{\bf(iv.a)}] for every $t > 0$, we have: 
\begin{equation*}
\| B_\theta(t) \|_{L^\infty}\le C_{0}\big( t^{-\frac{3}{4}}+t^{\frac{1}{4}} \big).
\end{equation*}
\item[{\bf(iv.b)}]  for every $p \in(2,\infty]$, and  $t\geq 0$, we have: 
\begin{equation*}
\| B_\theta \|_{L^{1}_{t} L^p}\le C_0 \big( t^{\frac{1}{4}}+t^{\frac{5}{4}} \big).
\end{equation*}
\end{itemize}
\end{enumerate}
\end{prop}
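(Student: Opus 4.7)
For \textbf{(i)}, I would take the $L^2(\RR^3)$-inner products of the $v$- and $B$-equations with $v$ and $B$ respectively and add. Pressure and convection terms vanish by $\Div v = \Div B = 0$; the dissipative parts produce the $2\mu\|\nabla v\|_{L^2_tL^2}^2$ and $2\|\nabla B\|_{L^2_tL^2}^2$ contributions; and the nonlinear coupling cancels via the classical identity
\begin{equation*}
\int_{\RR^3}(B\cdot\nabla B)\cdot v\,dx + \int_{\RR^3}(B\cdot\nabla v)\cdot B\,dx = \int_{\RR^3} B\cdot\nabla(v\cdot B)\,dx = 0,
\end{equation*}
using $\Div B = 0$. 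Time integration yields the identity.

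For \textbf{(ii)} and \textbf{(iii)}, I would exploit the scalar equation for $\Sigma = B_\theta/r$ established in Section~\ref{Axisymmetric flows}:
\begin{equation*}
\partial_t\Sigma + v\cdot\nabla\Sigma - \bigl(\Delta + \tfrac{2}{r}\partial_r\bigr)\Sigma = 0.
\end{equation*}
Testing with $|\Sigma|^{p-2}\Sigma$, the transport vanishes by $\Div v = 0$; the Laplacian contributes $-(p-1)\int|\Sigma|^{p-2}|\nabla\Sigma|^2\,dx$; and the extra drift, after switching to cylindrical coordinates $dx = r\,d\theta\,dr\,dz$, collapses to the non-positive boundary term
\begin{equation*}
\int_{\RR^3}|\Sigma|^{p-2}\Sigma\,\tfrac{2}{r}\partial_r\Sigma\,dx = -\tfrac{4\pi}{p}\int_\RR|\Sigma(0,z)|^p\,dz \leq 0.
\end{equation*}
For $p=2$ this yields (ii); for $p\in(1,\infty)$ it gives $\|\Sigma(t)\|_{L^p}\leq\|\Sigma^0\|_{L^p}$, and the case $p=\infty$ follows by passing to the limit (maximum principle). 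Since the solution map $\Sigma^0\mapsto\Sigma(t)$ is thus $L^p$-contractive for each $p\in(1,\infty]$, real interpolation $L^{p,q} = [L^{p_0},L^{p_1}]_{\theta,q}$ extends contraction to Lorentz spaces, proving (iii).

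For \textbf{(iv)}, the starting observation is that $B = B_\theta\vec e_\theta$ combined with (i) gives $\|B_\theta\|_{L^\infty_tL^2}\leq\|B^0\|_{L^2}\leq C_0$. The scalar equation for $B_\theta$ derived in Section~\ref{Axisymmetric flows},
\begin{equation*}
\partial_t B_\theta + v\cdot\nabla B_\theta - \bigl(\Delta - \tfrac{1}{r^2}\bigr)B_\theta = v^r\Sigma,
\end{equation*}
is a transport–diffusion equation with source. Since $-\Delta + 1/r^2 \geq -\Delta$ as a positive self-adjoint operator, the associated semigroup enjoys at least the three-dimensional heat-kernel $L^2\to L^\infty$ smoothing of rate $t^{-3/4}$. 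Writing Duhamel's formula, the free part contributes $C_0 t^{-3/4}\|B_\theta^0\|_{L^2}$, while the source $v^r\Sigma$—bounded in a suitable mixed Lebesgue space by Hölder using $v\in L^\infty_tL^2\cap L^2_t\dot H^1$ from (i) and $\Sigma\in L^\infty_tL^m$ with $m>6$ from (iii)—yields after integration against the heat kernel the complementary $C_0 t^{1/4}$ growth, producing (iv.a). Then (iv.b) follows by integrating (iv.a) in time for $p = \infty$, and for $p\in(2,\infty)$ by interpolating $\|B_\theta\|_{L^p}\leq\|B_\theta\|_{L^2}^{2/p}\|B_\theta\|_{L^\infty}^{1-2/p}$ with the $L^\infty_tL^2$ bound, checking that all resulting time-exponents are dominated by $t^{1/4}+t^{5/4}$.

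The main obstacle is \textbf{(iv.a)}: one has to combine the heat-type smoothing of the full operator $\partial_t + v\cdot\nabla - (\Delta - 1/r^2)$ with Duhamel control of the transport $v\cdot\nabla B_\theta$ and the bilinear source $v^r\Sigma$, while tuning integrability exponents against the available energy and $L^m$-propagation (the threshold $m>6$ enters precisely here) so as to hit the sharp profile $t^{-3/4}+t^{1/4}$ uniformly in $\mu$. Parts (i)--(iii), by contrast, are fairly routine once the correctly normalized scalar unknown $\Sigma = B_\theta/r$ has been isolated.
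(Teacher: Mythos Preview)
Your proof of \textbf{(i)} is essentially identical to the paper's: both take the $L^2$ inner product of each equation, integrate by parts, and observe that the coupling terms $\int(B\cdot\nabla B)\cdot v$ and $\int(B\cdot\nabla v)\cdot B$ cancel by $\Div B=0$.

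For \textbf{(ii)}--\textbf{(iv)} the paper does not give a proof at all; it simply refers to \cite{Hassainia}, Proposition~4.1. Your sketches therefore go beyond what the paper itself contains. Parts~(ii) and~(iii) are correct and standard: testing the $\Sigma$--equation against $|\Sigma|^{p-2}\Sigma$ and computing the sign of the $\tfrac{2}{r}\partial_r$ contribution is exactly the mechanism, and real interpolation then transfers the $L^p$ contraction to $L^{p,q}$.

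For \textbf{(iv.a)} your outline is plausible but two points need attention. First, the operator comparison $-\Delta+1/r^{2}\ge -\Delta$ gives $L^2\to L^2$ decay but does not by itself imply the $L^2\to L^\infty$ heat-kernel rate $t^{-3/4}$; pointwise kernel bounds for this singular Schr\"odinger operator require a separate argument. Second, and more seriously, you invoke $v\in L^2_t\dot H^1$ ``from~(i)'', but the energy identity only yields $\mu\|\nabla v\|_{L^2_tL^2}^2\le C_0$, so this bound blows up as $\mu\to 0$ and cannot be used uniformly in the viscosity. The uniformly available information on $v$ is only $L^\infty_tL^2$, which forces the Duhamel estimate of the source $v^r\Sigma$ to run through $L^\infty_tL^{2m/(m+2)}$; the condition $m>6$ is precisely what makes the resulting time integral $\int_0^t(t-s)^{-3(m+2)/(4m)}\,ds$ converge. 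You are right to flag (iv.a) as the crux, but the use of $L^2_t\dot H^1$ for $v$ should be dropped.
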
 
\begin{proof}We will establish just {\bf(i)}. The proof of other assertions can be found in details in Proposition 4.1 of  \cite{Hassainia}. Performing $L^2$ inner product for the first equation of \eqref{MHD(mu)}, integrating by parts and using the fact that $\Div v=0$, then we have
$$\frac{d}{dt}\|v(t)\|^2_{L^2} + 2\mu\int_{\RR^3} |\nabla v(t,x)|^2dx  = -\int_{\RR^3} (B\cdot\nabla v) \cdot B  dx.$$
Similarly, an $L^2-$estimate of $B-$equation gives
$$\frac{d}{dt}\|B_{\mu}(t)\|^2_{L^2} +2\int_{\RR^3} |\nabla B(t,x)|^2dx  = \int_{\RR^3} (B\cdot\nabla v) \cdot B  dx.$$
Collecting the last two estimates, yields 
 \begin{equation*}
\frac{d}{dt}\|v(t)\|^2_{L^2} +\frac{d}{dt}\|B(t)\|^2_{L^2} + 2\mu \|\nabla v\|^2_{ L^2} + 2 \|\nabla B\|^2_{L^2} = 0.
\end{equation*} 
Finally integrating  the above estimate with respect to time we find the desired estimate.  
\end{proof}
\subsection{Axisymmetric estimates }\label{Axi- estimates}

\begin{prop}\label{prop:4.2} Let $\sigma \in(3,\infty)$ and $(v,B)$ be an axisymmetric smooth solution of \eqref{MHD(mu)} in divergence-free. Then the following assertions are hold.
\begin{enumerate}
\item[{\bf(i)}] For $(v^0,B^0) \in  L^2 \times L^2, \;(\omega^0_\theta /r,B^0_{\theta} /r) \in  L^{3,1} \times (L^{2}\cap L^\infty) $ and $t \ge 0$, we have:
$$\Big\|\frac{\omega}{r}\Big\|_{L^{\infty}_{t} L^{3,1}}+\Big\| \frac{B_{\theta}}{r}\Big\|_{L^{1}_{t}\mathcal{B}^{\frac{3}{2}}_{2,1}} +\Big\Vert\frac{v_{r}}{r}\Big\Vert_{L^\infty_t L^{\infty}} \le \Phi_1(t).$$
\item[{\bf(ii)}] For $(v^0,B^0,\omega^0) \in  L^2 \times \big(L^2\cap \mathcal{B}^{\frac{3}{\sigma}-1}_{\sigma,1}\big) \times L^\infty,\;(\omega^{0}_{\theta} /r, B^0_{\theta} /r) \in  L^{3,1} \times L^{2}\cap L^\infty $ and $t \ge 0$, we have:
$$\Vert  B \Vert_{L^{\infty}_{t} \mathcal{B}^{\frac{3}{\sigma}-1}_{\sigma,1}}+ \Vert  B \Vert_{L^{1}_{t} \mathcal{B}^{\frac{3}{\sigma}+1}_{\sigma,1}} +\Vert\omega\Vert_{L^{\infty}_{t}  L^{\infty}} +\Vert  B \Vert_{L^{2}_{t} \mathcal{B}^{0}_{\infty,1}}  \le \Phi_2(t).$$
\item[{\bf(iii)}] For $(v^0,B^0,\omega^0) \in  (L^2\cap L^\infty \times \big(L^2\cap \mathcal{B}^{\frac{3}{\sigma}-1}_{\sigma,1}\big)  \times L^\infty,\;(\omega^{0}_{\theta} /r  ,B^0_{\theta} /r) \in  L^{3,1} \times (L^{2}\cap L^{\infty}) $ and  $t \ge 0$, we have: 
$$\Vert v\Vert_{L^{\infty}_{t}L^{\infty}}\le  \Phi_3(t).$$
\end{enumerate}
\end{prop}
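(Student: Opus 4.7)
The plan is to deduce the $L^\infty$ bound on $v$ from the estimates already established in parts (i)--(ii) by means of a dyadic Bernstein / Biot--Savart interpolation. Writing $v=\sum_{q\ge-1}\Delta_q v$ and splitting at a threshold $N\in\NN$ to be chosen, I would use Bernstein on the low frequencies,
\begin{equation*}
\|\Delta_q v\|_{L^\infty}\le C\,2^{\tfrac{3q}{2}}\|\Delta_q v\|_{L^2},
\end{equation*}
together with the Biot--Savart identity $-\Delta v=\Curl\,\omega$ (valid since $\Div v=0$) on the high frequencies,
\begin{equation*}
\|\Delta_q v\|_{L^\infty}\le C\,2^{-q}\|\Delta_q\omega\|_{L^\infty}\quad(q\ge 0).
\end{equation*}
Summing the two resulting geometric series and optimising in $N\sim\tfrac{2}{5}\log_2\!\big(\|\omega\|_{L^\infty}/\|v\|_{L^2}\big)$ produces the classical $3$-D interpolation
\begin{equation*}
\|v\|_{L^\infty}\le C\,\|v\|_{L^2}^{2/5}\,\|\omega\|_{L^\infty}^{3/5}+C\,\|v\|_{L^2}.
\end{equation*}

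With this inequality in hand, the estimate is immediate from norms that are already under control. The energy identity of Proposition~\ref{prop:4.1}(i) supplies $\|v\|_{L^\infty_t L^2}\le C_0$ \emph{uniformly in $\mu$}, and Proposition~\ref{prop:4.2}(ii) furnishes the viscosity-free bound $\|\omega\|_{L^\infty_t L^\infty}\le \Phi_2(t)$. Substituting and absorbing constants into the nested-exponential hierarchy yields
\begin{equation*}
\|v(t)\|_{L^\infty}\le C\bigl(C_0^{2/5}\,\Phi_2(t)^{3/5}+C_0\bigr)\le \Phi_3(t),
\end{equation*}
which is exactly the desired bound.

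The interpolation used above is a generic feature of $3$-D divergence-free fields and does not itself involve the axisymmetric structure, so the genuine analytic work has all been done upstream in part (ii), whose proof must produce an $L^\infty$ control on $\omega$ that is uniform in $\mu$ in the presence of the magnetic stretching term $-\partial_z\!\big((B^\theta)^2/r\big)$ appearing in \eqref{VE(mu-1)}. Step (iii) itself poses no serious obstacle; the only care is to check that both inputs ($\|v\|_{L^2}$ and $\|\omega\|_{L^\infty}$) remain uniform in the viscosity, which is the case by design of (i)--(ii). The hypothesis $v^0\in L^\infty$ figuring in the statement of (iii) is not actually used by this argument --- it is inherited from the hypotheses used to prove the earlier steps --- so the conclusion is in essence a corollary of parts (i) and (ii).
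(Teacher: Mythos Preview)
Your argument for part (iii) is correct, and it takes a genuinely different route from the paper. The paper proves (iii) by Serfati's \emph{dynamic} method: it decomposes $v$ via the homogeneous Littlewood--Paley operator $\dot S_{-N}$, writes an evolution equation $(\partial_t-\mu\Delta)\dot S_{-N}v=\dot S_{-N}\mathbb{P}(B\cdot\nabla B-v\cdot\nabla v)$, estimates the low-frequency piece by $\|v^0\|_{L^\infty}+2^{-N}\int_0^t\|v\|_{L^\infty}^2+\int_0^t\|B\|_{L^\infty}^2$, combines with the high-frequency bound $2^{N}\|\omega\|_{L^\infty}$, optimises in $N$, and closes with Gronwall. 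This is why the hypothesis $v^0\in L^\infty$ appears in the statement: the paper's proof genuinely uses it, and it also needs the control $\|B\|_{L^2_t\mathscr{B}^0_{\infty,1}}\le\Phi_2(t)$ from (ii) to handle $\int_0^t\|B\|_{L^\infty}^2$.

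Your \emph{static} interpolation $\|v\|_{L^\infty}\lesssim\|v\|_{L^2}^{2/5}\|\omega\|_{L^\infty}^{3/5}+\|v\|_{L^2}$ bypasses all of that: no evolution equation, no Gronwall, no need for $v^0\in L^\infty$ or the $\mathscr{B}^0_{\infty,1}$ bound on $B$, and in fact it yields a bound of order $\Phi_2(t)$ rather than $\Phi_3(t)$. One small correction: the hypothesis $v^0\in L^\infty$ is \emph{not} ``inherited from the hypotheses used to prove the earlier steps'' --- parts (i) and (ii) do not assume it; it is an extra assumption in (iii) that your argument shows to be superfluous. The paper's Serfati approach has the merit of being robust in settings where no finite-energy information is available (e.g.\ purely $L^\infty$-based frameworks), but in the present setting your argument is both shorter and sharper.
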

\begin{proof} {\bf (i)} Setting $\Omega=\frac{\omega_{\theta}}{r}$ and $\Sigma=\frac{B_{\theta}}{r}$. We check that $\Omega$ satisfies the following inhomogeneous parabolic equation:
\begin{equation*}
\partial_{t}\Omega+v\cdot\nabla\Omega-\mu(\Delta+\frac{2}{r}\partial_{r})\Omega=-\partial_{z}\Sigma^2  .
\end{equation*}
The dissipative term has a good sign and thus we have for all $p\in[1,\infty]$ (see, also \cite{Ukhovskii-Yudovich})
\begin{equation*}
\|\Omega(t)\|_{L^p}\leq\|\Omega^{0}\|_{L^p}+ \int_0^t \|\partial_z\Sigma^2(\tau)\|_{L^p}d\tau.
\end{equation*}
By a real interpolation we get for $1<p<\infty$ and $q\in[1,\infty],$
\begin{equation*}
\|\Omega(t)\|_{L^{p,q}}\leq\|\Omega^{0}\|_{L^{p,q}}+ \int_0^t \|\partial_z\Sigma^2(\tau)\|_{L^{p,q}}d\tau.
\end{equation*}
So, for $p=3,q=1$, combine $\partial_z\Sigma^2=2\Sigma \partial_z \Sigma $ with {\bf(iii)-}Definition \ref{Def:7.1} yield
 \begin{equation}
\|\Omega(t)\|_{L^{3,1}}\leq\|\Omega^{0}\|_{L^{3,1}}+ 2\int_0^t \|\Sigma(\tau)\|_{L^{\infty}} \|\partial_z\Sigma(\tau)\|_{L^{3,1}}d\tau.
\end{equation}
Since $\|\Sigma(\tau)\|_{L^{\infty}} \le \|\Sigma^0\|_{L^{\infty}} $, so from {\bf{(iii)}}-Proposition \ref{prop:4.1}, embedding $ {\mathcal{B}^{\frac{3}{2}-1}_{2,1}} \hookrightarrow L^{3,1}$ and the continuity of $\nabla :\mathcal{B}^{\frac{3}{2}}_{2,1} \rightarrow \mathcal{B}^{\frac{3}{2}-1}_{2,1}$, we get 
 \begin{equation}\label{Eq:4.10}
\|\Omega(t)\|_{L^{3,1}}\leq\|\Omega^{0}\|_{L^{3,1}}+ 2\|\Sigma^0\|_{L^{\infty}}\int_0^t  \|\Sigma(\tau)\|_{\mathcal{B}^{\frac{3}{2}}_{2,1}}d\tau.
\end{equation}
To treat the term $ \|\Sigma\|_{L^1_t\mathcal{B}^{\frac{3}{2}}_{2,1}}$, we localize in frequency the $\Sigma-$equation by taking $\Sigma_q= \Delta_q \Sigma$  for $q\in \mathbb{N}$. Then we obtain
$$ \Big(\partial_t +v \cdot \nabla -\big(\Delta+\frac{2}{r}\partial_{r}\big)\Big) \Sigma_q = -[\Delta_q,v \cdot \nabla ]\Sigma. $$
Multiplying the above equation by $|\Sigma_q|^{p-2}\Sigma_q$, integrating over $\RR^3$. On account $\Div v=0$ and H\"older's inequality, we find 
\begin{equation*}
\frac{1}{p}\frac{d}{dt}\|\Sigma_q(t) \|^p_{L^p} - \int_{\RR^3} (\Delta  \Sigma_q ) |\Sigma_q|^{p-2}\Sigma_q  dx \le\|\Sigma_q(t) \|^{p-1}_{L^p}   \| [\Delta_q,v\cdot \nabla]\Sigma(t) \|_{L^{p}}  .
\end{equation*}
The fact  $-\int_{\RR^3} \frac{\partial_r \Sigma_q}{r}   |\Sigma_q|^{p-2}\Sigma_q dx \geq0  $ and the generalized Bernstein's inequality, see \cite{Chen.Miao.Zhang, Danchin},
\begin{equation}\label{Bs-G}
- \int_{\RR^3} (\Delta f_q ) |f_q|^{p-2}f_q  dx \ge
\left\{\begin{array}{ll}
c2^{2q} \| f_q(t)\|^p_{L^p} \quad& \textrm{if $q \ge 0,$}  \\ \quad  0    &\textrm{if $q =-1$}.
\end{array}
\right.
\end{equation}
give 
$$ \frac{d}{dt} \| \Sigma_q(t) \|_{L^p} +c 2^{2q}\| \Sigma_q(t) \|_{L^p}  \lesssim   \| [\Delta_q,v\cdot \nabla]\Sigma(t) \|_{L^{p}}  .$$
It follows 
$$\frac{d}{dt} \Big( e^{c t2^{2q}} \| \Sigma_q(t) \|_{L^p}\Big) v\lesssim e^{c t2^{2q}}   \| [\Delta_q,v\cdot \nabla]\Sigma(t) \|_{L^{p}}   .$$ 
This implies that 
$$  \| \Sigma_q(t) \|_{L^p}   \lesssim  e^{-c t2^{2q}} \|\Sigma_q(0)\|_{L^p} + \int_0^t e^{-c (t-\tau)2^{2q}} \|[\Delta_q,v\cdot \nabla]\Sigma(\tau)\|_{L^{p}} d\tau  .$$
Taking the $L^1_t-$norm to obtain
$$  \| \Sigma_q \|_{L^\infty_tL^p} +c 2^{2q}\| \Sigma_q\|_{L^{1}_{t} L^p}  \lesssim   \| \Sigma_q(0)\|_{L^p}   +\|[\Delta_q,v\cdot \nabla]\Sigma \|_{L^{1}_{t} L^{p}}  .$$ 
Multiplying both sides of above inequality by $2^{-2q}$, leading to
\begin{equation}\label{Eq:4-12}
 \| \Sigma_q\|_{L^{1}_{t} L^p}  \lesssim   2^{-2q} \| \Sigma_q(0)\|_{L^p}   +2^{-2q}\|[\Delta_q,v\cdot \nabla]\Sigma \|_{L^{1}_{t}L^{p}}.
 \end{equation}  
For $p=2$, \eqref{Eq:4-12} takes the form
\begin{equation*}
 \| \Sigma_q\|_{L^{1}_{t} L^2}  \lesssim   2^{-2q} \| \Sigma_q(0)\|_{L^2}+2^{-2q}\|[\Delta_q,v\cdot \nabla]\Sigma \|_{L^{1}_{t}L^{2}}.
 \end{equation*}  
So, by definition of $\mathcal{B}^{\frac32}_{2,1}$ and Proposition \ref{prop:2.1} in appendix, it happens 
\begin{eqnarray*}
\| \Sigma\|_{L^1_t\mathcal{B}^{\frac{3}{2}}_{2,1}} &\le&   \|\Delta_{-1} \Sigma \|_{L^{1}_{t}L^2} + \sum_{q \geq 0} 2^{q \frac{3}{2}} \| \Sigma_q\|_{L^{1}_{t}L^2} \\ &\lesssim& \| \Sigma \|_{L^{1}_{t}L^2} +   \| \Sigma^{0}\|_{L^2} \sum_{q \geq 0}   2^{-q \frac{1}{2}} +\sum_{q \geq 0}   2^{-q \frac{1}{2}}  \|[\Delta_q,v\cdot \nabla]\Sigma \|_{L^{1}_{t} L^{2}}  \\ &\lesssim& \| \Sigma \|_{L^{1}_{t}L^2} +   \| \Sigma^{0}\|_{L^2}    + \int_0^t \| \Omega(\tau)\|_{L^{3,1}}  \big(\| B_\theta(\tau) \|_{L^6} +\| \Sigma(\tau)\|_{ L^2}\big)d\tau .
\end{eqnarray*}
Furthermore, from {\bf{(ii)}-}Proposition \ref{prop:4.1}, we get 
\begin{eqnarray}\label{Eq:4.13}
\| \Sigma\|_{L^{1}_{t}\mathcal{B}^{\frac{3}{2}}_{2,1}} &\le& \| \Sigma^0\|_{L^2}(1+t) + \int_0^t \| \Omega(\tau)\|_{L^{3,1}} \Big(\| B(\tau) \|_{L^6} +\| \Sigma^0\|_{L^2}  \Big)d\tau .
\end{eqnarray}
Inserting the above estimate in \eqref{Eq:4.10}, we thus have 
\begin{eqnarray*}
\|\Omega(t)\|_{L^{3,1}} &\le&   C_0(1+t) +C_0 \int_0^t \| \Omega(\tau)\|_{L^{3,1}}  \big( \| B(\tau) \|_{L^6} +1\big)d\tau .
\end{eqnarray*}
Gronwall's inequality combined with Proposition \ref{prop:4.1} ensures that
\begin{equation}\label{Eq:4.14}
\|\Omega(t)\|_{L^{3,1}} \le \Phi_1(t).
\end{equation}
Since $\| \frac{\omega_\theta}{r}\vec e_\theta\|_{L^{3,1}}=\|\frac{\omega}{r}\|_{L^{3,1}},$ we infer that 
\begin{equation}\label{Eq:4.8}
\Big\|\frac{\omega(t)}{r}\Big\|_{L^{3,1}} \le \Phi_1(t).
\end{equation}
Substituting \eqref{Eq:4.14} in \eqref{Eq:4.13}, it follows that 
\begin{equation}\label{Eq:4.9}
\| \Sigma\|_{L^1_t\mathcal{B}^{\frac{3}{2}}_{2,1}} \le \Phi_1(t).
\end{equation}
On the other hand, due to T. Shirota and T. Yanagisawa \mbox{\cite{Sh-Ya}}, we have 
\begin{equation}
\Big|\frac{v^r}{r}\Big|\lesssim \frac{1}{{\vert . \vert}^{2}}\star \Big\vert\frac{\omega_{\theta}}{r}\Big\vert.
\end{equation} 
As $\frac{1}{{\vert . \vert}^{2}}\in L^{\frac{3}{2}, \infty},$ then from the convolution laws $L^{p, q}\star L^{p', q'}\rightarrow L^{\infty}$, we have
\begin{equation}\label{Eq:4-10}
\Big\Vert\frac{v_{r}}{r}\Big\Vert_{L^\infty_t  L^{\infty}}\lesssim \Vert \Omega(t)\Vert_{L^{3, 1}}\le \Phi_1(t) .
\end{equation}
We collect \eqref{Eq:4.8}, \eqref{Eq:4.9} and \eqref{Eq:4-10} we find {\bf(i)}.\\
{\bf (ii)} Since $\omega= \omega_{\theta}\vec e_{\theta}$, then from the first equation of \eqref{VE(mu-1)}, the vorticity  $\omega$ satisfies 
\begin{equation}\label{Eq:Om}
\left\{ \begin{array}{ll}
\partial_{t}\omega+v\cdot\nabla\omega-\mu\Delta\omega=\dfrac{v_{r}}{r}\omega -\partial_z \big( \Sigma B\big) ,\\
{\omega}_{\vert t=0}=\omega^{0}.
\end{array} \right.
\end{equation}
From maximum's principle, {\bf(iii)-}Proposition \ref{prop:4.1} and the embedding $ \mathcal{B}^{\frac{3}{\sigma}+1}_{\sigma,1} \hookrightarrow \Lip$ yield  
\begin{eqnarray}\label{Es :Om-L3.1}
\Vert\omega(t)\Vert_{L^{\infty}}&\le&\Vert\omega^{0}\Vert_{L^{\infty}}+\int_0^t\| \partial_z \big( \Sigma(\tau)B(\tau)\big)\Vert_{L^{\infty}}d\tau +\int_{0}^{t}\Big\Vert\frac{v_{r}}{r}(\tau)\omega(\tau)\Big\Vert_{L^{\infty}}d\tau \nonumber  \\
&\lesssim&\Vert\omega^{0}\Vert_{L^{\infty}}+\int_0^t \|\partial_z  B(\tau)\|_{L^\infty} \| \Sigma (\tau)\|_{L^\infty} d\tau +\int_{0}^{t} \Big\Vert\frac{v}{r} (\tau)\Big\Vert_{L^{\infty}} \Vert\omega(\tau)\Vert_{L^{\infty}}d\tau \nonumber\\
&\lesssim& \Vert\omega^{0}\Vert_{L^{\infty}}+\|\Sigma^0\|_{L^\infty}  \| B\|_{L^1_t \mathcal{B}^{\frac{3}{\sigma}+1}_{\sigma,1} }  +\int_{0}^{t} \Big\Vert\frac{v}{r} (\tau)\Big\Vert_{L^{\infty}} \Vert\omega(\tau)\Vert_{L^{\infty}}d\tau  .
\end{eqnarray}
To bound the term $\| B\|_{L^1_t \mathcal{B}^{\frac{3}{\sigma}+1}_{\sigma,1}}$, first, we combine $B=B_{\theta} \vec{e}_{\theta}$ with the second equation of \eqref{VE(mu-1)}, we find that $B$ satisfies 
\begin{equation*}
\partial_{t}B+v\cdot\nabla B-\Delta B=\dfrac{B}{r}v_{r}.
\end{equation*}
Second, we localize in frequency the above equation by taking  $\Delta_q  B \triangleq B_q$ and  $G_q \triangleq \Delta_q ( B\cdot \nabla v) =\Delta_q ( \dfrac{B}{r}v_{r})$, for $q \in \mathbb{N}\cup \{-1\}$. Thus, we have
$$ (\partial_t +v \cdot \nabla -\Delta) B_q =G_q -[\Delta_q,v \cdot \nabla ]B_q. $$ 
Multiplying by $|B_q|^{p-2}B_q$. So, H\"older's inequality tells   
\begin{equation*}
\frac{1}{p}\frac{d}{dt}\|B_q(t) \|^p_{L^p} - \int_{\RR^3} (\Delta  B_q ) |B_q|^{p-2}B_q  dx \le\|B_q(t) \|^{p-1}_{L^p} \big( \|G_q(t)\|_{L^{p}} + \| [\Delta_q,v\cdot \nabla]B(t) \|_{L^{p}} \big).
\end{equation*}
Then from \eqref{Bs-G}, it follows that
$$
\frac{d}{dt} \| B_q(t) \|_{L^p} +c2^{2q}\| B_q(t) \|_{L^p}  \lesssim  \|G_q(t)\|_{L^{p}} + \| [\Delta_q,v\cdot \nabla]B(t) \|_{L^{p}}. $$
This gives 
$$
\frac{d}{dt} \Big( e^{ct2^{2q}} \| B_q(t) \|_{L^p}\Big) \lesssim e^{ct2^{2q}} \Big( \|G_q(t)\|_{L^{p}} + \| [\Delta_q,v\cdot \nabla]B(t) \|_{L^{p}} \Big).
$$ 
Consequently,
$$\| B_q(t) \|_{L^p}   \lesssim  e^{-c t2^{2q}} \| B_q(0)\|_{L^p} + \int_0^t e^{-c(t-\tau)2^{2q}} \Big( \|G_q(\tau)\|_{L^{p}}  +\|[\Delta_q,v\cdot \nabla]B(\tau)\|_{L^{p}}\Big) d\tau.
$$
By the classical Young's convolution inequality in time, it holds
\begin{equation}\label{Eq:4.21}
\| B_q \|_{L^\infty_tL^p} +2^{\frac{2q}{\eta}}\| B_q\|_{L^\eta_t L^p}  \lesssim   \| B_q(0)\|_{L^p} + \|G_q\|_{L^1_t L^{p}}  +\|[\Delta_q,v\cdot \nabla]B\|_{L^1_t L^{p}}  .
\end{equation}
For the term $\|G_q\|_{L^{1}_{t}L^{p}}$, we explore the continuity  of $\Delta_q$ on $L^p$ into itself and  H\"older's inequality in time to deduce that 
$$  
\| B_q \|_{L^\infty_tL^p} + 2^{\frac{2q}{\eta}} \| B_q\|_{L^{\eta}_t L^p}  \lesssim   \| B_q(0)\|_{L^p} +\Big{\|}\frac{v^r}{r}\Big{\|}_{L^\infty_{t} L^{\infty}} \|B\|_{{L^1_t } L^{p}}  +\|[\Delta_q,v\cdot \nabla]B\|_{L^1_t L^{p}}.
$$ 
For the commutator term in the r.h.s. we make use Proposition's \ref{prop:7.1} in appendix to get
\begin{align}\label{4.22}
\| B_q \|_{L^\infty_tL^p} + 2^{\frac{2q}{\eta}} \| B_q\|_{L^\eta _t L^p}  \lesssim &   \| B_q(0)\|_{L^p} + \| B\|_{L^{1}_t L^p} \Big( \Big{\|}\frac{v^r}{r}\Big{\|}_{L^{\infty}_t L^{\infty}} +\|v\|_{L^{\infty} _t L^{2}}\Big) \nonumber \\+&  (q+2)  \int_0^t \| B(\tau)\|_{L^p} \|\omega(\tau) \|_{ L^{\infty}}d \tau .
\end{align}  
In particular, for $ \eta =1$  and $p=\sigma>3$, we obtain 
\begin{align*}
\| B\|_{L^\infty_t \mathcal{B}^{\frac{3}{\sigma}-1}_{\sigma,1}}+  \| B\|_{L^1_t\mathcal{B}^{ \frac{3}{\sigma}+1}_{\sigma,1}} \lesssim&  \|\Delta_{-1} B\|_ {L^1_tL^\sigma} +   \| B^0\|_{ \mathcal{B}^{\frac{3}{\sigma}-1}_{\sigma,1}}\\& +\| B\|_{L^{1}_t L^\sigma} \Big( \|v\|_{L^{\infty}_t L^{2}} +\Big{\|}\frac{v^r}{r}\Big{\|}_{L^{\infty}_t L^{\infty}} \Big)\Big(\sum_{q\geq 0} 2^{q(\frac{3}{\sigma}-1)} (q+2) \Big) \\&+   \int_0^t \| B(\tau)\|_{L^\sigma} \|\omega(\tau) \|_{ L^{\infty}}d \tau  \Big(\sum_{q\geq 0} 2^{q(\frac{3}{\sigma}-1)} (q+2) \Big).   
\end{align*}
The fact that $\sigma>3$, the serie $\sum_{q\geq 0} 2^{q(\frac{3}{\sigma}-1)} (q+2)$ converges, we find that
\begin{eqnarray}\label{Es:B-B}
\| B\|_{L^\infty_t \mathcal{B}^{\frac{3}{\sigma}-1}_{\sigma,1}}+  \| B\|_{L^1_t\mathcal{B}^{ \frac{3}{\sigma}+1}_{\sigma,1}} &\lesssim&   \| B^0\|_{ \mathcal{B}^{\frac{3}{\sigma}-1}_{\sigma,1}} +    \| B\|_{L^{1}_t L^\sigma} \Big(1+\|v\|_{L^{\infty}_t L^{2}} +\Big{\|}\frac{v^r}{r}\Big{\|}_{L^{\infty}_t L^{\infty}} \Big)\\
&&+\nonumber\int_0^t \| B(\tau)\|_{L^\sigma} \|\omega(\tau) \|_{ L^{\infty}}d \tau,
\end{eqnarray}
comined with \eqref{Es :Om-L3.1}, we get 
\begin{align*}
\Vert\omega(t)\Vert_{L^{\infty}}&\le   C_{0}+ C_{0} \bigg(1+\| B\|_{L^{1}_t L^\sigma} \Big(1+\|v\|_{L^{\infty}_t L^{2}} +\Big{\|}\frac{v^r}{r}\Big{\|}_{L^{\infty}_t L^{\infty}}   \Big)\bigg)\\& +\int_{0}^{t}\Big(  \Big\Vert\frac{v^r}{r} (\tau)\Big\Vert_{L^{\infty}}+ \| B(\tau)\|_{L^\sigma} \Big) \Vert\omega(\tau)\Vert_{L^{\infty}}d\tau .
\end{align*}
Gronwall's inequality leads to
\begin{align*}
\Vert\omega(t)\Vert_{L^{\infty}} \le  C_{0} \bigg(1+\| B\|_{L^{1}_t L^\sigma} \Big(1+\|v\|_{L^{\infty}_t L^{2}} +\Big{\|}\frac{v^r}{r}\Big{\|}_{L^{\infty}_t L^{\infty}}   \Big)\bigg) e^{C_0\big(  \Vert\frac{v}{r}\Vert_{L^1_tL^{\infty}}+ \| B\|_{L^1_tL^\sigma} \big)} .
\end{align*}
Thanks to {\bf (i)}, {\bf (iv.b)}-Proposition \ref{prop:4.1} and \eqref{Eq:4-10}, we deduce that
\begin{eqnarray}\label{Eq:4.12}
\Vert \omega(t)\Vert_{L^{\infty}} \le \Phi_2(t).
\end{eqnarray}
Inserting the last estimate in \eqref{Es:B-B}, so by employing {\bf (i)}, {\bf (iv.b)}-Proposition \ref{prop:4.1} and  \eqref{Eq:4-10},\eqref{Eq:4.12}, we end up with 
\begin{eqnarray}\label{Es:B,b}
\| B\|_{L^\infty_t \mathcal{B}^{\frac{3}{\sigma}-1}_{\sigma,1}}+  \| B\|_{L^1_t\mathcal{B}^{ \frac{3}{\sigma}+1}_{\sigma,1}}  \le \Phi_2(t).
\end{eqnarray}
To close our claim, we must estimate $\| B\|_{L^2_t \mathcal{B}^{0}_{\infty,1}}$. For this purpose, we combine Bernstein's inequality for $p=\sigma >3 ,$ with \eqref{4.22} for $\eta=2$ to obtain 
\begin{eqnarray*}
\| B\|_{L^2_t \mathcal{B}^{0}_{\infty,1}}  &\le &   \|\Delta_{-1} B \|_ {L^2_tL^\infty} +\sum_{q\geq 0} 2^{q} \|\Delta_{q} B\|_ {L^2_tL^\infty} \\ 
 &\lesssim&  \| B\|_ {L^2_tL^2}  +\sum_{q\geq 0} 2^{q\frac{3}{\sigma}} \|\Delta_{q} B\|_ {L^2_tL^\sigma}   \\ &\lesssim&  \| B\|_ {L^2_tL^2}  +  \| B^0\|_{ \mathcal{B}^{\frac{3}{\sigma}-1}_{\sigma,1}} +    \| B\|_{L^{1}_t L^\sigma} \Big(\|v\|_{L^{\infty}_t L^{2}} +\Big{\|}\frac{v^r}{r}\Big{\|}_{L^{\infty}_t L^{\infty}} + \Vert \omega\Vert_{L^\infty_t L^{\infty}} \Big).
\end{eqnarray*} 
Finally, in view of {\bf (i)},{\bf (iv.b)}-Proposition \ref{prop:4.1} and \eqref{Eq:4-10}, \eqref{Eq:4.12}, we conclude that 
\begin{eqnarray}\label{Est:B-0}
\| B\|_{L^2_t \mathcal{B}^{0}_{\infty,1}}  \le \Phi_2(t).
\end{eqnarray}
The desired estimate is then proved.\\
{\bf (iv)} This item will be done by using an argument of \mbox{P. Serfati \cite{Serfati}}. From homogeneous  Littlewood-Paley  decomposition, we write
\begin{equation*}
\Vert v(t)\Vert_{L^{\infty}}\le \Vert \dot{S}_{-N}v\Vert_{L^{\infty}}+\sum_{q\ge -N}\Vert \dot{\Delta}_qv\Vert_{L^{\infty}},
\end{equation*}
where $N$ is a parameter that  will be judiciously  chosen later. Using the fact $2^{q} \Vert \dot{\Delta}_q \omega \Vert_{L^{\infty}} \approx \Vert \dot{\Delta}_q v\Vert_{L^{\infty}} $ for $q\in \mathbb{N}$  we get,
\begin{equation}\label{m10}
\sum_{q\ge -N}\Vert \dot{\Delta}_q v(t)\Vert_{L^{\infty}}\lesssim \sum_{q\ge -N} 2^{-q}\Vert \dot{\Delta}_q \omega(t) \Vert_{L^{\infty}} \lesssim 2^{N}\Vert\omega(t)\Vert_{L^{\infty}}.
\end{equation}
Since $ \dot{S}_{-N}v$ satisfies the equation,
\begin{equation*}
(\partial_{t}-\mu\Delta)\dot{S}_{-N}v=\dot{S}_{-N}\mathbb{P}\big( B\cdot\nabla B\big)- \dot{S}_{-N}\mathbb{P}\big(v\cdot\nabla v\big),
\end{equation*}
where $\mathbb{P}$ refers to Leray's operator. Then we get 
\begin{eqnarray*}
\Vert \dot{S}_{-N}v(t)\Vert_{L^{\infty}}&\le& \Vert \dot{S}_{-N}v^{0}\Vert_{L^{\infty}}+\int_{0}^{t} \Big(\Vert\dot{S}_{-N}\mathbb{P}(v\cdot\nabla v)(\tau) \Vert_{L^{\infty}}+\Vert\dot{S}_{-N}\mathbb{P}(B\cdot\nabla B)(\tau)\Vert_{L^{\infty}}\Big)d\tau
\\&\le& \Vert \dot{S}_{-N}v^{0}\Vert_{L^{\infty}} +\sum_{q\le -N}\int_{0}^{t}\Big( \Vert\dot{\Delta}_q \mathbb{P}(v\cdot\nabla v)(\tau) \Vert_{L^{\infty}} +\Vert \dot{\Delta}_q  \mathbb{P}(B\cdot\nabla B)(\tau)\Vert_{L^{\infty}} \Big) d\tau,
\end{eqnarray*}
Using the fact $v\cdot\nabla v= \Div( v\otimes v), B\cdot\nabla B= \Div( B\otimes B) $ and Bernstein's inequality yield
\begin{eqnarray*}
\Vert \dot{S}_{-N}v(t)\Vert_{L^{\infty}}&\lesssim&  \Vert \dot{S}_{-N}v^{0}\Vert_{L^{\infty}} +\sum_{q\le -N} 2^{q} \int_{0}^{t}\Big( \Vert \dot{\Delta}_q \mathbb{P}( v\otimes v)(\tau)\Vert_{L^{\infty}}d\tau+\int_{0}^{t}\Vert \dot{\Delta}_q  \mathbb{P}( B\otimes B)(\tau)\Vert_{L^{\infty}} \Big) d\tau.
\end{eqnarray*} 
The fact that $\dot\Delta_{q}\mathbb{P}$ maps continuously $L^\infty$ into itself. Thus we deduce,
\begin{equation*}\label{m11}
\Vert \dot{S}_{-N}v\Vert_{L^{\infty}}\lesssim \Vert v^{0}\Vert_{L^{\infty}}+2^{-N}\int_{0}^{t}\Vert v(\tau)\Vert_{L^{\infty}}^2d\tau +\int_{0}^{t}\Vert B(\tau)\Vert_{L^{\infty}}^2d\tau,
\end{equation*}
combined with (\ref{m10}), we get
\begin{equation}\label{m:12}
\Vert v(t)\Vert_{L^{\infty}}\lesssim \Vert v^{0}\Vert_{L^{\infty}}+2^{N}\Vert\omega(t) \Vert_{L^{\infty}}+2^{-N}\int_{0}^{t}\Vert v(\tau)\Vert_{L^{\infty}}^2d\tau+\int_{0}^{t}\Vert B(\tau)\Vert_{L^{\infty}}^2d\tau. 
\end{equation}
We choose $N$ such that
\begin{equation*}
2^{2N}\approx 1+\Vert\omega(t)\Vert_{L^{\infty}}^{-1}\int_{0}^{t}\Vert v(\tau)\Vert_{L^{\infty}}^2d\tau,
\end{equation*}
Then the estimate (\ref{m:12}) becomes
\begin{equation*}
\Vert v(t)\Vert_{L^{\infty}}^{2}\lesssim\Vert v^{0}\Vert_{L^{\infty}}^{2} +\bigg(\int_{0}^{t}\Vert B(\tau)\Vert_{L^{\infty}}^2d\tau \bigg)^2+\Vert\omega(t){\Vert}^2_{L^{\infty}}+\Vert\omega(t)\Vert_{L^{\infty}}\bigg(\int_{0}^{t}\Vert v(\tau)\Vert_{L^{\infty}}^{2}d\tau \bigg).
\end{equation*}
Granwall's inequality enubles us to write
\begin{equation*}\label{m13}
\Vert v(t)\Vert_{L^{\infty}}^{2} \lesssim e^{Ct\Vert\omega\Vert_{L^{\infty}_tL^\infty}} \Big(\Vert v^{0}\Vert_{L^{\infty}}^{2}+\| B\|^4_{L^2_t \mathcal{B}^{0}_{\infty,1}}+\Vert\omega(t){\Vert}^2_{L^{\infty}} \Big).
\end{equation*}
Finally, \eqref{Eq:4.12} and \eqref{Est:B-0} yield 
\begin{equation*}
\Vert v(t)\Vert_{L^{\infty}}\le \Phi_4(t).
\end{equation*}
The proof is now completed.
\end{proof}

\subsection{Vorticity decomposition and Lipschitz bound}\label{Lipschitz-V}
The following result is the principal step to bounding the Lipschitz norm of the velocity.  We will establish a new decomposition of the vorticity based on the special structure of axisymmetric flows.  We notice that this result was first proved for the Euler case in \cite{AKH} and generalized later in \cite{hz0} for the viscous case uniformly to the viscosity. Roughly, we will prove the following result.
\begin{prop}\label{Prop:4.3} Let $\omega$ be the vorticity of the viscous axisymmetric solution. Then there exists a decomposition $\{\widetilde{\omega}_{q}\}_{q\ge -1}$ of the vorticity $\omega$ such that for every $t\in\RR_{+},$
\begin{itemize}
\item[{\bf(i)}]$\omega(t, x)=\sum_{q\ge-1}\widetilde{\omega}_{q}(t, x)$.
\item[{\bf(ii)}]$\Div\widetilde\omega_{q}(t, x)=0$.
\item[{\bf(iii)}]$\forall q\ge-1,\; \Vert\widetilde\omega_{q}(t)\Vert_{L^{\infty}}\le\Big(\Vert\Delta_{q}\omega^{0}\Vert_{L^{\infty}}+2^{q}\Big\|\Delta_{q}\Big(\frac{B_{\theta}}{r}B\Big)\Big\|_{L^1_t L^\infty}\Big)\Phi_2(t) $.
\item[{\bf(iv)}] There exists a constant $C>0$ independent on the viscosity such that for every $k, q\ge-1$
\end{itemize}
\begin{equation*}
\Vert\Delta_{k}\widetilde\omega_{q}(t)\Vert_{L^{\infty}}\le C2^{|q-k|}e^{CZ(t)}\Big(\Vert\Delta_{q}\omega^{0}\Vert_{L^{\infty}}+2^{q}\Big\|\Delta_{q}\Big(\frac{B_{\theta}}{r}B^{1}\Big)\Big\|_{L^{1}_{t}  L^\infty}\Big).
\end{equation*}
\end{prop}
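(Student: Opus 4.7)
The plan is to build the family $\{\widetilde{\omega}_q\}_{q\ge-1}$ by localizing only the \emph{magnetic} source term in the vorticity equation while preserving both the stretching mechanism and the full transport--diffusion operator. Concretely, define $\widetilde{\omega}_q$ as the unique solution of
\begin{equation*}
\partial_t \widetilde{\omega}_q + v\cdot\nabla\widetilde{\omega}_q - \mu\Delta\widetilde{\omega}_q = \frac{v^r}{r}\widetilde{\omega}_q - \partial_z\Delta_q\Big(\frac{B_\theta}{r}B\Big), \qquad \widetilde{\omega}_q(0)=\Delta_q\omega^0.
\end{equation*}
Summing over $q$, the difference $\omega-\sum_q\widetilde{\omega}_q$ satisfies the homogeneous transport--diffusion equation with the stretching coefficient $v^r/r$ and zero initial data; uniqueness yields (i).

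For (ii), note that $\Delta_q\omega^0$ inherits the azimuthal structure $(\Delta_q\omega^0)_\theta\vec{e}_\theta$ by Proposition \ref{prop3}(ii), and the source $-\partial_z\Delta_q(B_\theta^2/r)\vec{e}_\theta$ has the same form since $\vec{e}_\theta$ is $z$-independent. Repeating verbatim the rotation-invariance and component-by-component argument from Proposition \ref{Prop-1}(ii)--(iii) shows that $\widetilde{\omega}_q=\widetilde{\omega}_{q,\theta}(t,r,z)\vec{e}_\theta$ persists, from which $\Div\widetilde{\omega}_q=r^{-1}\partial_\theta\widetilde{\omega}_{q,\theta}=0$ is immediate.

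Estimate (iii) is a standard $L^p$ estimate on the above equation: multiply by $|\widetilde{\omega}_q|^{p-2}\widetilde{\omega}_q$, use $\Div v=0$, drop the non-negative dissipation via the generalized Bernstein inequality \eqref{Bs-G}, let $p\uparrow\infty$, and apply Gronwall. The resulting exponential factor $\exp(C\|v^r/r\|_{L^1_tL^\infty})$ is absorbed into $\Phi_2(t)$ using the control of $\|v^r/r\|_{L^\infty_tL^\infty}$ from Proposition \ref{prop:4.2}(i), and Bernstein converts $\|\partial_z\Delta_q(\cdot)\|_{L^\infty}$ into the announced $2^q\|\Delta_q(\cdot)\|_{L^1_tL^\infty}$ factor.

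The delicate part is (iv), and I expect it to be the main obstacle. The strategy is to apply $\Delta_k$ to the equation for $\widetilde{\omega}_q$, commute it through the transport term $v\cdot\nabla$ and the stretching term $\frac{v^r}{r}\widetilde{\omega}_q$, and then balance the loss against the heat smoothing. I would split into two regimes: for $k\le q$, crude Bernstein bounds combined with (iii) suffice; for $k>q$ one appeals to Vishik's logarithmic flow estimate \eqref{V-log} applied to $\widetilde{\omega}_q\circ\psi_v$, where $\psi_v$ is the Lagrangian flow of $v$, exploiting the two-sided Lipschitz bound $\|\psi_v\|_{\Lip}\|\psi_v^{-1}\|_{\Lip}\le e^{CZ(t)}$ dictated by $Z(t)=\|v\|_{L^1_tB^1_{\infty,1}}$. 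Commutator estimates from Proposition \ref{prop:7.1} control $[\Delta_k,v\cdot\nabla]$ and a version for the stretching multiplier $v^r/r$ controls the remaining coupling. The hard point is to keep the loss factors sharp enough to produce a single exponential $e^{CZ(t)}$ in front and a clean power of $2$ in $|q-k|$, without picking up extra logarithmic losses from the simultaneous presence of the stretching term and the magnetic source---this is precisely where the persistence of the axisymmetric no-swirl structure for both $\omega$ and $B$ (established in (ii)) is crucial, since it allows one to handle $\frac{v^r}{r}\widetilde{\omega}_q$ as a bounded multiplier rather than a genuine derivative loss.
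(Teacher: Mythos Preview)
Parts (i)--(iii) of your sketch are essentially correct and match the paper. The genuine gap is in (iv). First, the bound in the statement should read $2^{-|q-k|}$ (decay), not $2^{|q-k|}$; this is a typo, as is clear from the application in Proposition~\ref{Prop:4.4}, where the sum over $|k-q|\ge N$ must produce a factor $2^{-N}$. Read correctly, your plan fails on both sides. For $k\le q$, ``crude Bernstein plus (iii)'' yields only $\|\Delta_k\widetilde{\omega}_q\|_{L^\infty}\lesssim\|\widetilde{\omega}_q\|_{L^\infty}$ with prefactor $\Phi_2(t)$ --- no decay in $q-k$, and the wrong constant (you need $e^{CZ(t)}$, not a function of $t$ alone). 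For $k>q$, the Lagrangian/Vishik route is blocked by the dissipation: composition with the flow $\psi_v$ does not commute with $\mu\Delta$, so Vishik's estimate \eqref{V-log} cannot be applied to the evolved quantity, and any attempt to carry the Laplacian through destroys the uniformity in $\mu$ that the proposition requires.

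The paper avoids Lagrangian coordinates entirely. It applies the transport--diffusion persistence estimate of Proposition~\ref{Prop:1.1} --- which is already uniform in $\mu$ --- to the equation for $\widetilde{\omega}_q$ in two different Besov scales. Working in $\mathscr{B}^{-1}_{\infty,\infty}$ and bounding the stretching term $\widetilde{\omega}_q\cdot\nabla v$ there via Bony's decomposition yields the $2^{k-q}$ bound. For the $2^{q-k}$ bound one passes to the first Cartesian component $\widetilde{\omega}_q^1$, whose scalar equation has right-hand side $\frac{v^2}{x_2}\widetilde{\omega}_q^1$, and runs Proposition~\ref{Prop:1.1} in $\mathscr{B}^1_{\infty,1}$. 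The key technical step absent from your sketch is the control of the product $v^2\frac{\widetilde{\omega}_q^1}{x_2}$ in $\mathscr{B}^1_{\infty,1}$ and of $\frac{\widetilde{\omega}_q^1}{x_2}$ in $\mathscr{B}^0_{\infty,1}$: one uses the vanishing $\widetilde{\omega}_q^1(x_1,0,z)=0$ (inherited from the azimuthal structure) to write $\frac{\widetilde{\omega}_q^1}{x_2}=\int_0^1(\partial_2\widetilde{\omega}_q^1)(x_1,\tau x_2,x_3)\,d\tau$, and then the anisotropic-scaling Lemma~\ref{Lem:6.1} to bound the Besov norm of each dilate by $(1-\log\tau)\|\widetilde{\omega}_q^1\|_{\mathscr{B}^1_{\infty,1}}$, which is integrable in $\tau$. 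Bony's decomposition and a commutator with $\frac{1}{x_2}$ handle the remaining pieces; no flow-composition estimate is invoked anywhere.
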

\begin{proof}
{\bf(i)} The main idea is to linearize the vortitity's equation. To do so, let $q\ge -1$ and define $\widetilde\omega_{q}$ as the solution of the following linear Cauchy problem
\begin{equation}\label{m14}
\left\{ \begin{array}{ll}
\partial_{t}\widetilde\omega_{q}-\mu\Delta\widetilde\omega_{q}+(v\cdot\nabla)\widetilde\omega_{q}=\widetilde\omega_{q}\cdot\nabla v-\partial_{z}\Big(\Delta_{q}\Big(\frac{B_{\theta}}{r}B\Big)\Big). \\
\widetilde\omega_{q}\vert_{t=0}=\Delta_{q}\omega^{0}.
\end{array} \right.
\end{equation}

The fact that the cut-off operators commutes with $\Div$ operator, that is $\Div(\Delta_q \omega^0)=0$, one deduce in view of {\bf(ii)}-Proposition \ref{Prop-1} for $t\in\RR_{+}$ that $\Div(\widetilde{\omega}_q)=0$. Hence the linearity and uniqueness enable us to write $\omega(t, x)=\sum_{q\ge-1}\widetilde\omega_{q}(t, x)$. Taking advantage to {\bf(ii)}-Proposition \ref{prop3}, we have $\Delta_{q}\omega^0\wedge\vec{e}_{\theta}=\vec{0}$. Then {\bf(ii)}-Proposition \ref{Prop-1} implies that this  property is preserved through the time and
\begin{equation}\label{m15}
\left\{ \begin{array}{ll}
\partial_{t}\tilde\omega_{q}-\mu\Delta\widetilde\omega_{q}+(v\cdot\nabla)\widetilde\omega_{q}=\frac{v^r}{r}\widetilde\omega_{q}-\partial_{z}\Big(\Delta_{q}\Big(\frac{B_{\theta}}{r}B\Big)\Big), \\
\widetilde\omega_{q}\vert_{t=0}=\Delta_{q}\omega_{0}.
\end{array} \right.
\end{equation}

Maximum's principle leads to
\begin{eqnarray*}
\Vert\widetilde\omega_{q}(t)\Vert_{L^{\infty}}&\le&\Vert\Delta_{q}\omega^{0}\Vert_{L^{\infty}}+\int_{0}^{t}\Big\Vert \frac{v^{r}}{r}{(\tau)}\Big\Vert_{L^{\infty}}\Vert \widetilde{\omega}_q{(\tau)}\Vert_{L^{\infty}}d\tau+\int_{0}^{t}\Big\|\partial_{z}\Big(\Delta_{q}\Big(\frac{B_{\theta}}{r}B\Big)\Big)(\tau)\Big\|_{L^\infty}d\tau  .
\end{eqnarray*}
Account, {\bf(i)}-Proposition \ref{prop:4.2} and Bernstein lemma, one has 
\begin{eqnarray*}
\Vert\widetilde\omega_{q}(t)\Vert_{L^{\infty}}&\le&\Vert\Delta_{q}\omega^{0}\Vert_{L^{\infty}} +\Phi_{1}(t) \int_{0}^{t}\Vert\widetilde\omega_{q}(\tau)\Vert_{L^{\infty}}d\tau+2^{q}\int_{0}^{t}\Big\|\Delta_{q}\Big(\frac{B_{\theta}}{r}B\Big)(\tau)\Big\|_{L^\infty}d\tau   .
\end{eqnarray*}
Gronwall's inequality leads 
\begin{equation*}
\Vert\widetilde\omega_{q}(t)\Vert_{L^{\infty}}\le \Phi_2(t) \bigg(\Vert\Delta_{q}\omega^{0}\Vert_{L^{\infty}}+2^{q}\Big\|\Delta_{q}\Big(\frac{B_{\theta}}{r}B\Big)\Big\|_{L^1_t L^\infty}\bigg).
\end{equation*}
The item {\bf(iv)} will be done in two-steps as below. 
\begin{equation}\label{m16}
\Vert\Delta_{k}\widetilde\omega_{q}(t)\Vert_{L^{\infty}}\le C2^{ k-q}e^{CZ(t)}\bigg(\Vert\Delta_{q}\omega_{0}\Vert_{L^{\infty}}+2^{q}\Big\|\Delta_{q}\Big(\frac{B_{\theta}}{r}B\Big)\Big\|_{L^1_t L^\infty}\bigg).
\end{equation}
and
\begin{equation}\label{m17}
\Vert\Delta_{k}\widetilde\omega_{q}(t)\Vert_{L^{\infty}}\le C2^{q-k}\bigg(\Vert\Delta_{q}\omega_{0}\Vert_{L^{\infty}}+2^{q}\Big\|\Delta_{q}\Big(\frac{B_{\theta}}{r}B\Big)\Big\|_{L^1_t L^\infty}\bigg).
\end{equation}
First, we handle with \eqref{m16} by employing Proposition \ref{Prop:1.1} in limit case $s=-1$ to \eqref{m14}, one obtains
\begin{eqnarray}\label{Eq:4,21}
e^{-CZ(t)}\Vert\widetilde\omega_{q}\Vert_{\mathscr{B}_{\infty, \infty}^{-1}}&\lesssim& \Vert\Delta_{q}\omega^{0}\Vert_{\mathscr{B}_{\infty, \infty}^{-1}}+\int_{0}^{t}e^{-CZ(\tau)}\Vert \widetilde\omega_{q}\cdot\nabla v(\tau)\Vert_{\mathscr{B}_{\infty, \infty}^{-1}}d\tau\\
&&+\int_{0}^{t}e^{-CZ(\tau)}\Big\|\partial_z\Big(\Delta_{q}\Big(\frac{B_{\theta}}{r}B\Big)\Big)(\tau)\Big\|_{\mathscr{B}_{\infty, \infty}^{-1}}d\tau.\nonumber
\end{eqnarray}
For the term $\|\widetilde\omega_{q}\cdot\nabla v\Vert_{\mathscr{B}_{\infty, \infty}^{-1}}$ Bony's decomposition allows us to write 
\begin{equation*}
\widetilde\omega_{q}\cdot\nabla v=T_{\widetilde\omega_{q}}\cdot\nabla v+T_{\nabla v}\cdot\widetilde\omega_{q}+{R}(\widetilde\omega_{q}^i,\partial_i v).
\end{equation*}
Consequently,
\begin{eqnarray*}
\Vert\widetilde\omega_{q}\cdot\nabla v\Vert_{\mathscr{B}_{\infty, \infty}^{-1}}&\le& \Vert T_{\widetilde\omega_{q}}\cdot\nabla v\Vert_{\mathscr{B}_{\infty, \infty}^{-1}}+\Vert T_{\nabla v}\cdot\widetilde\omega_{q}\Vert_{\mathscr{B}_{\infty, \infty}^{-1}}+\Vert{R}(\widetilde\omega_{q}\cdot,\nabla v)\Vert_{\mathscr{B}_{\infty, \infty}^{-1}}\\
&\lesssim&\Vert\nabla v\Vert_{L^\infty}\Vert\widetilde\omega_{q}\Vert_{\mathscr{B}_{\infty, \infty}^{-1}}+\Vert{R}(\widetilde\omega_{q}^i,\partial_i v)\Vert_{\mathscr{B}_{\infty, \infty}^{-1}}.
\end{eqnarray*}
Exploring explicitly {\bf(ii)} and $\Div\widetilde{\omega}_q(t,x)=0$ we get to 
\begin{eqnarray*}
\Vert{R}(\widetilde\omega_{q}^i, \partial_iv)\Vert_{\mathscr{B}_{\infty, \infty}^{-1}}&=&\Vert\partial_i{R}(\widetilde\omega_{q}^i, v)\Vert_{\mathscr{B}_{\infty, \infty}^{-1}}\\
&\lesssim&\sup_{k}\sum_{j\ge k-3}\Vert\Delta_{j}\widetilde\omega_{q}\Vert_{L^{\infty}}\Vert\widetilde\Delta_{j}v\Vert_{L^{\infty}}\\
&\lesssim& \Vert\widetilde\omega_q\Vert_{\mathscr{B}_{\infty, \infty}^{-1}}\Vert v\Vert_{\mathscr{B}_{\infty, 1}^{1}}.
\end{eqnarray*}
Consequently,
\begin{equation*}
\Vert \widetilde{\omega}_{q}\cdot\nabla v\Vert_{\mathscr{B}_{\infty, \infty}^{-1}}\lesssim\Vert v\Vert_{\mathscr{B}_{\infty,1}^{1}}\Vert \widetilde{\omega}_{q}\Vert_{\mathscr{B}_{\infty, \infty}^{-1}}.
\end{equation*}
We turn to $\int_{0}^{t}e^{-CZ(\tau)}\Big\|\partial_z\Delta_{q}\Big(\frac{B_{\theta}}{r}B\Big)(\tau)\Big\|_{{B}_{\infty, \infty}^{-1}} d\tau$. The fact that $\partial_z:\mathscr{B}_{\infty,\infty}^{0}\rightarrow \mathscr{B}_{\infty,\infty}^{-1}$ is continuous, $ L^\infty\hookrightarrow \mathscr{B}_{\infty,\infty}^{0}$ and H\"older's inequality with respect to time give us

\begin{eqnarray*}
\int_{0}^{t}e^{-CZ(\tau)}\Big\|\partial_z\Big(\Delta_{q}\Big(\frac{B_{\theta}}{r}B\Big)\Big)\Big\|_{\mathscr{B}_{\infty, \infty}^{-1}}d\tau &\lesssim& \bigg\|\Delta_{q}\Big(\frac{B_{\theta}}{r}B\Big) \Big\|_{L^1_t \mathscr{B}_{\infty, \infty}^{0}}\\
&\lesssim& \Big\|\Delta_{q}\Big(\frac{B_{\theta}}{r}B\Big)\Big\|_{L^1_t L^\infty}.
\end{eqnarray*}
Gathering the last two estimates and plugging them in \eqref{Eq:4,21}, so Gronwall's inequality leads to
\begin{eqnarray*}
\Vert\widetilde\omega_q(t)\Vert_{\mathscr{B}_{\infty, \infty}^{-1}}&\lesssim&e^{CZ(t)} \bigg(\Vert\Delta_{q}{\omega}^{0}\Vert_{\mathscr{B}_{\infty, \infty}^{-1}}+\Big\|\Delta_{q}\Big(\frac{B_{\theta}}{r}B\Big)\Big\|_{L^1_t L^\infty}\bigg)\\
&\lesssim& e^{CZ(t)} \bigg(2^{-q}\Vert\Delta_{q}{\omega}^{0}\Vert_{L^{\infty}}+\Big\|\Delta_{q}\Big(\frac{B_{\theta}}{r}B\Big)\Big\|_{L^1_t L^\infty}\bigg).
\end{eqnarray*}
Then it follows
\begin{equation}\label{17}
\Vert\Delta_{k}\widetilde{\omega}_{q}(t)\Vert_{L^{\infty}}\lesssim 2^{ k-q}e^{CZ(t)} \bigg(\Vert\Delta_{q}{\omega}^{0}\Vert_{L^{\infty}}+2^{q} \Big\|\Delta_{q}\Big(\frac{B_{\theta}}{r}B\Big)\Big\|_{L^1_t L^\infty}\bigg).
\end{equation}
Let us now move to estimate (\ref{m17}). Since $v^{\theta}=0$, then
\begin{equation*}
\frac{v^r}{r}=\frac{v^1}{x_1}=\frac{v^2}{x_2},
\end{equation*}
where $(v^1, v^2, 0)$ are the components of $v$ in cartesian basis. According to Proposition \ref{Prop-1} the vector-valued solution $\widetilde\omega_q$ has two components in cartesian basis $\widetilde\omega_{q}^1$ and $\widetilde\omega_{q}^2$. We restrict ourselves to the proof of estimate of the first component. The second one is done in the same way. Since $\tilde\omega_{q}^1$ obeys\begin{equation} \label{18}
\left\{ \begin{array}{ll}
\partial_{t}\widetilde\omega_{q}^1-\mu\Delta\widetilde\omega_{q}^1+(v\cdot\nabla)\widetilde\omega_{q}^1=\dfrac{v^2}{x_2}\widetilde\omega_{q}^1- \partial_z\Big(\Delta_{q}\Big(\frac{B_{\theta}}{r}B^1\Big)\Big),\\
\widetilde\omega_{q}^1\vert_{\vert t=0}=\Delta_{q}\omega_{0}^1,
\end{array} \right.
\end{equation}
where $(B^1,B^2,0)$ denotes the components of $B$ in cartesian basis. Taking advantage again to Proposition \ref{Prop:1.1} for $r=1$ and $p=\infty$ to write
\begin{eqnarray}\label{m19}
e^{-CZ(t)}\Vert\widetilde\omega_{q}^1(t)\Vert_{\mathscr{B}_{\infty, 1}^{1}}&\lesssim&\Vert\Delta_{q}\omega^1(0)\Vert_{\mathscr{B}_{\infty, 1}^{1}}+\int_{0}^{t}e^{-CZ(\tau)}\Big\Vert v^2\frac{\widetilde\omega_{q}^1}{x_2}(\tau)\Big\Vert_{\mathscr{B}_{\infty, 1}^{1}}d\tau\\
&&+\int_{0}^{t}e^{-CZ(\tau)}\Big\|\partial_z\Big(\Delta_{q}\Big(\frac{B_{\theta}}{r}B^1\Big)\Big)(\tau)\Big\|_{\mathscr{B}_{\infty, 1}^{1}}d\tau.\nonumber
\end{eqnarray}
For the second term of the r.h.s. Bony's decomposition implies 
\begin{eqnarray}\label{m20}
\Big\Vert v^2\frac{\widetilde\omega_{q}^1}{x_2}\Big\Vert_{\mathscr{B}_{\infty, 1}^{1}}&\le &\Big\Vert T_{\frac{\widetilde\omega_{q}^1}{x_2}}v^2\Big\Vert_{\mathscr{B}_{\infty, 1}^{1}}+\Big\Vert T_{v^2}\frac{\widetilde\omega_{q}^1}{x_2}\Big\Vert_{\mathscr{B}_{\infty, 1}^{1}}+\Big\Vert{R}\Big(v^2, \frac{\widetilde\omega_{q}^1}{x_2}\Big)\Big\Vert_{\mathscr{B}_{\infty, 1}^{1}}\\
\nonumber&\triangleq& \mathrm{I}_1+\mathrm{I}_2+\mathrm{I}_3.
\end{eqnarray}
To bound $\mathrm{I}_1$, the paraproduct laws and Besov spaces provide us
\begin{equation}\label{m21}
\mathrm{I}_1\lesssim\sum_{k\ge-1}2^k\Big\Vert S_{k-1}\Big(\frac{\widetilde\omega_{q}^1}{x_2}\Big)\Big\Vert_{L^\infty}\Vert\Delta_{k}v^2\Vert_{L^\infty}\lesssim\Vert v\Vert_{\mathscr{B}_{\infty, 1}^{1}}\Big\Vert\frac{\widetilde\omega_{q}^1}{x_2}\Big\Vert_{L^\infty}.
\end{equation}
The term $\mathrm{I}_3$, will be done by a similar way as above. More precisely, we have
\begin{equation}\label{m22}
\mathrm{I}_3\lesssim\sum_{l\ge k-3}2^k\Vert\Delta_{l}v^2\Vert_{L^\infty}\Big\Vert\widetilde{\Delta}_{l}\frac{\widetilde{\omega}_{q}^1}{x_2}\Big\Vert_{L^{\infty}}\lesssim\Vert v\Vert_{{B}_{\infty, 1}^{1}}\Big\Vert\frac{\widetilde{\omega}_{q}^1}{x_2}\Big\Vert_{L^{\infty}}.
\end{equation}
Let us move to estimate $\mathrm{I}_2$ in the following way.
\begin{equation}\label{m23}
\mathrm{I}_2\lesssim\sum_{m\in\NN}2^m\Big\Vert S_{m-1}v^2(x)\Delta_{m}\Big(\frac{\widetilde{\omega}_{q}^1(x)}{x_2}\Big)\Big\Vert_{L^\infty}.
\end{equation}
We check that
\begin{eqnarray*}
\Big\Vert S_{m-1}v^2(x)\Delta_{m}\Big(\frac{\widetilde\omega_{q}^1(x)}{x_2}\Big)\Big\Vert_{L^\infty}&\le&\Big\Vert S_{m-1}v^2(x)\frac{\Delta_{m}\widetilde\omega_{q}^1(x)}{x_2}\Big\Vert_{L^\infty}\\
&&+\Big\Vert S_{m-1}v^2(x)\Big[\Delta_{m}, \frac{1}{x_2}\Big]\widetilde\omega_{q}^1\Big\Vert_{L^\infty}.
\end{eqnarray*}
For the first term of r.h.s., we have
 \begin{equation}\label{v-2}
\Big\Vert S_{m-1}v^2(x)\frac{\Delta_{m}\widetilde\omega_{q}^1(x)}{x_2}\Big\Vert_{L^\infty}\le  \Big\Vert \frac{S_{m-1}v^2}{x_2}\Big\Vert_{L^\infty} \Vert \Delta_{m}\widetilde\omega_{q}^1(x)\Vert_{L^\infty}.
\end{equation}
Proposition's \ref{prop3} tells us that $S_{m-1}v$ is axisymmetric and consequently $S_{m-1}v^2(0,x_{2}, z)=0$. Thus, Taylor's formula yields
$$ \frac{S_{m-1}v^2(x_1,x_2,z)}{x_2} =\int_{0}^{1}(\partial_{x_2}S_{m-1}v^2)(x_1, \tau x_2, x_3)d\tau .$$
From Lemma \ref{Lem:6.1}, we find  that
\begin{eqnarray*}
\Big\Vert \frac{S_{m-1}v^2}{x_2}\Big\Vert_{L^\infty} &\lesssim& \int_{0}^{1}\Vert\partial_{x_2} S_{m-1}v^2 (\cdot,\tau\cdot, \cdot)\Vert_{L^\infty }(1-\log\tau)d\tau\\
&\lesssim&\Vert \nabla  v \Vert_{L^\infty}\int_{0}^{1}(1-\log\tau)d\tau\\
&\lesssim&\Vert \nabla  v \Vert_{L^\infty}.
\end{eqnarray*}
Inserting the last estimate into \eqref{v-2}, we deduce that 
\begin{equation}\label{v-4}
\Big\Vert S_{m-1}v^2(x)\frac{\Delta_{m}\widetilde\omega_{q}^1(x)}{x_2}\Big\Vert_{L^\infty}\le\Vert\nabla v\Vert_{L^{\infty}}\Vert\Delta_{m}\widetilde{\omega}_{q}^1\Vert_{L^{\infty}}.
\end{equation}
Therefore 
\begin{equation}\label{m24}
\sum_{m\in\NN}2^m\Big\Vert S_{m-1}v^2(x)\frac{\Delta_{m}\widetilde{\omega}_{q}^1(x)}{x_2}\Big\Vert_{L^\infty}\le \Vert\nabla v\Vert_{L^{\infty}}\Vert\widetilde\omega_{q}^1\Vert_{{B}_{\infty,1}^{1}}.
\end{equation}
The commutator term be dealt as follows:
\begin{eqnarray*}
S_{m-1}v^2(x)\Big[\Delta_{m}, \frac{1}{x_2}\Big]\widetilde\omega_{q}^1&=&\frac{S_{m-1}v^2}{x_2}2^{3m}\int_{\RR^3}h(2^m(x-y))(x_2-y_2)\frac{\widetilde\omega_{q}^1(y)}{y_2}dy\\
&=&2^{-m}\Big(\frac{S_{m-1}v^2}{x_2}\Big)2^{3m}\widetilde{h}(2^m\cdot)\star\Big(\frac{\widetilde\omega_q^1}{x_2}\Big)(x),
\end{eqnarray*}
where $\widetilde{h}(x)=x_2h(x)$. The following property holds true for every $f\in{\mathscr{S}}'(\mathbb{R}^3)$.
\begin{equation*}
2^{3m}\widetilde{h}(2^m\cdot)\star f=\sum_{|m-k|\leq 1}2^{3m}\widetilde{h}(2^m\cdot)\star\Delta_{k}f.
\end{equation*}
Really, we have
$\widehat{\widetilde{h}}=i\partial_{\xi_2}\hat{h}=i\partial_{\xi_2}\varphi(\xi)$.  Meaning that
$\supp\widehat{\widetilde{h}}\subset\supp\varphi$, so we have
\begin{equation*}
2^{3m}\widetilde{h}(2^m\cdot)\star\Delta_{m}f\equiv0,\, \textnormal{ for }\vert m-k\vert\ge2.
\end{equation*}
Consequently,
\begin{eqnarray}\label{m25}
\sum_{m\in\NN}2^m\Big\Vert S_{m-1}v^2(x)\Big[\Delta_{m}, \frac{1}{x_2}\Big]\widetilde\omega_{q}^1\Big\Vert_{L^\infty}&\lesssim&\sum_{\vert m-k\vert\le1}\Big\Vert\frac{S_{m-1}v^2}{x_2}\Big\Vert_{L^{\infty}}\Big\Vert\Delta_k\Big(\frac{\widetilde{\omega}_{q}^1}{x_2}\Big)\Big\Vert_{L^{\infty}}\\
\nonumber&\lesssim&\Vert\nabla v\Vert_{L^{\infty}}\Big\Vert\frac{\widetilde{\omega}_{q}^1}{x_2}\Big\Vert_{\mathscr{B}_{\infty,1}^0}.
\end{eqnarray}
Adding \eqref{m24} and \eqref{m25} to deduce that
\begin{equation}\label{m26}
\mathrm{I}_{2}\lesssim\Vert\nabla v\Vert_{L^{\infty}}\Big(\Vert\widetilde{\omega}_{q}^1\Vert_{\mathscr{B}_{\infty, 1}^1}+\Big\Vert\frac{\widetilde{\omega}_{q}^1}{x_2}\Big\Vert_{\mathscr{B}_{\infty, 1}^0}\Big).
\end{equation}
Collecting \eqref{m21}, \eqref{m22} and \eqref{m26}, we can write
\begin{equation*}
\Big\Vert v^2\frac{\widetilde{\omega}_{q}^1}{x_2}\Big\Vert_{\mathscr{B}_{\infty, 1}^1}\lesssim\Vert{\nabla} v\Vert_{\mathscr{B}_{\infty, 1}^1}\Big(\Vert\widetilde{\omega}_{q}^1\Vert_{\mathscr{B}_{\infty, 1}^1}+\Big\Vert\frac{\widetilde{\omega}_{q}^1}{x_2}\Big\Vert_{\mathscr{B}_{\infty, 1}^0}\Big).
\end{equation*}
Thanks to (\ref{m19}) and the above estimate one has
\begin{eqnarray}\label{m27}
e^{-CZ(t)}\Vert\widetilde\omega_{q}^1(\tau)\Vert_{\mathscr{B}_{\infty, 1}^{1}}&\lesssim&\Vert\widetilde\omega_{q}^1(0)\Vert_{\mathscr{B}_{\infty, 1}^{1}}
+\int_{0}^{t}e^{-CZ(\tau)}\Vert v(\tau)\Vert_{\mathscr{B}_{\infty, 1}^{1}}
\Vert \widetilde\omega_{q}^1(\tau)\Vert_{\mathscr{B}_{\infty, 1}^{1}}d\tau\\
\nonumber&+&\int_{0}^{t}e^{-CZ(\tau)}\Vert v(\tau)\Vert_{\mathscr{B}_{\infty, 1}^{1}}\Big\Vert \frac{\widetilde\omega_{q}^1(\tau)}{x_2}\Big\Vert_{\mathscr{B}_{\infty, 1}^{0}}d\tau
 +\int_{0}^{t}e^{-CZ(\tau)}\Big\|\partial_z\Big(\Delta_{q}\Big(\frac{B_{\theta}}{r}B^1\Big)\Big)\|_{\mathscr{B}_{\infty, 1}^{1}}d\tau.
\end{eqnarray}
We now analyze the term $\Big\Vert \frac{\widetilde\omega_{q}^1}{x_2}\Big\Vert_{\mathscr{B}_{\infty, 1}^{0}}$. Thanks to Proposition \ref{prop3} and $\widetilde\omega_{q}^{1}(x_1, 0,z)=0$, we claim by Taylor's formula and the Lemma \ref{Lem:6.1} in appendix that
\begin{equation*}
\frac{\widetilde\omega_{q}^{1}}{x_2}=\int_{0}^{1}\partial_{y}\widetilde\omega_{q}^{1}(x_1, \tau x_2, x_3)d\tau.
\end{equation*}
Hence
\begin{eqnarray*}
\Big\Vert \frac{\widetilde\omega_{q}^1}{x_2}\Big\Vert_{\mathscr{B}_{\infty, 1}^{0}}&\lesssim&\int_{0}^{1}\Vert\partial_{y}\widetilde\omega_{q}^{1}\Vert_{{B}_{\infty, 1}^{0}}(1-\log\tau)d\tau\\
&\lesssim&\Vert\widetilde\omega_{q}^{1}\Vert_{\mathscr{B}_{\infty, 1}^{1}}\int_{0}^{1}(1-\log\tau)d\tau\\
&\lesssim&\Vert\widetilde\omega_{q}^{1}\Vert_{\mathscr{B}_{\infty, 1}^{1}}.
\end{eqnarray*}
To close \eqref{m19} we must estimate $\int_{0}^{t}e^{-CZ(\tau)}\big\|\partial_z\big(\Delta_{q}\big(\frac{B_{\theta}}{r}B^1\big)\big)\big\|_{\mathscr{B}_{\infty, 1}^{1}}d\tau$. By H\"older's inequality, Bernstein's lemma and the fact $\partial_{z}:\mathscr{B}^{2}_{\infty,1}\rightarrow\mathscr{B}^{1}_{\infty,1}$ yield
\begin{eqnarray*}
\int_{0}^{t}e^{-CZ(\tau)}\Big\|\partial_z\Big(\Delta_{q}\Big(\frac{B_{\theta}}{r}B^1\Big)\Big)\Big\|_{\mathscr{B}_{\infty, 1}^{1}}d\tau&\lesssim& \Big\|\Delta_{q}\Big(\frac{B_{\theta}}{r}B^1\Big)\Big\|_{L^1_{t}\mathscr{B}_{\infty, 1}^{2}}\\
&\lesssim&2^{2q} \Big\|\Delta_{q}\Big(\frac{B_{\theta}}{r}B^1\Big)\Big\|_{L^1_{t}  L^\infty},\nonumber
\end{eqnarray*}
combined with \eqref{m27}, it happens
\begin{eqnarray*}
e^{-CZ(t)}\Vert\widetilde\omega_{q}^1(\tau)\Vert_{\mathscr{B}_{\infty, 1}^{1}}\lesssim\Vert\widetilde\omega_{q}^1(0)\Vert_{\mathscr{B}_{\infty, 1}^{1}}+\int_{0}^{t}e^{-CZ(\tau)}\Vert v(\tau)\Vert_{\mathscr{B}_{\infty, 1}^{1}}\Vert \widetilde\omega_{q}^1(\tau)\Vert_{\mathscr{B}_{\infty, 1}^{1}}d\tau+2^{2q}\Big\|\Delta_{q}\Big(\frac{B_{\theta}}{r}B^1\Big)\Big\|_{L^1_{t}  L^\infty}.
\end{eqnarray*}
Via, Gronwall's lemma we obtain 
\begin{equation*}
\Vert\widetilde\omega_{q}^1(\tau)\Vert_{\mathscr{B}_{\infty, 1}^{1}}\lesssim e^{CZ(t)}\Big(2^q\Vert\widetilde\omega_{q}^1(0)\Vert_{L^\infty}+2^{2q}\Big\|\Delta_{q}\Big(\frac{B_{\theta}}{r}B^1\Big)\Big\|_{L^1_{t}  L^\infty}\Big).
\end{equation*}
This gives in particular the estimate (\ref{m17})
\begin{equation*}
\Vert\Delta_{k}\widetilde\omega_{q}(t)\Vert_{L^{\infty}}\le C2^{q-k}e^{CZ(t)}\Big(\Vert\Delta_{q}\omega^{0}\Vert_{L^{\infty}}+2^{q}\Big\|\Delta_{q}\Big(\frac{B_{\theta}}{r}B^1\Big)\Big\|_{L^1_{t}  L^\infty}\Big).
\end{equation*}
The proof of item {\bf(iv)} is now achieved.
\end{proof}
At this stage, to reach the Theorem \ref{The:1.2} we require to propagate the persistence of the initial regularity uniformly on the viscosity. Especially, we will prove the following result.

\begin{prop}\label{Prop:4.4}
Let $p\in [2,\infty]$ and $(v^0,B^0)\in (L^2 \cap \mathcal{B}_{p,1}^{1+\frac{3}{p}})  \times (L^2\cap\mathcal{B}_{p,1}^{\frac{3}{p}-1})$   be two  axisymmetric vector fields in divergence-free such that $\omega^0 \in L^\infty,$ and $(\omega^{0}_\theta/r ,B^{0}_\theta/r ) \in L^{3,1} \times ( L^2 \cap L^\infty )$. Then for any smooth solution $(v,B)$ of \eqref{MHD(mu)}, the following assertions are hold.
\begin{enumerate}

\item[{\bf(i)}]For $p=\infty$ and  $t\geq 0 $, we have
\begin{equation*}
\Vert v(t)\Vert_{\mathscr{B}_{\infty, 1}^{1}} +\Vert\omega(t)\Vert_{\mathscr{B}_{\infty, 1}^{0}} \le \Phi_4(t). 
\end{equation*}
\item[{\bf(ii)}]For every $[2,\infty)$ and $t\geq 0 $, we have
\begin{equation*}
\Vert v(t)\Vert_{\mathscr{B}_{p, 1}^{1+\frac{3}{p}}} +\| B\|_{\widetilde{L}^\infty_t \mathcal{B}^{\frac{3}{p}-1}_{p,1}}+ \| B\|_{\widetilde{L}^1_t \mathcal{B}^{\frac{3}{p}+1}_{p,1}} +\Vert\omega(t)\Vert_{\mathscr{B}_{p, 1}^{\frac{3}{p}}} \le \Phi_6(t).
\end{equation*}
\end{enumerate}
\end{prop}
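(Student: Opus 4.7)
The approach splits according to the critical case $(i)$, where the Lipschitz norm of $v$ is the main unknown, and the subcritical case $(ii)$, where higher-regularity propagation is achieved by transport–diffusion estimates. Case $(i)$ will be closed via the vorticity decomposition of Proposition \ref{Prop:4.3}, and case $(ii)$ will then use the resulting Lipschitz bound to run Proposition \ref{Prop:1.1} on both $B$ and $\omega$ at higher Besov regularity.

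For case $(i)$ with $p = \infty$, I would first sum the off-diagonal estimate $(iv)$ of Proposition \ref{Prop:4.3} via a discrete Young convolution in the indices $(k,q)$ to obtain
$$\|\omega(t)\|_{\mathscr{B}^0_{\infty,1}} \lesssim e^{C Z(t)}\Bigl(\|\omega^0\|_{\mathscr{B}^0_{\infty,1}} + \|\Sigma B\|_{\widetilde L^1_t \mathscr{B}^1_{\infty,1}}\Bigr),$$
where $\Sigma = B_\theta / r$ and $Z(t) = \|v\|_{L^1_t \mathscr{B}^1_{\infty,1}}$. The velocity Lipschitz norm is then recovered from the Biot–Savart embedding $\|v\|_{\mathscr{B}^1_{\infty,1}} \lesssim \|v\|_{L^\infty} + \|\omega\|_{\mathscr{B}^0_{\infty,1}}$, with $\|v\|_{L^\infty} \le \Phi_3(t)$ by Proposition \ref{prop:4.2}(iii). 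The delicate sub-step is the control of $\|\Sigma B\|_{\widetilde L^1_t \mathscr{B}^1_{\infty,1}}$: Bony's paraproduct yields
$$\|\Sigma B\|_{\mathscr{B}^1_{\infty,1}} \lesssim \|\Sigma\|_{L^\infty}\|B\|_{\mathscr{B}^1_{\infty,1}} + \|B\|_{L^\infty}\|\Sigma\|_{\mathscr{B}^1_{\infty,1}},$$
where the first factor is bounded via $\|\Sigma\|_{L^\infty_t L^\infty} \le \|\Sigma^0\|_{L^\infty}$ (Proposition \ref{prop:4.1}(iii)) and the embedding $\mathscr{B}^{3/\sigma+1}_{\sigma,1} \hookrightarrow \mathscr{B}^1_{\infty,1}$ for $\sigma > 3$ applied to Proposition \ref{prop:4.2}(ii). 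The symmetric contribution involving $\|\Sigma\|_{\mathscr{B}^1_{\infty,1}}$ requires propagating $\Sigma$ through its own transport–diffusion equation via Proposition \ref{Prop:1.1}, which introduces another factor $e^{CZ(t)}$ inside the Gronwall loop. Closing the coupled inequality yields the bound $\Phi_4(t)$.

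Case $(ii)$ then exploits the Lipschitz control $V(t) = \int_0^t \|\nabla v(\tau)\|_{L^\infty} d\tau \lesssim Z(t) \le \Phi_4(t)$ just obtained. I would apply Proposition \ref{Prop:1.1} to the $B$-equation at regularity $3/p - 1$, treating the forcing $B \cdot \nabla v$ via product laws combined with $V(t) \le \Phi_4(t)$, to obtain $\|B\|_{\widetilde L^\infty_t \mathscr{B}^{3/p-1}_{p,1}} + \|B\|_{\widetilde L^1_t \mathscr{B}^{3/p+1}_{p,1}} \le \Phi_5(t)$. A parallel application of Proposition \ref{Prop:1.1} to equation \eqref{Eq:Om} at regularity $3/p$ then propagates $\omega$: the stretching term $(v^r/r)\,\omega$ is controlled via $\|v^r/r\|_{L^\infty} \le \Phi_1(t)$ (Proposition \ref{prop:4.2}(i)) and the source $\partial_z(B_\theta^2/r)$ via the $B$-bound just established through a product estimate in $\mathscr{B}^{3/p}_{p,1}$. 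Velocity regularity follows from the Biot–Savart embedding $\|v\|_{\mathscr{B}^{1+3/p}_{p,1}} \lesssim \|v\|_{L^2} + \|\omega\|_{\mathscr{B}^{3/p}_{p,1}}$, and a final Gronwall step delivers $\Phi_6(t)$.

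The main obstacle is the control of $\|\Sigma B\|_{\widetilde L^1_t \mathscr{B}^1_{\infty,1}}$ in case $(i)$: this endpoint norm ($s = 1$, $r = 1$) precludes any naive product law and forces one to track $\|\Sigma\|_{\mathscr{B}^1_{\infty,1}}$ in parallel, generating a self-referential Gronwall loop that ultimately stacks the exponentials in $\Phi_4(t)$. Each additional propagation step in case $(ii)$ adds one more exponential layer, accounting for the gap from $\Phi_4$ to $\Phi_6$.
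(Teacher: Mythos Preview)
Your approach to case $(i)$ has a genuine gap that prevents global closure. Summing only the off-diagonal estimate {\bf(iv)} of Proposition~\ref{Prop:4.3} via discrete Young convolution yields
\[
\|\omega(t)\|_{\mathscr{B}^0_{\infty,1}}\lesssim e^{CZ(t)}\Bigl(\|\omega^0\|_{\mathscr{B}^0_{\infty,1}}+\|\Sigma B\|_{\widetilde L^1_t\mathscr{B}^1_{\infty,1}}\Bigr),
\]
and after the Biot--Savart step this feeds back as $Z'(t)\lesssim \Phi_3(t)\,e^{CZ(t)}$. Such a differential inequality blows up in finite time; it cannot produce a global bound of the form $\Phi_4(t)$. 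The paper instead combines {\bf(iii)} and {\bf(iv)}: it splits $\sum_k\sum_q\|\Delta_k\widetilde\omega_q\|_{L^\infty}$ into a far-diagonal piece $|k-q|\ge N$ (handled by {\bf(iv)}, giving $2^{-N}e^{CZ(t)}$) and a near-diagonal piece $|k-q|<N$ (handled by {\bf(iii)}, giving $N\Phi_2(t)$). Choosing $N=[CZ(t)]+1$ produces the \emph{logarithmic} estimate $\|\omega(t)\|_{\mathscr{B}^0_{\infty,1}}\lesssim (1+Z(t))\Phi_2(t)$, which closes by Gronwall to $\Phi_4(t)$. This is exactly Vishik's mechanism, and it is why {\bf(iii)} (with the uniform-in-viscosity bound $\Phi_2(t)$, free of $e^{CZ(t)}$) is indispensable alongside {\bf(iv)}.

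A secondary point: you control $\|\Sigma B\|_{\mathscr{B}^1_{\infty,1}}$ by the symmetric Bony law, which forces you to propagate $\|\Sigma\|_{\mathscr{B}^1_{\infty,1}}$ separately and introduces an extra self-referential loop. The paper avoids this entirely by invoking the one-sided axisymmetric product estimate of Proposition~\ref{prop:7.10},
\[
\Bigl\|\tfrac{B_\theta}{r}B\Bigr\|_{\mathscr{B}^1_{\infty,1}}\lesssim \Bigl\|\tfrac{B_\theta}{r}\Bigr\|_{L^\infty}\|B\|_{\mathscr{B}^1_{\infty,1}},
\]
together with $\|\Sigma\|_{L^\infty_tL^\infty}\le\|\Sigma^0\|_{L^\infty}$ and $\|B\|_{L^1_t\mathscr{B}^1_{\infty,1}}\le\Phi_2(t)$ from Proposition~\ref{prop:4.2}(ii). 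No propagation of $\Sigma$ in $\mathscr{B}^1_{\infty,1}$ is needed.

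Your sketch of case $(ii)$ is essentially the paper's route once $(i)$ is in hand, though the paper handles the product $B\cdot\nabla v$ and the commutator $[\Delta_q,v\cdot\nabla]B$ with a case split $p>3$ versus $2\le p\le 3$, and controls $\omega\cdot\nabla v$ directly in $\mathscr{B}^{3/p}_{p,1}$ via Bony decomposition rather than through $v^r/r$.
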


\begin{proof}{\bf(i)} We fix an integer $N$ which will be judiciously chosen later. With the help of {\bf(i)}-Proposition \ref{Prop:4.3}, we write
\begin{eqnarray}\label{m28}
\Vert\omega(t)\Vert_{\mathscr{B}_{\infty, 1}^{0}}&\le& \sum_{k}\Big\Vert\Delta_{k}\sum_{q}\widetilde{\omega}_{q}(t)\Big\Vert_{L^{\infty}}\\
\nonumber&\le&\sum_{k\ge-1}\sum_{\vert k-q\vert\ge N}\Vert\Delta_{k}\widetilde{\omega}_{q}(t)\Vert_{L^{\infty}}+\sum_{k\ge-1}\sum_{\vert k-q\vert<N}\Vert\Delta_{k}\widetilde{\omega}_{q}(t)\Vert_{L^{\infty}}\triangleq \textnormal{II}_1+\textnormal{II}_2.
\end{eqnarray}
In order to analyze $\mathrm{II}_1$, the item {\bf(iv)}-Proposition \ref{Prop:4.3} claims that
\begin{eqnarray}\label{m29}
\mathrm{II}_1&=& \sum_{k\ge-1}\sum_{\vert k-q\vert\ge N}\Vert\Delta_{k}\widetilde{\omega}_{q}(t)\Vert_{L^{\infty}}  \\ \nonumber
&\lesssim&2^{-N}\bigg(\Vert\omega^{0}\Vert_{\mathscr{B}_{\infty, 1}^{0}}+\Big\|\frac{B_{\theta}}{r}B\Big\|_{L^1_t \mathscr{B}^{1}_{\infty,1}}\bigg)e^{CZ(t)},
\end{eqnarray}
with $Z(t)=\|v\|_{L^1_t\mathscr{B}^{1}_{\infty,1}}$. For the term $\big\|\frac{B_{\theta}}{r}B\big\|_{L^1_t \mathscr{B}^{1}_{\infty,1}}$, {\bf(ii)-}Proposition \ref{prop:7.10} gives 
\begin{eqnarray*}
\Big\|\frac{B_{\theta}}{r}B\Big\|_{L^1_t \mathscr{B}^{1}_{\infty,1}} &\lesssim& \Big\|\frac{B_{\theta}}{r}\Big\|_{L^\infty_t L^\infty} \| B \|_{L^1_t\mathscr{B}^{1}_{\infty,1}}  .
\end{eqnarray*}
Moreover, combining the embedding $  {\mathscr{B}_{\sigma, 1}^{1+\frac{3}{\sigma}}} \hookrightarrow {\mathscr{B}_{\infty, 1}^{1}}$ with {\bf(ii)-}Proposition \ref{prop:4.2} and {\bf(ii)-}Proposition \ref{prop:4.1} to find that
\begin{eqnarray}\label{B-r,B}
\Big\|\frac{B_{\theta}}{r}B\Big\|_{L^1_t \mathscr{B}^{1}_{\infty,1}}  \le \Phi_2(t)  .
\end{eqnarray}
Inserting this estimate into \eqref{m29} leads
\begin{eqnarray}\label{II,1}
\mathrm{II}_1 &\lesssim&2^{-N}\Big(\Vert\omega^{0}\Vert_{\mathscr{B}_{\infty, 1}^{0}}+\Phi_2(t)\Big)e^{CZ(t)}.
\end{eqnarray} 
We bound $\mathrm{II}_2$ by exploiting {\bf(iii)}-Proposition \ref{Prop:4.3} to state
\begin{eqnarray}\label{m29-i}
\mathrm{II}_2&=& \sum_{k\ge-1}\sum_{\vert k-q\vert\le N}\Vert\Delta_{k}\widetilde{\omega}_{q}(t)\Vert_{L^{\infty}}\\ 
\nonumber\\
\nonumber&\lesssim& N\Phi_2(t)\bigg(\Vert\omega^{0}\Vert_{\mathscr{B}_{\infty, 1}^{0}}+\Big\|\frac{B_{\theta}}{r}B\Big\|_{L^1_t\mathscr{B}^{1}_{\infty,1}}\bigg).
\end{eqnarray}
In view of \eqref{B-r,B}, we find  
\begin{eqnarray*}
\mathrm{II}_2  &\lesssim& N\Phi_2(t)\Big(\Vert\omega^{0}\Vert_{\mathscr{B}_{\infty, 1}^{0}} +\Phi_2(t) \Big) .
\end{eqnarray*}
Combining the last estimate with \eqref{II,1}, it follows
\begin{equation*}
\mathrm{II}_1+\mathrm{II}_2\lesssim \big(2^{-N}e^{CZ(t)}+N\Phi_2(t)\big)\big(\Vert\omega^{0}\Vert_{\mathscr{B}_{\infty,1}^0}+\Phi_2(t)\big).
\end{equation*}
Taking $N$ equals to $\big[CZ(t)\big]+1$ to get
\begin{equation}\label{Eq-30}
\Vert\omega(t)\Vert_{\mathscr{B}_{\infty, 1}^{0}}\lesssim\big(Z(t)+1\big)\Phi_2(t).
\end{equation}
To finalize this item, we shall estimate $\Vert v\Vert_{\mathscr{B}_{\infty, 1}^{1}}$. Doing so, the fat that $2^{q}\|\Delta_q v\|_{L^\infty}\approx \|\Delta_q\omega\|_{L^\infty}$ for $q \in \mathbb{N} $, estimate \eqref{Eq-30} and {\bf(iii)-}Proposition \ref{prop:4.2} provide us 
\begin{eqnarray*}
\Vert v(t) \Vert_{\mathscr{B}_{\infty, 1}^{1}}&=&\sum_{q\ge-1}\Vert \Delta_q v(t) \Vert_{L^{\infty}}\\
&\lesssim&\|\Delta_{-1}v (t) \|_{L^\infty}+\sum_{q\ge0}2^q\Vert\Delta_q v(t) \Vert_{L^\infty}\\&\lesssim&\|v(t) \|_{L^\infty}+\sum_{q\ge0}\Vert\Delta_q\omega(t)\|_{L^\infty}\\
&\lesssim &\|v(t)\|_{L^\infty}+\|\omega(t)\|_{\mathscr{B}_{\infty,1}^{0}}
\le \Phi_3(t)\Big(1+\int_0^t\|v(\tau)\|_{\mathscr{B}^{1}_{\infty,1}}d\tau\Big).
\end{eqnarray*}
Hence we get by Gronwall's lemma that
\begin{equation}\label{v-B{1}}
\Vert v(t)\Vert_{\mathscr{B}_{\infty, 1}^{1}}\le \Phi_4(t),
\end{equation}
Combining the last estimate with the embedding  ${\mathscr{B}_{\infty, 1}^{1}} \hookrightarrow \Lip(\mathbb{R}^3) $ we get
\begin{equation}\label{b,v-Lip}
\Vert \nabla v(t)\Vert_{L^\infty} \le \Phi_{4}(t)
\end{equation}
and consequently from \eqref{Eq-30}, we infer that
\begin{equation}\label{Ome-B{1}}
\Vert\omega(t)\Vert_{\mathscr{B}_{\infty, 1}^{0}}\lesssim \Phi_4(t).
\end{equation}
This explains that {\bf(i)} is achieved.

{\bf(ii)}. The dyadic decomposition and the fact $2^{q}\Vert\Delta_{q}v\Vert_{L^p} \approx \Vert\Delta_{q} \omega \Vert_{L^p}$ for $q\in\NN$ enuble us to write
\begin{eqnarray}\label{Eq:6.16}
\Vert v(t)\Vert_{\mathscr{B}_{p, 1}^{1+\frac{3}{p}}} \nonumber &\lesssim&\Vert\Delta_{-1}v\Vert_{L^p}+\sum_{q\in\NN}2^{q\frac{3}{p}}2^{q}\Vert\Delta_{q}v\Vert_{L^p}\\
&\lesssim&\Vert v(t)\Vert_{L^p}+\Vert \omega(t)\Vert_{\mathscr{B}_{p, 1}^{\frac{3}{p}}}.
\end{eqnarray}
To close our claim, we must estimate $\Vert v(t)\Vert_{L^p}$. For this aim, we rewrite $v-$equation as 
\begin{equation*}
(\partial_t -\mu\Delta)v =\mathbb{P}(B\cdot\nabla B-v\cdot\nabla v),
\end{equation*}
where $\mathbb{P}$ refers to Leray's operator. Since for $p\in(1, \infty)$, Riesz's transform maps continuously $L^p$ into itself, then
\begin{eqnarray*}
\Vert v(t)\Vert_{L^p}&\le&\Vert v^0\Vert_{L^p}+\int_0^t\|(B\cdot\nabla B)(\tau)\|_{L^p}+\int_{0}^{t}\Vert (v\cdot\nabla v)(\tau)\Vert_{L^p}d\tau.
\end{eqnarray*}
Since $B=B_\theta \vec{e}_{\theta}$ we have $B\cdot\nabla B =-\frac{(B_\theta)^2}{r}\vec{e}_{r}$, so {\bf(ii)-}Proposition \ref{prop:4.1} and the embedding $\mathscr{B}_{p, 1}^{\frac{3}{p}+1} \hookrightarrow L^p$ help us to write
\begin{equation}
\Vert v(t)\Vert_{L^p} \le \Vert v^0\Vert_{L^p}+\Big\|\frac{B_\theta^0}{r}\Big\|_{L^p}\| B\|_{L^1_t \mathscr{B}_{p, 1}^{\frac{3}{p}+1}}+\int_{0}^{t}\Vert v(\tau)\Vert_{L^p}\Vert\nabla v(\tau)\Vert_{L^\infty}d\tau.
\end{equation} 

Then, Gronwall's inequality gives
\begin{eqnarray}\label{EQ:4.41}
\Vert v(t)\Vert_{L^p}&\lesssim&\Big(\Vert v^0\Vert_{L^p}+\Big\|\frac{B_\theta^0}{r}\Big\|_{L^p}\| B\|_{L^1_t \mathscr{B}_{p, 1}^{\frac{3}{p}+1}} \Big)e^{CV(t)},
\end{eqnarray}

where $V(t)=\|\nabla v\|_{L^1_tL^\infty}$, combined with the last estimate and \eqref{Eq:6.16}, we get
\begin{equation}\label{Eq:v-lp}
\Vert v(t)\Vert_{\mathscr{B}_{p, 1}^{1+\frac{3}{p}}} \le C\Big(\Vert v^0\Vert_{L^p}+\Big\|\frac{B_\theta^0}{r}\Big\|_{L^p}\| B\|_{L^1_t \mathscr{B}_{p, 1}^{\frac{3}{p}+1}} \Big)e^{CV(t)}+\Vert \omega(t)\Vert_{\mathscr{B}_{p, 1}^{\frac{3}{p}}}.
\end{equation}
Now, we treat the quantity $\|\omega\|_{\widetilde{L}^\infty_t\mathcal{B}^\frac{3}{p}_{p,\infty}}.$ For this, we apply Proposition \ref{Prop:1.1} to $\omega-$equation for $s=\frac{3}{p},r=1$ to ensure 
\begin{equation}\label{Es-5}
\| \omega(t)  \|_{\mathcal{B}^\frac{3}{p}_{p,1}} \le Ce^{CV(t)} \bigg(\|  \omega^0 {\|}_{\mathcal{B}^0_{p,1}} + \int_0^t e^{-CV(\tau)}\Big(\| \omega\cdot \nabla v (\tau )  \|_{\mathcal{B}^\frac{3}{p}_{p,1}}+\| \cur( B\cdot \nabla B)(\tau )  \|_{\mathcal{B}^\frac{3}{p}_{p,1}}\Big)d\tau \bigg). 
\end{equation}
First, we prove that 
\begin{equation}\label{m31}
\Vert\omega\cdot\nabla v \Vert_{\mathscr{B}_{p, 1}^{\frac{3}{p}}}\lesssim\Vert\omega\Vert_{\mathscr{B}_{p, 1}^{\frac{3}{p}}}\Vert \nabla v\Vert_{L^{\infty}}.
\end{equation}
For this aim, we explore Bony's decomposition to write 
\begin{equation}\label{m32}
\Vert\omega\cdot\nabla v\Vert_{\mathscr{B}_{p, 1}^{\frac{3}{p}}}\le\Vert T_{\nabla v}\cdot\omega\Vert_{\mathscr{B}_{p, 1}^{\frac{3}{p}}}+\Vert T_{\omega}\cdot\nabla v\Vert_{\mathscr{B}_{p, 1}^{\frac{3}{p}}}+\Vert {R}\big(\omega^i, \partial_i v\big)\Vert_{\mathscr{B}_{p, 1}^{\frac{3}{p}}}.
\end{equation}
By definition of $\Vert {R}\big(\omega^i, \partial_i v\big)\Vert_{\mathscr{B}_{p, 1}^{\frac{3}{p}}}$, we have
\begin{eqnarray*}
\Vert {R}\big(\omega, \nabla v\big)\Vert_{\mathscr{B}_{p, 1}^{\frac{3}{p}}}&\le&\sum_{q\in\NN}2^{q\frac{3}{p}}\sum_{j\ge q-3}\Vert\Delta_{j}\omega\Vert_{L^p}\Vert\Delta_{j}\nabla v\Vert_{L^{\infty}}\\
&\lesssim&\Vert\nabla v\Vert_{L^{\infty}}\sum_{j\ge q-3}2^{(q-j)\frac{3}{p}}2^{j\frac{3}{p}}\Vert\Delta_{j}\omega\Vert_{L^p}\\
&\lesssim&\Vert\nabla v\Vert_{L^{\infty}}\Vert\omega\Vert_{\mathscr{B}_{p, 1}^{\frac{3}{p}}}.
\end{eqnarray*}
We can get the following
\begin{equation*}
\Vert T_{\nabla v}\cdot\omega\Vert_{\mathscr{B}_{p, 1}^{\frac{3}{p}}}\lesssim \|\nabla v\|_{L^\infty}\Vert\omega\Vert_{\mathscr{B}_{p, 1}^{\frac{3}{p}}}.
\end{equation*}
For the third term we write,
\begin{eqnarray*}
\Vert T_{\omega}\cdot\nabla v\Vert_{\mathscr{B}_{p, 1}^{\frac{3}{p}}}&\lesssim&\sum_{q\in\NN}2^{q\frac{3}{p}}\Vert S_{q-1}\omega\Vert_{L^{\infty}}\Vert\nabla\Delta_{q}v\Vert_{L^p}\\
&\lesssim&\Vert\omega\Vert_{L^{\infty}}\sum_{q\in\NN}2^{q\frac{3}{p}}\Vert\Delta_{q}\omega\Vert_{L^p}\\
&\lesssim&\Vert\nabla v\Vert_{L^{\infty}}\Vert\omega\Vert_{{B}_{p, 1}^{\frac{3}{p}}}.
\end{eqnarray*}
Collecting the three last estimates we find \eqref{m31}. Plugging \eqref{m31} in \eqref{Es-5}, one gets
\begin{equation}\label{Eq-31}
\Vert\omega\Vert_{\mathscr{B}_{p, 1}^{\frac{3}{p}}}\lesssim e^{CV(t)} \bigg(\Vert\omega_{0}\Vert_{\mathscr{B}_{p, 1}^{\frac{3}{p}}}+\int_{0}^{t}e^{-CV(t)}\Vert\nabla v(\tau)\Vert_{L^{\infty}}\Vert\omega(\tau)\Vert_{\mathscr{B}_{p, 1}^{\frac{3}{p}}}d\tau+\int_{0}^{t}e^{-CV(\tau)}\Vert \Curl(B\cdot\nabla B)(\tau)\Vert_{\mathscr{B}_{p, 1}^{\frac{3}{p}}}d\tau \bigg).
\end{equation}
The fact that $\Curl( B\cdot \nabla B) =-\partial_z\big(\frac{B_{\theta}}{r} B\big)$ and the continuity of $\partial_z:{\mathcal{B}^{\frac{3}{p}+1}_{p,1}}\rightarrow \mathcal{B}^\frac{3}{p}_{p,1} $ leads to 
\begin{equation*}
\| \cur( B\cdot \nabla B )(\tau )  \|_{\mathcal{B}^\frac{3}{p}_{p,1}} =\Big\|  \partial_z(\frac{B_{\theta} }{r} B)(\tau )\Big\|_{\mathcal{B}^\frac{3}{p}_{p,1}} \lesssim \Big\| \frac{B_{\theta}}{r} B(\tau )\Big\|_{\mathcal{B}^{\frac{3}{p}+1}_{p,1}}.
\end{equation*}
According to {\bf(ii)-}Proposition \ref{prop:7.10} in appendix, one has 
\begin{equation*}
\| \Curl( B\cdot \nabla B )(\tau )  \|_{\mathcal{B}^\frac{3}{p}_{p,1}}\lesssim \Big\| \frac{B_{\theta}}{r}(\tau )\Big\|_{L^\infty}\| B(\tau )\|_{\mathcal{B}^{\frac{3}{p}+1}_{p,1}}.
\end{equation*}
Thanks to {\bf(iii)}-Proportion \ref{prop:4.1}, that is $\| \frac{B^{\theta }}{r}(\tau )\|_{L^\infty} \le \| \frac{B_{\theta}^0}{r}\|_{L^\infty}$, it follows 
\begin{equation}\label{curl(B-B)}
\| \cur( B\cdot \nabla B)(\tau )  \|_{\mathcal{B}^\frac{3}{p}_{p,1}}\lesssim \Big\| \frac{{{B_{\theta}^0}}}{r} \Big\|_{L^\infty}\| B(\tau )\|_{\mathcal{B}^{\frac{3}{p}+1}_{p,1}} .
\end{equation}
Inserting the last estimate in \eqref{Eq-31}, it happens 
\begin{equation*}
e^{-CV(t)}\Vert\omega\Vert_{\mathscr{B}_{p, 1}^{\frac{3}{p}}}\lesssim   \bigg( \Vert\omega^{0}\Vert_{\mathscr{B}_{p, 1}^{\frac{3}{p}}}+\Big\|\frac{B^0}{r}\Big\|_{L^\infty}\|B\|_{L^1_t \mathscr{B}_{p, 1}^{1+\frac{3}{p}}} +\int_{0}^{t}e^{-CV(t)}\Vert\nabla v(\tau)\Vert_{L^{\infty}}\Vert\omega(\tau)\Vert_{\mathscr{B}_{p, 1}^{\frac{3}{p}}}d\tau \bigg).
\end{equation*}
Further, Gronwall's lemma leads
\begin{equation}\label{Eq:6.00.}
\Vert\omega\Vert_{\mathscr{B}_{p, 1}^{\frac{3}{p}}}\le Ce^{CV(t)} \bigg(\Vert\omega^{0}\Vert_{\mathscr{B}_{p, 1}^{\frac{3}{p}}}+\Big\|\frac{B^0}{r}\Big\|_{L^\infty}\|B\|_{L^1_t \mathscr{B}_{p, 1}^{1+\frac{3}{p}}}\bigg).
\end{equation}
By plugging the last estimate into \eqref{Eq:v-lp} yields
\begin{equation}\label{Eq:6.22}
\Vert v(t)\Vert_{\mathscr{B}_{p, 1}^{1+\frac{3}{p}}}\le Ce^{CV(t)} \Big(\Vert v^0\Vert_{L^p}+ \Vert\omega^{0}\Vert_{\mathscr{B}_{p, 1}^{\frac{3}{p}}}+ \Big\|\frac{B_\theta^0}{r}\Big\|_{L^p}\| B\|_{L^1_t \mathscr{B}_{p, 1}^{\frac{3}{p}+1}} \Big).
\end{equation}
To bound the term $\|B\|_{L^1_t \mathscr{B}_{p, 1}^{1+\frac{3}{p}}}$, we distinguish two cases \\
{\bf $\bullet$ Case}: $p>3$. From {\bf(iii)} Proposition \eqref{prop:4.2} and \eqref{b,v-Lip}, we infer that
\begin{eqnarray}\label{4.56}
\| v(t) \|_{ \mathcal{B}^{1+\frac{3}{p}}_{p,1}}  +\| B\|_{\widetilde{L}^\infty_t \mathcal{B}^{\frac{3}{p}-1}_{p,1}}+ \| B\|_{\widetilde{L}^1_t \mathcal{B}^{\frac{3}{p}+1}_{p,1}}  \le \Phi_5(t).
\end{eqnarray}
Plugg the last estimate in \eqref{Eq:6.00.} and using \eqref{b,v-Lip} to conclude that
 $$\|\omega(t) \|_{ \mathcal{B}^{\frac{3}{p}}_{p,1}} \le \Phi_6(t) .$$
{\bf $\bullet$ Case}$: 2 \le p \le 3.$
According to \eqref{Eq:4.21}, we have
\begin{eqnarray}\label{Eq:6.23}
\| B\|_{\widetilde{L}^\infty_t \mathcal{B}^{\frac{3}{p}-1}_{p,1}}+ \| B\|_{\widetilde{L}^1_t \mathcal{B}^{\frac{3}{p}+1}_{p,1}}  &\lesssim&   \| B^0\|_{ \mathcal{B}^{\frac{3}{p}-1}_{p,1}} +    \|\Delta_{-1} B\|_ {L^1_tL^p} + \| B\cdot \nabla v \|_{\widetilde {L}^1_t \mathcal{B}^{\frac{3}{p}-1}_{p,1}} \\&+&\sum_{q\geq 0} 2^{q(\frac{3}{p}-1)} \|[\Delta_q,v\cdot \nabla]B\|_{L^1_t L^{p}}\nonumber .
\end{eqnarray}
For the term $ \| B\cdot \nabla v \|_{ \mathcal{B}^{\frac{3}{p}-1}_{p,1}}$. If $ 2 \le p<3$, we have immediately $\frac{3}{p}-1 >0$,  then $ \mathcal{B}_{p,1}^{\frac{3}{p}-1}\cap L^\infty$ is an algebra, this yields 
\begin{equation*}
\| B\cdot \nabla v \|_{ \mathcal{B}^{\frac{3}{p}-1}_{p,1}} \lesssim \|  \nabla v \|_{L^{\infty}} \| B \|_{ \mathcal{B}^{\frac{3}{p}-1}_{p,1}} + \| B \|_{L^{\infty}}  \|  \nabla v\|_{ \mathcal{B}^{\frac{3}{p}-1}_{p,1}}  
\end{equation*}
If $p=3$, in view of Lemma 5.2 in \cite{hz0} page 20, we have
\begin{equation*}
\|B\cdot\nabla v\|_{\mathscr{B}_{p,1}^{0}}\lesssim \|v\|_{\mathscr{B}_{\infty,1}^{1}}\| B \|_{ \mathcal{B}^{\frac{3}{p}-1}_{p,1}}.
\end{equation*}
Combining the two last estimates and in view of Bernstein's inequality and the embedding $ \mathcal{B}^{\frac{3}{p}+1}_{p,1} \hookrightarrow  \mathcal{B}^{\frac{3}{p}}_{p,1}$, we find 
\begin{equation}\label{61,1}
\|B\cdot\nabla v\|_{\mathcal{B}^{\frac{3}{p}-1}_{p,1}}\lesssim \|v\|_{\mathscr{B}_{\infty,1}^{1}}\| B \|_{ \mathcal{B}^{\frac{3}{p}-1}_{p,1}}+ \| B \|_{L^{\infty}}  \| v\|_{ \mathcal{B}^{\frac{3}{p}+1}_{p,1}}  .
\end{equation}
Let us move to the commutator term in r.h.s. of \eqref{Eq:6.23}. By virtue of Lemma \ref{Lem:2.3} in appendix with $-1<\frac{3}{p}-1) \le 1/2$ provides 
\begin{equation}\label{61,2}
\sum_{q\geq 0} 2^{q(\frac{3}{p}-1)} \|[\Delta_q,v\cdot \nabla]B\|_{L^1_t L^{p}} \lesssim \int_0^t  \|\nabla v(\tau)\|_{L^\infty} \| B(\tau) \|_{ \mathcal{B}^{\frac{3}{p}-1}_{p,1}} d \tau 
\end{equation}
Plugging \eqref{61,1}, \eqref{61,2} into \eqref{Eq:6.23} and using the embeddings $\mathcal{B}^{\frac{3}{p}-1}_{p,1}\hookrightarrow L^p $ and $\mathcal{B}^{1}_{\infty,1}\hookrightarrow \Lip (\RR^3) $ to get 
\begin{eqnarray*}
\| B\|_{\widetilde{L}^\infty_t \mathcal{B}^{\frac{3}{p}-1}_{p,1}}+ \| B\|_{\widetilde{L}^1_t \mathcal{B}^{\frac{3}{p}+1}_{p,1}}  &\lesssim&   \| B^0\|_{ \mathcal{B}^{\frac{3}{p}-1}_{p,1}} +\int_0^t \| B(\tau)\|_{L^\infty} \| v(\tau) \|_{ \mathcal{B}^{\frac{3}{p}+1}_{p,1}} d\tau\\
&& +\int_0^t \big(1+\| v(\tau)\|_{ \mathcal{B}^{1}_{\infty,1}} \big)\| B(\tau) \|_{ \mathcal{B}^{\frac{3}{p}-1}_{p,1}}d\tau.
\end{eqnarray*}
Using Gornwall's inequality, we obtain
\begin{eqnarray}\label{B.b}
\| B\|_{\widetilde{L}^\infty_t \mathcal{B}^{\frac{3}{p}-1}_{p,1}}+ \| B\|_{\widetilde{L}^1_t \mathcal{B}^{\frac{3}{p}+1}_{p,1}}  \lesssim e^{C{\| v\|_{L^1_t \mathcal{B}^{1}_{\infty,1}}}} \bigg(  \| B^0\|_{ \mathcal{B}^{\frac{3}{p}-1}_{p,1}}  +\int_0^t \| B(\tau)\|_{L^\infty}  \| v(\tau) \|_{ \mathcal{B}^{\frac{3}{p}+1}_{p,1}}  d\tau   \bigg).
\end{eqnarray}
Substituting the last estimate in \eqref{Eq:6.22}, we find
\begin{eqnarray*}
\| v(t) \|_{ \mathcal{B}^{\frac{3}{p}-1}_{p,1}} \lesssim e^{C{\| v\|_{L^1_t \mathcal{B}^{1}_{\infty,1}}}} \Big( \Vert v^0\Vert_{L^p}+ \Vert\omega^{0}\Vert_{\mathscr{B}_{p, 1}^{\frac{3}{p}}}+   \|B_\theta^0/r\|_{L^p}\Big( \| B^0\|_{ \mathcal{B}^{\frac{3}{p}-1}_{p,1}} + \int_0^t \| B(\tau)\|_{L^\infty}  \| v(\tau) \|_{ \mathcal{B}^{\frac{3}{p}+1}_{p,1}}  d\tau \Big).
\end{eqnarray*}
According to Gronwall's inequality, we infer that 
\begin{eqnarray*}
\| v(t) \|_{ \mathcal{B}^{1+\frac{3}{p}}_{p,1}}  \le C_{0}e^{C{\| v\|_{L^1_t \mathcal{B}^{1}_{\infty,1}}}\| B\|_{L^1_t L^\infty} } e^{\exp(  C{\| v\|_{L^1_t \mathcal{B}^{1}_{\infty,1}}} )} .
\end{eqnarray*}
Or, from the embedding $ \mathcal{B}^{1+\frac{3}{p}}_{p,1} \hookrightarrow  \mathcal{B}^{1+\frac{3}{\sigma}}_{\sigma,1},$ with $\sigma >3$ and   {\bf(iv.b)-}Proposition \eqref{prop:4.1} and {\bf(i)}, we deduce that 
$$\| v(t) \|_{ \mathcal{B}^{\frac{3}{p}+1}_{p,1}} \le \Phi_6(t)$$
Inserting the last  estimate in \eqref{B.b} consequently,
\begin{eqnarray}\label{4.56}
\| B\|_{\widetilde{L}^\infty_t \mathcal{B}^{\frac{3}{p}-1}_{p,1}} + \| B\|_{\widetilde{L}^1_t \mathcal{B}^{\frac{3}{p}+1}_{p,1}}  \le \Phi_6(t).
\end{eqnarray}
Finally, plugging \eqref{4.56} in \eqref{Eq:6.00.}, we conclude that
 $$\|\omega(t) \|_{ \mathcal{B}^{\frac{3}{p}}_{p,1}} \le \Phi_6(t) .$$
 This finishes the proof.
\end{proof}

\section{Inviscid limit}\label{In:Limit}
\textnormal{This section addresses the inviscid limit of the viscous system \eqref{MHD(mu)} to the inviscid one \eqref{MHD(0)} as soon as the viscosity tends to zero, and quantify the rate of convergence between velocities and magnetic fields. }

\subsection{Proof of Theorem \ref{The:1.2}}

Let $(v_\mu,B_\mu,p_\mu)$ and $(v,B,p)$ be a solution of \eqref{MHD(mu)} and \eqref{MHD(0)} respectively. Setting $\overline{v}_\mu=v_\mu -v, \overline{B}_\mu =B_\mu -B $ and $\overline{p}_\mu =p_\mu-p$. So, an elementary calculus claims that the triplet $(\overline{v}_\mu,\overline{B}_\mu,\overline{p}_\mu )$ satisfies the following evolution system
\begin{displaymath}\label{D-mu}
\left\{ \begin{array}{ll} 
\partial_{t} \overline{v}_\mu+v_\mu \cdot\nabla \overline{v}_\mu=\mu\Delta  v_\mu - \overline{v}_\mu \cdot\nabla v+\overline{B}_\mu\cdot\nabla B +B_\mu \cdot\nabla \overline{B}_\mu -\nabla\overline{p}_\mu,  & \\
\partial_{t}\overline{B}_\mu +v_\mu\cdot\nabla \overline{B}_\mu -\Delta \overline{B}_\mu =-\overline{v}_\mu \cdot\nabla B +\overline{B}_\mu\cdot\nabla v  +B_\mu \cdot\nabla\overline{v}_\mu,  & \\ 
\Div \overline{v}_\mu =0,\Div \overline{B}_\mu=0, &\\ 
({\overline{v}_\mu},{\overline{B}_\mu})_{| t=0}=({\overline{v}^{0}_\mu},{\overline{B}{^0}_\mu}).\tag{D$_\mu$} 
\end{array} \right.
\end{displaymath}
\begin{Rema}
If we think to apply the approach in \cite{hz0} for the axisymmetric Navier-Stokes equations in critical Besov spaces, we find a difficulty at the level of estimation $\|B_{\mu}\cdot\nabla\overline{v}_{\mu}\|_{L^{1}_{t} \mathscr{B}_{p,1}^{0}}$. Furthermore, to bound $\nabla\overline{v}_\mu$ in $\mathscr{B}_{p,1}^{0}$, we need an additional regularity for $\overline{v}_\mu$. Unfortunately, we do not have this advantage, so we shall start by developing the rate of convergence in $\mathcal{B}^{0}_{2,1} $, next we will explore the complex interpolation.
\end{Rema}
\begin{proof} We proceed by steps.

{\bf Step 1}. Performing the $L^2$ scalar product of the first equation with $\overline{v}_\mu$ and integrating by parts over $\RR^3$. Then in view of $\Div \overline{v}_\mu=\Div \overline{B}_\mu$, we obtain 
\begin{eqnarray*}
\frac{1}{2}\frac{d}{dt}\|\overline{v}_\mu(t)\|^2_{L^2}   &=&\mu\int_{\RR^3} \Delta v_\mu \cdot \overline{v}_{\mu} dx-\int_{\RR^3}(\overline{v}_\mu \cdot\nabla v)\cdot \overline{v}_{\mu}dx +\int_{\RR^3} (\overline{B}_\mu\cdot\nabla B)\cdot \overline{v}_{\mu}dx -\int_{\RR^3} (B_\mu \cdot\nabla \overline{v}_\mu)\cdot  \overline{B}_\mu  dx,
\end{eqnarray*}
where, we have used $\int_{\RR^3} (B_\mu \cdot\nabla \overline{B}_\mu )\cdot \overline{v}_{\mu}dx=-\int_{\RR^3} (B_\mu \cdot\nabla \overline{v}_\mu)\cdot  \overline{B}_\mu  dx$.\\
Likewise for $\overline{B}_\mu$, we also get  
\begin{equation*}
\frac{1}{2}\frac{d}{dt}\|\overline{B}_\mu(t)\|^2_{L^2}+\int_{\RR^3} |\nabla \overline{B}_{\mu}(t,x)|^2dx = - \int_{\RR^3}(\overline{v}_\mu \cdot\nabla B)\cdot \overline{B}_\mu  dx +\int_{\RR^3} (\overline{B}_\mu\cdot\nabla v)\cdot \overline{B}_\mu dx +\int_{\RR^3} (B_\mu \cdot\nabla\overline{v}_\mu )\cdot \overline{B}_\mu dx.
\end{equation*}
By summing the last two estimates, so, the fact $\int_0^t\int_{\RR^3} |\nabla \overline{B}_{\mu}(t,x)|^2dxd \tau \ge0 $ yields  
\begin{eqnarray*}
\frac{1}{2}\frac{d}{dt}\Big(\|\overline{v}_\mu(t)\|^2_{L^2}+\|\overline{B}_\mu(t)\|^2_{L^2}\Big) &=& \mu\int_{\RR^3} \Delta v_\mu \cdot \overline{v}_{\mu} dx-\int_{\RR^3}(\overline{v}_\mu \cdot\nabla v)\cdot \overline{v}_{\mu}dx +\int_{\RR^3} (\overline{B}_\mu\cdot\nabla B)\cdot \overline{v}_{\mu}dx\\
&&- \int_{\RR^3}(\overline{v}_\mu \cdot\nabla B)\cdot \overline{B}_\mu  dx +\int_{\RR^3} (\overline{B}_\mu\cdot\nabla v)\cdot \overline{B}_\mu dx.
\end{eqnarray*}

Thanks to the Cauchy-Shwartz inequality, it holds
\begin{align}
\begin{split}
\frac{1}{2}\frac{d}{dt}\Big(\|\overline{v}_\mu(t)\|^2_{L^2}+\|\overline{B}_\mu(t)\|^2_{L^2}\Big)&\le  \| \overline{v}_\mu\|_{ L^2} \big(\mu  \| \Delta v_\mu\|_{L^2}\big) +\| \overline{v}_\mu\|_{L^2}  \|\overline{v}_\mu \cdot\nabla v\|_{L^2} + \| \overline{v}_\mu\|_{ L^2} \|\overline{B}_\mu\cdot\nabla B\|_{L^2}\\& +\| \overline{B}_\mu\|_{L^2} \|\overline{v}_\mu\cdot\nabla B\|_{L^2}+\| \overline{B}_\mu\|_{ L^2} \| \overline{B}_\mu\cdot\nabla v \|_{L^2}.
\end{split}
\end{align}
Now, we intgrate in time to get 
\begin{align}
\begin{split}
\| \overline{v}_\mu {\|}^2_{L^\infty_t L^2}  + \| \overline{B}_\mu {\|}^2_{L^\infty_t L^2}  &\lesssim  \|\overline{v}^0_\mu\|^2_{L^2} +\| \overline{B}^0_\mu\|^2_{L^{2}}+ \| \overline{v}_\mu\|_{L^\infty_t L^2} \big(\mu  \| \Delta v_\mu\|_{L^1_{t}L^2}\big) +\| \overline{v}_\mu\|_{L^\infty_t L^2}  \|\overline{v}_\mu \cdot\nabla v\|_{L^1_{t}L^2} \\&+
\| \overline{v}_\mu\|_{L^\infty_t L^2} \|\overline{B}_\mu\cdot\nabla B\|_{L^1_{t}L^2} +\| \overline{B}_\mu\|_{L^\infty_t L^2} \|\overline{v}_\mu\cdot\nabla B\|_{L^1_{t}L^2} \\ &+
   \| \overline{B}_\mu\|_{L^\infty_t L^2} \| \overline{B}_\mu\cdot\nabla v \|_{L^1_{t}L^2}  .
\end{split}
\end{align}
Young's inequality ensures that 
\begin{align}
\begin{split}
\| \overline{v}_\mu {\|}^2_{L^\infty_t L^2}  + \| \overline{B}_\mu {\|}^2_{L^\infty_t L^2}  &\lesssim  \|\overline{v}^0_\mu\|^2_{L^2} +\| \overline{B}^0_\mu\|^2_{L^{2}} + \big(\mu  \| \Delta v_\mu\|_{L^1_{t}L^2}\big)^2 + \|\overline{v}_\mu \cdot\nabla v\|^2_{L^1_{t}L^2}+\|\overline{B}_\mu\cdot\nabla B\|^2_{L^1_{t}L^2}  \\&+
 \|\overline{v}_\mu\cdot\nabla B\|^2_{L^1_{t}L^2} +\| \overline{B}_\mu\cdot\nabla v \|^2_{L^1_{t}L^2}  .
\end{split}
\end{align}
Again from Young's inequality and last estimate, we find 
\begin{align*}
\begin{split}
2\| \overline{v}_\mu {\|}_{L^\infty_t L^2}  \| \overline{B}_\mu {\|}_{L^\infty_t L^2}  &\le \| \overline{v}_\mu {\|}^2_{L^\infty_t L^2}+ \| \overline{B}_\mu {\|}^2_{L^\infty_t L^2}\\&\lesssim 
\|\overline{v}^0_\mu\|^2_{L^2} +\| \overline{B}^0_\mu\|^2_{L^{2}} + \big(\mu  \| \Delta v_\mu\|_{L^1_{t}L^2}\big)^2 + \|\overline{v}_\mu \cdot\nabla v\|^2_{L^1_{t}L^2}+\|\overline{B}_\mu\cdot\nabla B\|^2_{L^1_{t}L^2}  \\&+
 \|\overline{v}_\mu\cdot\nabla B\|^2_{L^1_{t}L^2} +\| \overline{B}_\mu\cdot\nabla v \|^2_{L^1_{t}L^2}  .
\end{split}
\end{align*}
Gathering the last two estimate and employ that $a^2+b^2+2ab=(a+b)^2$, we end up with
\begin{align*}
\begin{split}
\big(\| \overline{v}_\mu {\|}_{L^\infty_t L^2}  + \| \overline{B}_\mu {\|}_{L^\infty_t L^2}\big)^2  &\lesssim 
\|\overline{v}^0_\mu\|^2_{L^2} +\| \overline{B}^0_\mu\|^2_{L^{2}} + \big(\mu  \| \Delta v_\mu\|_{L^1_{t}L^2}\big)^2 + \|\overline{v}_\mu \cdot\nabla v\|^2_{L^1_{t}L^2}+\|\overline{B}_\mu\cdot\nabla B\|^2_{L^1_{t}L^2}  \\&+
 \|\overline{v}_\mu\cdot\nabla B\|^2_{L^1_{t}L^2} +\| \overline{B}_\mu\cdot\nabla v \|^2_{L^1_{t}L^2}  .
\end{split}
\end{align*}
Consequently,
\begin{align*}
\begin{split}
\mathscr{A}(t) \lesssim  \mathscr{A}(0)+ \mu \int_0^t\| \Delta v_\mu(\tau) {\|}_{L^2}d\tau  + \int_0^t \Big( \|\nabla v (\tau) {\|}_{L^\infty}+\|\nabla B(\tau) {\|}_{L^\infty} \Big)\mathscr{A}(\tau)  d \tau .
\end{split}
\end{align*}
with $\mathscr{A}(t)=\| \overline{v}_\mu {\|}_{L^\infty_t L^2}  + \| \overline{B}_\mu {\|}_{L^\infty_t L^2} $ and $ \mathscr{A}(0)=0$.  Gronwall's inequality leads to
\begin{equation}\label{Est:L^2-con}
\mathscr{A}(t)   \lesssim e^{\int_0^t ( \|\nabla v (\tau) {\|}_{L^\infty}+\|\nabla B(\tau) {\|}_{L^\infty})d \tau } \Big(\mu \int_0^t\| \Delta v_\mu(\tau) {\|}_{L^2} d\tau \Big).
\end{equation}
For the last term of r.h.s. of \eqref{Est:L^2-con}, H\"older's inequality in time variable, the embedding $\mathscr{B}^0_{2,1} \hookrightarrow L^2$ and the continuity of $\Delta : \mathscr{B}^2_{2,1} \rightarrow  \mathscr{B}^0_{2,1} $ allow us to write
$$
\| \overline{v}_\mu {\|}_{ L^{\infty}_t L^2}  + \| \overline{B}_\mu  {\|}_{L^{\infty}_t L^2}  \lesssim e^{\int_0^t ( \|\nabla v (\tau) {\|}_{L^\infty}+\|\nabla B(\tau) {\|}_{L^\infty})d \tau } \Big( (\mu t) \|  v_\mu {\|}_{L^{\infty}_t \mathscr{B}^2_{2,1}} \Big).
$$
Concerning the term $\|v_\mu {\|}_{L^{\infty}_t \mathscr{B}^{2}_{2,1}}$, we explore the fact that $\mathscr{B}^{\frac{5}{2}}_{2,1}\hookrightarrow  \mathscr{B}^2_{2,1} $ and  Proposition \ref{Prop:4.4} for $p=2$ to obtain
$$ \|  v_\mu {\|}_{L^{\infty}_t \mathscr{B}^2_{2,1}}  \le \|  v_\mu \|_{L^{\infty}_t \mathscr{B}^{\frac{5}{2}}_{2,1}} \le \Phi_6(t). $$ 
From the  last three estimates and  \eqref{Es:B,b}, \eqref{b,v-Lip}, we infer that
\begin{equation}\label{rote-L^2}
\| \overline{v}_\mu {\|}_{L^{\infty}_t L^2}  + \| \overline{B}_\mu {\|}_{L^{\infty}_t L^2}   \le  (\mu t) \Phi_6(t).
\end{equation}
Now, we give the rate of convergence of velocities and magnetic fields in $L^\infty(\RR_+, \mathcal{B}^{0}_{2,1})$. To do so, using  the definition of $\mathcal{B}^{0}_{2,1}$ to write 
\begin{eqnarray*}
\|\overline{B}_\mu\|_{\mathcal{B}^{0}_{2,1}}= \sum _{q\ge -1} \|\Delta_{q} \overline{B}_\mu\|_{L^2}=\sum _{q\ge -1} 2^{-\frac{q}{4}}\|\Delta_{q} \overline{B}_\mu\|^{\frac{1}{2}}_{L^2}2^{\frac{q}{4}}\|\Delta_{q} \overline{B}_\mu\|^{\frac{1}{2}}_{L^2}
\end{eqnarray*}
From Cauchy-Shwartz's inequality for the series, it follows  
\begin{eqnarray*}
\|\overline{B}_\mu\|_{\mathcal{B}^{0}_{2,1}}&\lesssim& \Big( \sum _{q\ge -1} 2^{-\frac{q}{2}} \|\Delta_{q} \overline{B}_\mu\|_{L^2} \Big)^{\frac{1}{2}} \Big( \sum _{q\ge -1} 2^{\frac{q}{2}} \|\Delta_{q} \overline{B}_\mu\|_{L^2} \Big)^{\frac{1}{2}}\nonumber\\
  &\lesssim& \|\overline{B}_\mu\|^\frac{1}{2}_{\mathcal{B}^{-\frac{1}{2}}_{2,1}} \|\overline{B}_\mu\|^\frac{1}{2}_{\mathcal{B}^{\frac{1}{2}}_{2,1}}.
\end{eqnarray*}
Further, the embedding $L^2 \hookrightarrow \mathcal{B}^{-\frac{1}{2}}_{2,1}$, implies
\begin{eqnarray}\label{Est:B-0}
\|\overline{B}_\mu\|_{\mathcal{B}^{0}_{2,1}} \lesssim \|\overline{B}_\mu\|^\frac{1}{2}_{L^2}\big( \|B_\mu\|_{\mathcal{B}^{\frac{1}{2}}_{2,1}}+\|B\|_{\mathcal{B}^{\frac{1}{2}}_{2,1}}\big)^\frac{1}{2}.
\end{eqnarray}
Similarly for $\overline{v}_\mu$, we write 
\begin{eqnarray*}
\|\overline{v}_\mu\|_{\mathcal{B}^{0}_{2,1}} \lesssim \|\overline{v}_\mu\|^\frac{1}{2}_{L^2}\big( \|v_\mu\|_{\mathcal{B}^{\frac{1}{2}}_{2,1}}+\|v\|_{\mathcal{B}^{\frac{1}{2}}_{2,1}}\big)^\frac{1}{2}.
\end{eqnarray*}
Or, the embedding $\mathcal{B}^{\frac{5}{2}}_{2,1} \hookrightarrow \mathcal{B}^{\frac{1}{2}}_{2,1}$, implies that
\begin{eqnarray}\label{v-0-2}
\|\overline{v}_\mu\|_{\mathcal{B}^{0}_{2,1}} \lesssim  \|\overline{v}_\mu\|^\frac{1}{2}_{L^2}\big( \|v_\mu\|_{\mathcal{B}^{\frac{5}{2}}_{2,1}}+\|v\|_{\mathcal{B}^{\frac{5}{2}}_{2,1}}\big)^\frac{1}{2}.
\end{eqnarray}
Gathering \eqref{Est:B-0} and \eqref{v-0-2}, next we apply {\bf(ii)-}Preposition's \ref{Prop:4.4} for $p=2$, we find 
\begin{equation*}
\|\overline{v}_\mu\|_{L^{\infty}_{t} \mathcal{B}^{0}_{2,1}} +\|\overline{B}_\mu\|_{L^{\infty}_{t} \mathcal{B}^{0}_{2,1}}\le \big( \|\overline{v}_\mu\|^\frac{1}{2}_{L^{\infty}_{t} L^2} +\|\overline{B}_\mu\|^\frac{1}{2}_{L^{\infty}_{t} L^2} \big)\Phi_6(t)
\end{equation*}
Thanks to \eqref{rote-L^2}, we deduce that
\begin{eqnarray}\label{B-2.0}
\|\overline{v}_\mu\|_{L^{\infty}_{t} \mathcal{B}^{0}_{2,1}} +\|\overline{B}_\mu\|_{L^{\infty}_{t} \mathcal{B}^{0}_{2,1}} \le (\mu t)^\frac{1}{2} \Phi_6(t) .
\end{eqnarray}
{\bf Step 2.} In this step we will evaluate the rate convergence \eqref{Rate} in the resolution space via the complex interpolation. For this aim,  let $N\in\NN$ be an integer will be judiciously chosen. By definition of $\mathscr{B}^{0}_{p,1}$ and Bernstein's inequality, we have     
\begin{eqnarray}\label{Int-v}
\|\overline{v}_\mu\|_{\mathscr{B}^{0}_{p,1}}&= & \sum _{q\ge -1} \|\Delta_{q} \overline{v}_\mu\|_{L^p}\nonumber\\
  &\le&  \sum _{q\le N} \|\Delta_{q} \overline{v}_\mu\|_{L^p}+\sum _{q>N} \|\Delta_{q} \overline{v}_\mu\|_{L^p} \nonumber \\
&\lesssim& \sum _{q\le N} 2^{q(\frac{3}{2}-\frac{3}{p})}\|\Delta_{q} \overline{v}_\mu\|_{L^2}+\sum _{q > N}2^{-q(1+\frac{3}{p})} 2^{q\frac{3}{p}} \|\nabla \Delta_{q} \overline{v}_\mu\|_{L^p} \nonumber \\
&\lesssim& 2^{N(\frac{3}{2}-\frac{3}{p})} \|\overline{v}_\mu\|_{\mathcal{B}^{0}_{2,\infty}}+2^{-N(1+\frac{3}{p})}  \|\overline{v}_\mu\|_{\mathcal{B}^{1+\frac{3}{p}}_{p,\infty}}.
\end{eqnarray}
Taking $N$ such that 
$$2^{\frac{5}{2} N}=\frac{ \|\overline{v}_\mu\|_{\mathcal{B}^{1+\frac{3}{p}}_{p,1}} }{\|\overline{v}_\mu\|_{\mathcal{B}^{0}_{2,\infty}}}.$$
Inserting the last estimate in \eqref{Int-v}, we obtain 
\begin{equation*}
\|\overline{v}_\mu\|_{\mathscr{B}^{0}_{p,1}} \lesssim  \|\overline{v}_\mu\|^{\frac{2}{5}+\frac{6}{5p}}_{\mathcal{B}^{0}_{2,\infty}} \|\overline{v}_\mu\|^{\frac{3}{5}-\frac{6}{5p}}_{\mathcal{B}^{1+\frac{3}{p}}_{p,\infty}}.
\end{equation*}
Since $\mathcal{B}^{s}_{p,1} \hookrightarrow \mathcal{B}^{s}_{p,\infty} $, then Proposition's \ref{Prop:4.4} yields
\begin{eqnarray*}
\|\overline{v}_\mu\|_{L^{\infty}_{t} \mathscr{B}^{0}_{p,1}} &\lesssim&  \|\overline{v}_\mu\|^{\frac{2}{5}+\frac{6}{5p}}_{L^{\infty}_{t}\mathcal{B}^{0}_{2,1}} \big( \|v \|_{L^{\infty}_{t}\mathcal{B}^{1+\frac{3}{p}}_{p,1}} + \|v_\mu\|_{L^{\infty}_{t}\mathcal{B}^{1+\frac{3}{p}}_{p,1}} \big)^{\frac{3}{5}-\frac{6}{5p}}\\&\le&  \|\overline{v}_\mu\|^{\frac{2}{5}+\frac{6}{5p}}_{L^{\infty}_{t}\mathcal{B}^{0}_{2,1}} \Phi_6(t).
\end{eqnarray*}
Owing to \eqref{B-2.0}, we infer that 
\begin{equation}\label{rote-v-L^p}
\|\overline{v}_\mu\|_{L^{\infty}_{t}\mathscr{B}^{0}_{p,1}} \le  (\mu t)^{\frac{1}{5}+\frac{3}{5p}} \Phi_6(t). 
\end{equation}
For the term $\|\overline{B}_\mu\|_{L^{\infty}_{t}\mathcal{B}^{-1}_{p,1}}$, we distinguish two cases, first $  p > 6$ and $p \le 6$.\\  For $p>6$ we proceed by the same argument as above combined with Bernstein's inequality implies that 
\begin{eqnarray*}\label{Int-B-1}
\|\overline{B}_\mu\|_{\mathcal{B}^{-1}_{p,1} }  &\le&  \sum _{q\le N} 2^{-q}\|\Delta_{q} \overline{B}_\mu\|_{L^p}+\sum _{q > N} 2^{-q} \|\Delta_{q} \overline{B}_\mu\|_{L^p} \nonumber \\
&\lesssim& \sum _{q\le N} 2^{q(\frac{1}{2}-\frac{3}{p})}\|\Delta_{q} \overline{B}_\mu\|_{L^2}+\sum _{q > N}  2^{-q\frac{3}{p}} 2^{q(\frac{3}{p} -1)} \| \Delta_{q} \overline{B}_\mu\|_{L^p} \nonumber \\
&\lesssim& 2^{N(\frac{1}{2}-\frac{3}{p})} \|\overline{B}_\mu\|_{\mathcal{B}^{0}_{2,\infty}}+2^{-N \frac{3}{p}}  \|\overline{B}_\mu\|_{\mathcal{B}^{\frac{3}{p}-1}_{p,\infty}}.
\end{eqnarray*}
Choosing  $N$ such that 
$$2^{\frac{1}{2} N}=\frac{ \|\overline{B}_\mu\|_{\mathcal{B}^{\frac{3}{p}-1}_{p,\infty}}}{\|\overline{B}_\mu\|_{\mathcal{B}^{0}_{2,\infty}}}.$$
Combining with the last two estimates, it holds 
\begin{eqnarray}
\|\overline{B}_\mu\|_{\mathcal{B}^{-1}_{p,1} }  &\lesssim& \|\overline{B}_\mu\|^{\frac{6}{p}}_{\mathcal{B}^{0}_{2,\infty}} \|\overline{B}_\mu\|^{1-\frac{6}{p}}_{\mathcal{B}^{\frac{3}{p}-1}_{p,\infty}}  . 
\end{eqnarray}

From {\bf(ii)-}Proposition \ref{Prop:4.4}, it follows
\begin{eqnarray*}\label{p>6}
\|\overline{B}_\mu\|_{L^{\infty}_{t}\mathcal{B}^{-1}_{p,1} }  &\le & \|\overline{B}_\mu\|^{\frac{6}{p}}_{L^{\infty}_{t}\mathcal{B}^{0}_{2,1}} \Phi_6(t) . 
\end{eqnarray*}  
For the second case, $2<p \le 6$, using the embeddings  $\mathcal{B}^{0}_{2,1} \hookrightarrow \mathcal{B}^{\frac{1}{2}-\frac{3}{p}}_{2,1} \hookrightarrow  \mathcal{B}^{-1}_{p,1}$ to write 
\begin{equation*}\label{p=<6}
\|\overline{B}_\mu\|_{L^{\infty}_{t} \mathcal{B}^{-1}_{p,1}} \lesssim \|\overline{B}_\mu\|_{ L^{\infty}_{t} \mathcal{B}^{\frac{1}{2}-\frac{3}{p}}_{2,1}} \lesssim \|\overline{B}_\mu\|_{L^{\infty}_{t} \mathcal{B}^{0}_{2,1}},
\end{equation*}
combined the last two estimates with \eqref{B-2.0} to conclude that
\begin{equation*}
\|\overline{B}_\mu\|_{L^{\infty}_{t} \mathcal{B}^{-1}_{p,1}}\le
\left\{\begin{array}{ll}
(\mu t)^{\frac{1}{2}} \Phi_6(t), \quad& \textrm{if $p \le 6 ,$}  \\(\mu t)^{\frac{6}{2p}} \Phi_6(t),&\textrm{if $p>6$}.
\end{array}
\right.
\end{equation*}
Consequently, we get 
\begin{equation}\label{rote-B-1}
\|\overline{B}_\mu\|_{L^{\infty}_{t} \mathcal{B}^{-1}_{p,1}}\le (\mu t)^{\frac{6}{2\max(p,6)}} \Phi_6(t) .
\end{equation}
To finalize, let us  move to the term $\|\overline{B}_\mu\|_{L^{1}_{t} \mathcal{B}^{1}_{p,1}}$. Again Bernstein's inequality yields 
\begin{eqnarray}\label{Int-B+1}
\|\overline{B}_\mu\|_{\mathcal{B}^{1}_{p,1} }  &\le&  \sum _{q\le N} 2^{q}\|\Delta_{q} \overline{B}_\mu\|_{L^p}+\sum _{q> N} 2^{q} \|\Delta_{q} \overline{B}_\mu\|_{L^p} \nonumber \\
&\lesssim& \sum _{q\le N} 2^{q(\frac{5}{2}-\frac{3}{p})}\|\Delta_{q} \overline{B}_\mu\|_{L^2}+\sum _{q > N}  2^{-q\frac{3}{p}} 2^{q\frac{3}{p}} \| \Delta_{q} \overline{B}_\mu\|_{L^p} \nonumber \\
&\lesssim& 2^{N(\frac{5}{2}-\frac{3}{p})} \|\overline{B}_\mu\|_{\mathcal{B}^{0}_{2,\infty}}+2^{-N \frac{3}{p}}  \|\overline{B}_\mu\|_{\mathcal{B}^{\frac{3}{p}+1}_{p,\infty}}.
\end{eqnarray}
We choose  $N$ such that 
$$2^{\frac{5}{2} N}=\frac{ \|\overline{B}_\mu\|_{\mathcal{B}^{1+\frac{3}{p}}_{p,\infty}} }{\|\overline{B}_\mu\|_{\mathcal{B}^{0}_{2,\infty}}}.$$
Plugging the above estimate int  \eqref{Int-B+1}, we get 
\begin{eqnarray*}
\|\overline{B}_\mu\|_{\mathcal{B}^{1}_{p,1} }  &\lesssim& \|\overline{B}_\mu\|^{\frac{6}{5p}}_{\mathcal{B}^{0}_{2,\infty}} \|\overline{B}_\mu\|^{1-\frac{6}{5p}}_{\mathcal{B}^{1+\frac{3}{p}}_{p,\infty}}  .
\end{eqnarray*}
In view of \eqref{B-2.0} and {\bf(ii)-}Proposition \ref{Prop:4.4}, we deduce that
\begin{equation}\label{rote-B+1}
\|\overline{B}_\mu\|_{L^{1}_{t} \mathcal{B}^{1}_{p,1}}  \le  (\mu t)^{\frac{3}{5p}} \Phi_6(t).
\end{equation}
Finally gathering \eqref{rote-v-L^p},\eqref{rote-B-1} and \eqref{rote-B+1}, we find the desired result .\\
This completes the proof of  Theorem \ref{The:1.2}.
\end{proof}
\section{Appendix}
\hspace{0.5cm}
We state a technical lemma about the scaling in Besov space. For the proof, we refer to \cite[Proposition A.1]{AKH}.
\begin{lem}\label{Lem:6.1}
 Let $ f : \mathbb{R}^3 \rightarrow \mathbb{R}$ be a function belonging to $\mathscr{B}^{0}_{\infty
,1}$ and take $f_{\lambda}( x_1; x_2; x_3)
= f(\lambda x_1; x_2; x_3) $ with $ \lambda \in  (0; 1)$. Then, there exists an absolute constant $C>0$ such that the following inequality holds:
$$ \|f_\lambda\|_{\mathscr{B}^{0}_{\infty ,1}} \le C(1-\log\lambda) \|f\|_{\mathscr{B}^{0}_{\infty ,1}} . $$
\end{lem}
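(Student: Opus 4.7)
The plan is to analyze how the anisotropic scaling $(x_1,x_2,x_3)\mapsto(\lambda x_1,x_2,x_3)$ acts on the Littlewood--Paley decomposition and to extract a purely logarithmic loss in $\lambda$. First I would write $f=\sum_{k\ge -1}\Delta_k f$ so that $f_\lambda=\sum_{k\ge -1}(\Delta_k f)_\lambda$ and $\Delta_q f_\lambda=\sum_{k\ge -1}\Delta_q\bigl((\Delta_k f)_\lambda\bigr)$ for every $q\ge -1$. The key observation is that
$$
\widehat{(\Delta_k f)_\lambda}(\xi)=\lambda^{-1}\widehat{\Delta_k f}\bigl(\xi_1/\lambda,\xi_2,\xi_3\bigr),
$$
so the spectrum of $(\Delta_k f)_\lambda$ is contained in the ellipsoidal shell
$$
E_k:=\bigl\{\xi\in\mathbb{R}^3 : (\xi_1/\lambda)^2+\xi_2^2+\xi_3^2\sim 2^{2k}\bigr\}.
$$
Because $\lambda\in(0,1)$, the Euclidean norm $|\xi|$ over $E_k$ ranges between order $\lambda 2^k$ (attained where $\xi_2=\xi_3=0$) and order $2^k$ (where $\xi_1=0$).

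The second step is to exploit this spectral localization to restrict the interaction between $\Delta_q$ and $(\Delta_k f)_\lambda$. Since the Fourier support of $\Delta_q$ sits in the annulus $|\xi|\sim 2^q$ for $q\ge 0$ (and in the ball $|\xi|\lesssim 1$ for $q=-1$), there exists an absolute constant $N_0$ such that
$$
\Delta_q\bigl((\Delta_k f)_\lambda\bigr)\equiv 0 \qquad\text{unless}\qquad q+\log_2\lambda-N_0\le k\le q+N_0.
$$
Combining the uniform estimate $\|\Delta_q g\|_{L^\infty}\lesssim\|g\|_{L^\infty}$ with the elementary scaling invariance $\|(\Delta_k f)_\lambda\|_{L^\infty}=\|\Delta_k f\|_{L^\infty}$, one obtains
$$
\|\Delta_q f_\lambda\|_{L^\infty}\le C\sum_{\substack{k\ge -1\\ q+\log_2\lambda-N_0\le k\le q+N_0}}\|\Delta_k f\|_{L^\infty}.
$$

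Finally I would sum in $q$ and interchange the two summations. For each fixed $k\ge -1$, the set of admissible $q\ge -1$ in the above window has cardinality at most $C(1-\log\lambda)$ (the $-\log_2\lambda$ increment is exactly what one expects from the flatness of $E_k$ in the $\xi_1$--direction), hence
$$
\|f_\lambda\|_{\mathscr{B}^{0}_{\infty,1}}=\sum_{q\ge -1}\|\Delta_q f_\lambda\|_{L^\infty}\le C(1-\log\lambda)\sum_{k\ge -1}\|\Delta_k f\|_{L^\infty}=C(1-\log\lambda)\|f\|_{\mathscr{B}^{0}_{\infty,1}},
$$
which is the claimed bound.

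I do not anticipate any serious obstacle in this proof: the only technical care lies in the boundary index $q=-1$, where one must check separately that only $O(1-\log\lambda)$ values of $k$ contribute, which follows because $E_k$ meets the ball $|\xi|\lesssim 1$ precisely when $\lambda 2^k\lesssim 1$, i.e.\ $k\lesssim \log_2(1/\lambda)$, and in tracking the universal constant $N_0$ through the support conditions. Everything else is a Bernstein-type bookkeeping argument with no nonlinear step.
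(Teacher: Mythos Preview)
Your approach is correct and is precisely the standard argument; the paper itself does not supply a proof of this lemma but refers to \cite[Proposition~A.1]{AKH}, where the same Littlewood--Paley support analysis is carried out. There is, however, a small sign slip in your interaction window. Since the anisotropic dilation $x_1\mapsto\lambda x_1$ with $\lambda\in(0,1)$ \emph{stretches} in physical space and therefore \emph{lowers} the frequency in the $\xi_1$--direction, the Euclidean norm on your ellipsoid $E_k$ indeed ranges over $[\lambda 2^k,2^k]$, and the condition $\Delta_q\bigl((\Delta_k f)_\lambda\bigr)\not\equiv 0$ forces $2^q$ to lie in $[\lambda 2^k,2^k]$ up to fixed constants, i.e.
\[
q-N_0\;\le\;k\;\le\;q-\log_2\lambda+N_0,
\]
rather than the window you wrote (your $\log_2\lambda$ correction appears on the wrong endpoint, allowing $k$ below $q$ instead of above). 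This does not affect the conclusion: the window still has length $O(1-\log\lambda)$, the intermediate bound $\|\Delta_q f_\lambda\|_{L^\infty}\le C\sum_{k}\|\Delta_k f\|_{L^\infty}$ over the corrected range is valid, and after swapping the sums the count of admissible $q$'s for each fixed $k$ is again $O(1-\log\lambda)$, so the final inequality follows exactly as you describe.
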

Next, we give boundedness for a product between two quantities in Besov and $L^\infty$ spaces; for the proof, we refer again to \cite[Proposition A.1.]{Hassainia}.
\begin{prop}\label{prop:7.10}Let $B$ be a smooth axisymmetric vector field with a trivial radial component,$B^r=0$. Let $p\in[1,\infty]$. Then we have the following estimate: 

\begin{equation*}
\Big\|\frac{B_{\theta}}{r}B\Big\|_{\mathscr{B}_{p, 1}^{1+\frac{3}{p}}}\lesssim\Big\|\frac{B}{r}\Big\|_{L^\infty}\|B\|_{ \mathscr{B}_{p, 1}^{1+\frac{3}{p}}}.
\end{equation*}
¨
\end{prop}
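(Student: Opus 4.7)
My plan is to apply Bony's decomposition to the product $\frac{B_{\theta}}{r}B$ and then exploit the axisymmetric structure $B=B_{\theta}\vec{e}_{\theta}$ to control the pieces involving $B_{\theta}/r$, which is only a priori bounded in $L^{\infty}$. Concretely, I would write
\begin{equation*}
\frac{B_{\theta}}{r}B \;=\; T_{B_{\theta}/r}B \;+\; T_{B}\Big(\frac{B_{\theta}}{r}\Big) \;+\; \mathscr{R}\Big(\frac{B_{\theta}}{r},B\Big)
\end{equation*}
and treat the three contributions separately.

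For the low-high paraproduct $T_{B_{\theta}/r}B$, I would apply the classical estimate $\|T_{f}g\|_{\mathscr{B}_{p,1}^{s}}\lesssim \|f\|_{L^{\infty}}\|g\|_{\mathscr{B}_{p,1}^{s}}$, valid for every $s\in\mathbb{R}$. Noting that $B^{r}=B^{z}=0$ forces $|B|=|B_{\theta}|$ pointwise, so that $\|B_{\theta}/r\|_{L^{\infty}}=\|B/r\|_{L^{\infty}}$, this already yields the desired bound $\|T_{B_{\theta}/r}B\|_{\mathscr{B}_{p,1}^{1+3/p}}\lesssim \|B/r\|_{L^{\infty}}\|B\|_{\mathscr{B}_{p,1}^{1+3/p}}$, with no regularity assumption on $B_{\theta}/r$.

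The main obstacle is the high-low paraproduct $T_{B}(B_{\theta}/r)$ and the remainder $\mathscr{R}(B_{\theta}/r,B)$, since the usual estimates for these pieces require $B_{\theta}/r$ to have positive Besov regularity, which is not part of the hypotheses. To handle them, I would use the Cartesian identities $B^{1}=-x_{2}\,\frac{B_{\theta}}{r}$ and $B^{2}=x_{1}\,\frac{B_{\theta}}{r}$, together with the vanishing properties $B^{1}(x_{1},0,z)=0$ and $B^{2}(0,x_{2},z)=0$ guaranteed by Proposition \ref{prop3}. Componentwise this gives
\begin{equation*}
\frac{B_{\theta}}{r}B^{1}=-x_{2}\Big(\frac{B_{\theta}}{r}\Big)^{2},\qquad \frac{B_{\theta}}{r}B^{2}=x_{1}\Big(\frac{B_{\theta}}{r}\Big)^{2},
\end{equation*}
so that $T_{B}(B_{\theta}/r)+\mathscr{R}(B_{\theta}/r,B)$ can be recast as a combination of terms in which one factor carries all the Besov regularity of $B$, and the other factor is either bounded in $L^{\infty}$ by $\|B/r\|_{L^{\infty}}$, or reduces to such after one Taylor expansion $B^{j}/x_{i}=\int_{0}^{1}(\partial_{x_{i}}B^{j})(\ldots,\tau x_{i},\ldots)\,d\tau$ combined with the scaling estimate of Lemma \ref{Lem:6.1}. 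This is exactly the mechanism used in Subsection \ref{Lipschitz-V} for the term $S_{m-1}v^{2}/x_{2}$.

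Summing the three contributions then produces the announced inequality. The crux of the argument is genuinely the treatment of $T_{B}(B_{\theta}/r)$ and the remainder: it is the axisymmetric identities $B^{i}=\pm x_{j}(B_{\theta}/r)$, together with the vanishing of $B^{i}$ on the coordinate hyperplane $\{x_{j}=0\}$, that allow us to transfer the Besov regularity from $B$ to the scalar factor $B_{\theta}/r$ and close the estimate with only the $L^{\infty}$ norm of $B/r$ on the right-hand side.
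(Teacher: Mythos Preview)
The paper does not supply its own proof of this estimate; it defers entirely to \cite[Proposition~A.1]{Hassainia}. So there is no in-paper argument to compare against line by line.

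As a standalone sketch, your plan is reasonable and invokes the right machinery (Bony decomposition, the axisymmetric identities $B^{1}=-x_{2}\,B_{\theta}/r$, $B^{2}=x_{1}\,B_{\theta}/r$, Taylor expansion together with Lemma~\ref{Lem:6.1}), all of which appear in closely related form in Subsection~\ref{Lipschitz-V}. Two small points are worth flagging. First, the hypothesis is only $B^{r}=0$, not $B^{r}=B^{z}=0$ as you write; your Cartesian identities for $B^{1},B^{2}$ still hold under $B^{r}=0$ alone, and for the norm comparison the inequality $\|B_{\theta}/r\|_{L^{\infty}}\le\|B/r\|_{L^{\infty}}$ already suffices, but if $B^{z}\ne 0$ the third component $\tfrac{B_{\theta}}{r}B^{z}$ must be treated as well (the same device, writing $B_{\theta}/r=-B^{1}/x_{2}$ and Taylor-expanding, applies). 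Second, the treatment of $T_{B}(B_{\theta}/r)$ and of the remainder is left at the level of a slogan (``can be recast\ldots''); the actual execution---either proving directly that $B_{\theta}/r\in\mathscr{B}^{3/p}_{p,1}$ with norm controlled by $\|B\|_{\mathscr{B}^{1+3/p}_{p,1}}$ via the Taylor integral and a scaling lemma, or arguing block by block through the commutator $[\Delta_{q},1/x_{2}]$ as in \eqref{m25}---is where the real work lies, and your outline does not carry it out. Consulting the cited proof in \cite{Hassainia} would clarify which route is taken there.
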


\subsection{Commutator estimates}In this subsection, we state three different commutator estimates. The first is classical and deals with the commutator with the dyadic block and advection operator. Its proof can be found in \cite[Proposition 5.4]{hk1}. 
\begin{prop}\label{prop:7.1}
 Let $u$ be a smooth function and $v$ be a smooth vector field in divergence-free of $\RR^3$ such that it's vorticity $ \omega \triangleq \cur v$ belongs to $L^\infty$. Then for every $p\in[1,\infty]$ and  $q \geq -1$  we have
\begin{equation*}
\|[\Delta_q,v\cdot \nabla]u\|_{L^{p}} \le  C \|u \|_{L^{p}}\big(\|\nabla \Delta_{-1} v \|_{ L^{\infty}} +(q+2) \|\omega \|_{L^{\infty}}\big).
\end{equation*}
\end{prop}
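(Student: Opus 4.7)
The plan is to start by exploiting the divergence-free condition: writing $v\cdot\nabla u = \partial_i(v^i u)$, we obtain the clean identity
\[
[\Delta_q, v\cdot\nabla]u \;=\; \partial_i [\Delta_q, v^i] u.
\]
With $\Delta_q = h_q\ast\cdot$ and $h_q = 2^{3q} h(2^q \cdot)$ for $q\ge 0$, a short computation that again uses $\Div v=0$ yields the explicit representation
\[
[\Delta_q, v\cdot\nabla]u(x) \;=\; \int_{\RR^3} (\partial_i h_q)(x-y)\,\bigl(v^i(y)-v^i(x)\bigr)\,u(y)\,dy,
\]
which after the change of variables $z=2^q(x-y)$ becomes
\[
[\Delta_q, v\cdot\nabla]u(x) \;=\; 2^q\!\int_{\RR^3} (\partial_i h)(z)\,\bigl(v^i(x-2^{-q}z)-v^i(x)\bigr)\,u(x-2^{-q}z)\,dz.
\]
The estimate will follow from a pointwise bound on the bracketed difference combined with Young's convolution inequality against $\partial_i h\in L^1$.

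To exploit the special structure of $v$, decompose $v = \Delta_{-1}v + \sum_{j\ge 0}\Delta_j v$ and treat the two pieces separately. For the low-frequency block, the mean-value theorem gives $|\Delta_{-1}v^i(x-2^{-q}z)-\Delta_{-1}v^i(x)|\le 2^{-q}|z|\,\|\nabla\Delta_{-1}v\|_{L^\infty}$; the prefactor $2^q$ then cancels, producing the contribution $\|\nabla\Delta_{-1}v\|_{L^\infty}\|u\|_{L^p}$. For each high-frequency block I would use two complementary bounds coming from Biot--Savart and Bernstein: $\|\nabla \Delta_j v\|_{L^\infty}\lesssim \|\Delta_j\omega\|_{L^\infty}\le \|\omega\|_{L^\infty}$ and $\|\Delta_j v\|_{L^\infty}\lesssim 2^{-j}\|\omega\|_{L^\infty}$, both valid for $j\ge 0$. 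Then
\[
|\Delta_j v^i(x-2^{-q}z)-\Delta_j v^i(x)| \;\le\; \min\!\bigl(2^{-q}|z|\,\|\nabla\Delta_j v\|_{L^\infty},\ 2\,\|\Delta_j v\|_{L^\infty}\bigr),
\]
and summing over $j\ge 0$ by using the Lipschitz bound for $0\le j\le q$ and the $L^\infty$ bound for $j>q$ gives
\[
\sum_{0\le j\le q}\! 2^{-q}|z|\,\|\omega\|_{L^\infty} + \sum_{j>q}\! 2^{-j}\|\omega\|_{L^\infty} \;\lesssim\; \bigl((q+2)|z|\,2^{-q} + 2^{-q}\bigr)\|\omega\|_{L^\infty}.
\]
Multiplied by the external $2^q$ and integrated against $(1+|z|)|\partial_i h(z)|\in L^1$, this produces the $(q+2)\|\omega\|_{L^\infty}\|u\|_{L^p}$ contribution. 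The exceptional case $q=-1$ is a simpler direct computation using that $\Delta_{-1}$ is convolution with a fixed Schwartz kernel; there the $(q+2)$ factor is just $1$.

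The only delicate point is the logarithmic $(q+2)$ factor, which cannot be avoided with $L^\infty$ control of $\omega$ alone, because $\sum_{0\le j\le q}\|\Delta_j\omega\|_{L^\infty}$ is bounded termwise by $\|\omega\|_{L^\infty}$ with no summability to spare. A mild secondary technicality is that Biot--Savart yields a good bound on $\nabla\Delta_j v$ only for $j\ge 0$, which is precisely why $\nabla\Delta_{-1}v$ must be isolated as a separate norm in the statement; the scheme above is organised so that the low-frequency block contributes only to the $\|\nabla\Delta_{-1}v\|_{L^\infty}$ term and never couples to the $(q+2)$ loss.
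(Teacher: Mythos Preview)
Your argument is correct. The paper does not actually prove this proposition: it simply records the statement and refers the reader to \cite[Proposition~5.4]{hk1} for the proof. Your approach---writing the commutator as a kernel integral via the divergence-free condition, splitting $v=\Delta_{-1}v+\sum_{j\ge0}\Delta_j v$, then using the mean-value bound on the low-frequency piece and the two Bernstein/Biot--Savart bounds $\|\nabla\Delta_j v\|_{L^\infty}\lesssim\|\omega\|_{L^\infty}$ and $\|\Delta_j v\|_{L^\infty}\lesssim 2^{-j}\|\omega\|_{L^\infty}$ on the high-frequency pieces---is precisely the classical route taken in the cited reference, and your handling of the $q=-1$ case is also correct.
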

The second result cares with the same commutator in $L^2-$space, taking into account the axisymmetric structure. For the detailed proof, we can refer \cite[Proposition 3.2]{HKR1}.

\begin{prop}\label{prop:2.1}
Let $v$ be an axisymmetric smooth vector field without swirl in divergence-free and let $u$ be a smooth scalar function. Then there exists $C >0$ such that for every $q \in  \NN \cup {-1}$  we have:
\begin{equation*}
\|[\Delta_q,v\cdot \nabla]u\|_{L^{2}} \le C  \|\frac{\omega}{r}\Big\|_{ L^{3,1}} \big( \|x_{h}u\|_{L^{6}}+\|u\|_{L^{2}}\big).
\end{equation*}
where $ \omega^\theta$ is the angular component of $\omega=\nabla \times v$.
\end{prop}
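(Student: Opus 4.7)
My plan is to represent the commutator via its convolution kernel. Let $h_q$ denote the Littlewood--Paley kernel so that $\Delta_q f = h_q\star f$ with $h_q(x) = 2^{3q} h(2^q x)$ for some $h\in\mathcal{S}(\RR^3)$. The incompressibility $\Div v = 0$ yields the standard representation
\begin{equation*}
[\Delta_q, v\cdot\nabla]u(x) = \int_{\RR^3} h_q(x-y)\,\big(v(y) - v(x)\big)\cdot\nabla u(y)\,dy.
\end{equation*}
A further integration by parts in $y$, again using $\Div v = 0$, transfers the derivative from $u$ onto the kernel and leads to
\begin{equation*}
[\Delta_q, v\cdot\nabla]u(x) = \int_{\RR^3} \nabla h_q(x-y)\cdot\big(v(y) - v(x)\big)\,u(y)\,dy.
\end{equation*}
Everything reduces to estimating this integral in $L^2(\RR^3)$.

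To bring in $\|\omega/r\|_{L^{3,1}}$ rather than $\|\nabla v\|_{L^\infty}$, I would exploit the axisymmetric-without-swirl structure $v = v^r\vec{e}_r + v^z\vec{e}_z$ together with the Shirota--Yanagisawa pointwise bound
\begin{equation*}
\Big|\frac{v^r}{|x_h|}\Big| \lesssim \frac{1}{|\cdot|^2}\star \Big|\frac{\omega_\theta}{r}\Big|,
\end{equation*}
which, via the convolution law $L^{3/2,\infty}\star L^{3,1}\hookrightarrow L^\infty$, gives $\|v^r/r\|_{L^\infty}\lesssim \|\omega/r\|_{L^{3,1}}$. Since $v^r$ vanishes on the symmetry axis, we may write $v^r(y) = |y_h|\,(v^r(y)/|y_h|)$, so the factor $|y_h|$ in the integrand is at scale $2^{-q}$ close to $|x_h|$ and can be transferred onto $u$, producing the weighted quantity $|x_h|u$ modulo localization by $\nabla h_q$. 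After Hölder, the $L^\infty$-control of $v^r/r$ and the $L^p$-bounds on $\nabla h_q$ produce the $\|x_h u\|_{L^6}$ contribution. The vertical component $v^z$ does not vanish on the axis, so it is treated as a remainder and, after a suitable smoothing estimate for $h_q$, is absorbed into the $\|u\|_{L^2}$ factor.

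The cases $q=-1$ and $q\ge 0$ should be handled separately: for $q=-1$ the operator $\Delta_{-1}$ is already a smoothing convolution, and the required bound follows directly from Bernstein and the $L^\infty$-control of $v/r$. For $q\ge 0$, the sharper decomposition above is needed. The main obstacle will be precisely avoiding the naive estimate $|v(y)-v(x)|\le |y-x|\|\nabla v\|_{L^\infty}$, which is unavailable because we only control $\omega/r$ in the Lorentz space $L^{3,1}$. The trick is to split the difference $v(y)-v(x)$ into its radial and vertical parts, peel off the geometric factor $|y_h|$ from $v^r$ and absorb it into the weighted norm $\|x_h u\|_{L^6}$, and then invoke Shirota--Yanagisawa to replace the resulting pointwise bound on $v^r/r$ by $\|\omega/r\|_{L^{3,1}}$. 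Once this careful splitting is carried out, the remainder follows by Hölder and Young-type convolution inequalities in the Lorentz scale, in the spirit of Hmidi--Keraani--Rousset.
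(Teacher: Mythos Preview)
The paper does not prove this proposition itself; it refers the reader to Proposition~3.2 of Hmidi--Keraani--Rousset \cite{HKR1}. Your sketch has the right overall philosophy --- kernel representation of the commutator, use of the Shirota--Yanagisawa bound $\|v^r/r\|_{L^\infty}\lesssim\|\omega/r\|_{L^{3,1}}$, and transfer of the weight $|x_h|$ onto $u$ --- but there is a real gap in the treatment of the vertical component.

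You claim the $v^z$ contribution ``is treated as a remainder and, after a suitable smoothing estimate for $h_q$, is absorbed into the $\|u\|_{L^2}$ factor.'' This step is not justified. Every piece of the commutator must ultimately carry the factor $\|\omega/r\|_{L^{3,1}}$, and for the vertical increment $v^z(y)-v^z(x)$ you supply no mechanism producing it. Smoothing of $h_q$ is irrelevant here: the obstacle is controlling the increment of $v^z$, not the kernel. There is no pointwise Lipschitz bound on $v^z$ in terms of $\omega_\theta/r$ alone --- note that $\partial_r v^z$ differs from $\partial_z v^r$ by $\omega_\theta$ itself, which is not dominated by $\omega_\theta/r$ for large $r$ --- so the naive mean-value argument is unavailable and your sketch offers no substitute. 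The $v^r$ part also needs more care than you indicate: after factoring $v^r(y)=|y_h|\,(v^r/|y_h|)$ you still carry $\nabla h_q\sim 2^q$ in $L^1$, and the observation that $|y_h|$ lies within $O(2^{-q})$ of $|x_h|$ does not by itself remove this growth, since you must simultaneously handle the subtracted term $v^r(x)\vec{e}_r(x)$ and the variation $\vec{e}_r(y)-\vec{e}_r(x)$ of the moving frame. The argument in \cite{HKR1} avoids this radial/vertical splitting altogether; the identity $\omega=(\omega_\theta/r)\,x_h^\perp$ is fed directly into the Biot--Savart representation of $v$, and the resulting linear weight is decomposed relative to the base point, which simultaneously accounts for both the $\|x_hu\|_{L^6}$ and the $\|u\|_{L^2}$ contributions without ever invoking Lipschitz control on $v^z$.
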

We finish this subsection by estimating the commutator in Besov space. More precise, we have. 
\begin{lem}\label{Lem:2.3} Let $u$ be a smooth function and $v$ be a smooth vector field of $\RR^3$ in divergence-free. Then for every $p\in [1,\infty]$ and $ s\in(-1,1)$, we have:
\begin{equation*}
\sum_{q\geq 0} 2^{qs} \|[\Delta_q,v\cdot \nabla]u\|_{L^{p}} \lesssim  \|\nabla u\|_{L^{\infty}} \| v\|_{\mathscr{B}^{s}_{p,1}} 
\end{equation*}
\end{lem}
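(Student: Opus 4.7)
The strategy is to apply Bony's paraproduct decomposition and exploit the divergence-free condition on $v$ to pull $\partial_j$ outside the commutator, which is essential in order to extract $\|\nabla u\|_{L^\infty}$ from Bernstein (rather than $\|\nabla v\|_{L^\infty}$, which would arise from the standard paraproduct commutator bound). Since $\Div v=0$, we first rewrite $v\cdot\nabla u=\partial_j(v^j u)$ and $v\cdot\nabla\Delta_q u=\partial_j(v^j\Delta_q u)$, so that
\[
[\Delta_q,v\cdot\nabla]u=\partial_j\big(\Delta_q(v^j u)-v^j\Delta_q u\big).
\]
Applying Bony's decomposition $v^j u=T_{v^j}u+T_u v^j+\mathscr{R}(v^j,u)$ (and the analogous one for $v^j\Delta_q u$), the bracket splits as $M_1^q+M_2^q+M_3^q$, where
\[
M_1^q=[\Delta_q,T_{v^j}]u,\quad M_2^q=\Delta_q T_u v^j-T_{\Delta_q u}v^j,\quad M_3^q=\Delta_q\mathscr{R}(u,v^j)-\mathscr{R}(v^j,\Delta_q u).
\]

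For $M_1^q=\sum_{|q-q'|\le 4}[\Delta_q,S_{q'-1}v^j]\Delta_{q'}u$, I combine the elementary identity $\|[\Delta_q,a]b\|_{L^p}\lesssim\|a\|_{L^p}\|b\|_{L^\infty}$ with Bernstein's inequality $\|\Delta_{q'}u\|_{L^\infty}\lesssim 2^{-q'}\|\nabla u\|_{L^\infty}$ to obtain $\|M_1^q\|_{L^p}\lesssim\|\nabla u\|_{L^\infty}\sum_{|q-q'|\le 4}2^{-q'}\|S_{q'-1}v\|_{L^p}$. For $M_2^q$, the constraint $S_{m-1}\Delta_q u\neq 0$ forces $m\ge q+2$ in $T_{\Delta_q u}v^j$, while in $\Delta_q T_u v^j$ only the dyadic blocks $|m-q|\le 4$ contribute; in both cases Bernstein applied to $\Delta_q u$ (or $\Delta_m u$) extracts a factor $2^{-q}\|\nabla u\|_{L^\infty}$, leaving norms $\|\Delta_m v\|_{L^p}$ for $m\gtrsim q$. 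The remainder $M_3^q$ is treated analogously, exploiting that $\mathscr{R}(v^j,\Delta_q u)$ involves only $m\in\{q-2,\dots,q+2\}$ and that $\Delta_q\mathscr{R}(u,v^j)$ restricts to $m\ge q-c$. Each piece is spectrally localized in a ball of radius $\sim 2^q$, so the outer $\partial_j$ costs at most a Bernstein factor $2^q$ which is absorbed by the above $2^{-q'}$ or $2^{-q}$.

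Finally, multiplying by $2^{qs}$, summing over $q\ge 0$ and applying Fubini, the double sums collapse to $\|\nabla u\|_{L^\infty}\sum_m 2^{ms}\|\Delta_m v\|_{L^p}=\|\nabla u\|_{L^\infty}\|v\|_{\mathscr{B}^s_{p,1}}$, provided the geometric series $\sum_{q\gtrsim m}2^{(q-m)s}$ (arising from $M_1^q$) converges, which requires $s<1$ after reindexing against $2^{-q'}\|S_{q'-1}v\|_{L^p}$, and that $\sum_{q\lesssim m}2^{(q-m)s}$ (arising from $M_2^q$, $M_3^q$) converges, which requires $s>-1$. Both conditions are secured by the hypothesis $s\in(-1,1)$. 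The main obstacle is precisely the term $M_1^q$: the classical commutator bound $\|[\Delta_q,S_{q'-1}v^j]\cdot\|_{L^p}\lesssim 2^{-q}\|\nabla v\|_{L^\infty}\|\cdot\|_{L^p}$ would produce a $\|\nabla v\|_{L^\infty}\|u\|_{\mathscr{B}^s_{p,1}}$ on the right-hand side, which is forbidden by the stated inequality; it is only the initial rewriting $v\cdot\nabla u=\partial_j(v^j u)$ using $\Div v=0$, combined with Bernstein on $\Delta_{q'}u$, that permits us to extract $\|\nabla u\|_{L^\infty}$ instead and conclude.
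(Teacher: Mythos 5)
The paper offers no proof of this lemma (it only cites Chemin's book), so your argument must stand on its own, and it does not: there are two concrete gaps, both sitting exactly where the claimed range $s\in(-1,1)$ is supposed to come from. First, for $M_1^q$ you use only the trivial bound $\|[\Delta_q,a]b\|_{L^p}\lesssim\|a\|_{L^p}\|b\|_{L^\infty}$. After paying the Bernstein factor $2^q$ for the outer $\partial_j$ (here the localization of $M_1^q$ in a ball of radius $C2^q$ is indeed correct), this yields $\|\partial_j M_1^q\|_{L^p}\lesssim\|\nabla u\|_{L^\infty}\sum_{j\le q+2}\|\Delta_j v\|_{L^p}$, and the weighted sum becomes $\sum_{j}\|\Delta_j v\|_{L^p}\sum_{q\ge j}2^{qs}$, which diverges for every $s\ge 0$ --- precisely the values $s=\frac3p-1\in[0,\frac12]$ for which the paper invokes the lemma. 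The series $\sum_{q\gtrsim m}2^{(q-m)s}$ you appeal to converges for $s<0$, not $s<1$. To reach $s<1$ you must use the genuine first-order commutator estimate $\|[\Delta_q,S_{q'-1}v^j]\Delta_{q'}u\|_{L^p}\lesssim 2^{-q}\|\nabla S_{q'-1}v\|_{L^p}\|\Delta_{q'}u\|_{L^\infty}$ (Taylor expansion of $S_{q'-1}v$ against the kernel of $\Delta_q$), which replaces $\|S_{q'-1}v\|_{L^p}$ by $2^{-q}\sum_{j\le q}2^{j}\|\Delta_j v\|_{L^p}$ and turns the inner sum into the convergent $\sum_{q\ge j}2^{(q-j)(s-1)}$.

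Second, the assertion that every piece is spectrally supported in a ball of radius $\sim 2^q$ fails for the term $T_{\Delta_q u}v^j$ inside $M_2^q$: its blocks $S_{m-1}(\Delta_q u)\Delta_m v^j$ live at frequencies $\sim 2^m$ for every $m\ge q+2$, so the outer $\partial_j$ costs $2^m$, not $2^q$. Even if one instead pushes $\partial_j$ through using $\Div v=0$ a second time, the resulting bound is $\|\nabla u\|_{L^\infty}\sum_{m\ge q}\|\Delta_m v\|_{L^p}$, and $\sum_{q\ge 0}2^{qs}\sum_{m\ge q}\|\Delta_m v\|_{L^p}\lesssim\|v\|_{\mathscr{B}^s_{p,1}}$ requires $s>0$, not $s>-1$. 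This is not a defect of the method but of the statement: take $u$ spectrally supported near a single low frequency $2^{q_0}$ with $\|\nabla u\|_{L^\infty}\sim1$ and $v$ divergence-free, supported near a single high frequency $2^{M}$ with $M\gg q_0$; then $\Delta_{q_0}(v\cdot\nabla u)=0$, so $[\Delta_{q_0},v\cdot\nabla]u=-v\cdot\nabla u$ and the left-hand side is at least $2^{q_0 s}\|v\cdot\nabla u\|_{L^p}\approx 2^{q_0 s}\|v\|_{L^p}\|\nabla u\|_{L^\infty}$, while the right-hand side is $2^{Ms}\|v\|_{L^p}\|\nabla u\|_{L^\infty}$; for $s<0$ the ratio blows up as $M\to\infty$. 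So no proof can cover all of $(-1,1)$ with the norms distributed as stated; the correct general statement is the two-term estimate $\|\nabla v\|_{L^\infty}\|u\|_{\mathscr{B}^s_{p,1}}+\|\nabla u\|_{L^\infty}\|v\|_{\mathscr{B}^s_{p,1}}$ of Chemin and Bahouri--Chemin--Danchin, and the one-term version is only accessible for $s\in(0,1)$. You should either prove that restricted version (fixing the $M_1^q$ step as above) or revert to the standard two-term lemma.
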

The proof can be found in \cite{che1}.

\end{document}